\documentclass[leqno,11pt]{amsart}

\usepackage[letterpaper,margin=1in]{geometry}
\usepackage{arydshln}
\usepackage[all]{xy} 
\usepackage{amsmath, amssymb, amsfonts, latexsym, mdwlist, amsthm, amscd,mathabx,braket}
\usepackage{subfig}
\usepackage{graphicx}
\usepackage{wrapfig}
\usepackage{etex}
\usepackage{tikz-cd}
\usepackage[colorlinks=true, citecolor=blue, urlcolor=blue, linkcolor=blue, pagebackref]{hyperref}

\usepackage{tikz}
\usetikzlibrary{calc,trees,positioning,arrows,chains,shapes.geometric,%
    decorations.pathreplacing,decorations.pathmorphing,shapes,%
    matrix,shapes.symbols}

\tikzset{
>=stealth',
  punktchain/.style={
    rectangle,
    rounded corners,
    draw=black, thick,
    minimum height=3em,
    text centered,
    on chain},
  line/.style={draw, thick, <-},
  eLement/.style={
    tape,
    top color=white,
    bottom color=blue!50!black!60!,
    minimum width=8em,
    draw=blue!40!black!90, very thick,
    text width=10em,
    minimum height=3.5em,
    text centered,
    on chain},
  every join/.style={->, thick,shorten >=1pt},
  decoration={brace},
  tuborg/.style={decorate},
  tubnode/.style={midway, right=2pt},
}

\usepackage{paralist}
\usepackage{enumitem} 
\usepackage{dsfont}
\setdefaultenum{(1)}{(a)}{(i)}{}
\setlist[enumerate,1]{label={\upshape(\arabic*)}}
\setlist[enumerate,2]{label={\upshape(\alph*)},ref=\theenumi\upshape(\alph*)}
\setlist[enumerate,3]{label={\upshape(\roman*)},ref=\theenumi\theenumii\upshape(\roman*)}
\usepackage[capitalize]{cleveref}
\crefname{Prop}{Proposition}{Propositions}
\crefname{Thm}{Theorem}{Theorems}
\crefname{Lem}{Lemma}{Lemmas}
\crefname{enumi}{Case}{Cases}

\def\dim{\mathop{\mathrm{dim}}\nolimits}

\def\MG13{\ensuremath{{\mathcal M}_{\Gamma_1(3)}}}
\def\tildeMG13{\ensuremath{\widetilde{\mathcal M}_{\Gamma_1(3)}}}

\newcommand\TFILTB[3]{%
\xymatrix@=1pc{
{0 = {#1}_0} \ar[rr]&&
{{#1}_1} \ar[rr]\ar[ld] &&
{{#1}_2} \ar[r]\ar[ld] &
{\cdots} \ar[r] & { {#1}_{#3-1}} \ar[rr] &&
{{#1}_{#3} = {#1}} \ar[ld]
\\
& *{{#2}_1} \ar@{.>}[ul] &&
{{#2}_2} \ar@{.>}[ul] & &&&
{{#2}_{{#3}}} \ar@{.>}[ul]
}}

\makeatletter
\newtheorem*{rep@theorem}{\rep@title}
\newcommand{\newreptheorem}[2]{%
\newenvironment{rep#1}[1]{%
 \def\rep@title{#2 \ref{##1}}%
 \begin{rep@theorem}}%
 {\end{rep@theorem}}}
\makeatother

\newtheorem{Thm}{Theorem}[section]
\newreptheorem{Thm}{Theorem}
\newtheorem{Prop}[Thm]{Proposition}

\newtheorem{Lem}[Thm]{Lemma}

\newtheorem{Cor}[Thm]{Corollary}
\newreptheorem{Cor}{Corollary}

\newreptheorem{Con}{Conjecture}

\newtheorem*{theorem*}{Theorem}
\newtheorem*{lemma*}{Lemma}
\newtheorem*{proposition*}{Proposition}
\newtheorem*{conjecture*}{Conjecture}
\newtheorem*{corollary*}{Corollary}
\newtheorem*{problem*}{Problem}

\newtheorem{Thm-int}{Theorem}

\theoremstyle{definition}
\newtheorem{Def-s}[Thm]{Definition}
\newtheorem{Def}[Thm]{Definition}
\newtheorem{Rem}[Thm]{Remark}

\newtheorem{Ex}[Thm]{Example}

\newcommand{\ignore}[1]{}
\begin{document}

\title{Hodge rigidity of Chern classes}
\author{Yuxiang Liu ${}^{1}$, Artan Sheshmani${}^{1,2,3}$ and  Shing-Tung Yau$^{2,4}$}

\address{${}^1$ Beijing Institute of Mathematical Sciences and Applications, No. 544, Hefangkou Village, Huaibei Town, Huairou District, Beijing 101408}

\address{${}^2$  Massachusetts Institute of Technology, IAiFi Institute, 77 Massachusetts Ave, 26-555. Cambridge, MA 02139, artan@mit.edu}

\address{${}^3$ National Research University Higher School of Economics, Russian Federation, Laboratory of Mirror Symmetry, NRU HSE, 6 Usacheva str.,Moscow, Russia, 119048}
\address{${}^4$ Yau Mathematical Sciences Center, Tsinghua University, Haidian District, Beijing, China}


\begin{abstract}
In this paper, we study the homogeneous components of the Chern--Schwartz--MacPherson (CSM) classes of Schubert cells. We prove that, under suitable conditions, each such component is represented by an irreducible subvariety. In particular, our result extends Huh's result \cite{Huh} by relaxing the regularity assumption on log resolutions. As a consequence, the conclusion holds for all cominuscule Schubert cells of classical type and for a large family of exceptional cases. We also obtain analogous results for certain Schubert varieties in symplectic Grassmannians and flag varieties.
    
\end{abstract}

\maketitle
\tableofcontents

\section{Introduction}
In algebraic geometry, it is natural to ask whether a given homology class can be realized by a geometrically meaningful subvariety. The Hodge conjecture addresses this question at the level of algebraic cycles, asking whether certain cohomology classes arise from algebraic subvarieties. In this paper, we consider a more restrictive representability problem for classes naturally associated with Schubert cells: whether each homogeneous component of a Chern--Schwartz--MacPherson (CSM) class is represented by an irreducible subvariety.

The CSM classes were conjectured by Grothendieck and Deligne and constructed independently by Schwartz and MacPherson. They extend the total Chern class to singular varieties, and have become an important tool in singularity theory, characteristic classes, and enumerative geometry. In particular, CSM classes can be associated with all Schubert varieties and Schubert cells. In the setting of Schubert varieties and Schubert cells, they encode subtle geometric information while retaining strong positivity and functoriality properties. This makes them a natural source of effective cycle classes whose geometric representatives are of independent interest.

Let $G$ be a semi-simple algebraic group over an algebraically closed field $k$. Fix a maximal torus $T$ and a Borel subgroup $B\supset T$. Let $P$ be a parabolic subgroup containing $B$. Let $X=\Sigma_w$, where $w\in W^P$, be a Schubert variety in $G/P$, and let $X^\circ$ denote the corresponding Schubert cell. Then
$$c_{SM}(X^\circ)=\sum\limits_{w'\leq w}\gamma_{w,w'}\sigma_{w'}\in A_*(X),$$
where $\sigma_{w'}$ are Schubert classes and $\gamma_{w,w'}\in\mathbb{Z}$ are uniquely determined coefficients. These coefficients can be computed recursively using Demazure-Lusztig type operators. Aluffi and Mihalcea obtained an explicit formula for Schubert cells in Grassmannians in \cite{AM}, and later gave an algorithm for the general homogeneous variety $G/P$ in \cite{AM2}.

Huh \cite{Huh} proved that if there exists a $B$-equivariant log-resolution $\pi:Y\rightarrow X$ such that $Y$ has finitely many $B$-orbits and, for every $y\in Y$, the isotropy Lie algebra $\mathfrak{b}_y$ contains a regular element of $\mathfrak{b}$, then each homogeneous component of $c_{SM}(X^\circ)$ equals the fundamental class of a nonempty subvariety of $X$. In particular, this holds for all Schubert varieties in Grassmannians, where Bott-Samelson varieties are isomorphic to Schubert varieties in suitable flag varieties. Our first result shows that the regularity assumption in Huh's criterion can be relaxed.
\begin{Def}
A homology class $\sigma\in A_*(X)$ is called {\em Hodge rigid} if it can be represented by an irreducible subvariety of $X$. 
\end{Def}
\begin{Thm}
Let $X$ be a Schubert variety in a homogeneous variety $G/P$, and let $X^\circ$ be the corresponding Schubert cell. Assume that there exists a $B$-equivariant log-resolution $\pi:Y\rightarrow X$ such that $Y$ has only finitely many $B$-orbits. Then each homogeneous component of $c_{SM}(X^\circ)$ is Hodge rigid.
\end{Thm}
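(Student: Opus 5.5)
The plan follows Huh's strategy, but replaces the regularity hypothesis with a direct analysis of the $\mathfrak{b}$-action on logarithmic vector fields. Let $D=\pi^{-1}(X\setminus X^\circ)$, a simple normal crossing divisor. Since $\pi$ is an isomorphism over $X^\circ$, Aluffi's formula for CSM classes of complements of SNC divisors together with functoriality gives
$$c_{SM}(X^\circ)=\pi_*\bigl(c(T_Y(-\log D))\cap[Y]\bigr).$$
So it suffices to show two things for each $i$: that $c_i(T_Y(-\log D))\cap[Y]$ is represented by an irreducible subvariety $Z_i\subset Y$, and that $\pi_*[Z_i]$ is again the class of an irreducible subvariety.

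\textbf{Step 1: global generation of $T_Y(-\log D)$.} The $B$-action induces a map of sheaves $\mathfrak{b}\otimes\mathcal{O}_Y\to T_Y$. Because $D$ is $B$-stable, this map factors through $T_Y(-\log D)$. I would show it is surjective onto $T_Y(-\log D)$ using only finiteness of $B$-orbits, as follows. Each $B$-orbit $O$ is open in a stratum $D_J^\circ=\bigcap_{j\in J}D_j\setminus\bigcup_{j\notin J}D_j$, since there are finitely many orbits and the strata are $B$-stable. Hence the orbit map $\mathfrak{b}\to T_yO=T_yD_J$ is onto. The fiber $T_Y(-\log D)|_y$ is an extension of $T_yD_J$ by the trivial rank-$|J|$ piece spanned by the Euler fields $z_j\partial_{z_j}$. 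The image of $\mathfrak{b}$ hits this piece exactly through the action of the stabilizer $\mathfrak{b}_y$ on the normal lines $N_{D_j}|_y$. To see that these $|J|$ characters are linearly independent, I would use the finiteness of orbits in a neighborhood: if they were dependent, a one-parameter family of orbits would appear in the normal slice $\prod_j N_{D_j}|_y$, whose open torus $\prod_j (N_{D_j}|_y\setminus 0)$ lies in the open orbit. This is precisely where Huh invoked a regular element, and I expect it to be the main obstacle. What I would try first is to reduce to the torus $T_y\subset B_y$, a maximal torus of the stabilizer, and show that finitely many orbits near $y$ force the weights of $T_y$ on the normal slice to be independent.

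\textbf{Step 2: irreducible degeneracy loci.} Let $E=T_Y(-\log D)$, of rank $n=\dim Y$, globally generated by the finite-dimensional space $\mathfrak{b}$. Then $c_i(E)\cap[Y]$ is the class of the degeneracy locus $Z_i(V)=\{y:\ V\otimes\mathcal{O}\to E \text{ drops rank at } y\}$ for a general $V\subset\mathfrak{b}$ of dimension $n-i+1$. By Kleiman--Bertini this locus has the expected codimension $i$, is reduced, and is empty if the class is zero; empty is handled by convention. For irreducibility I would form the incidence variety
$$\mathcal{Z}=\{(y,V):\ y\in Y,\ V\in \mathrm{Gr}(n-i+1,\mathfrak{b}),\ V\to E_y \text{ not surjective}\}.$$
Its projection to $Y$ is a fiber bundle whose fibers are irreducible Schubert-type determinantal loci in the Grassmannian, so $\mathcal{Z}$ is irreducible. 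The general fiber of $\mathcal{Z}\to\mathrm{Gr}$ is $Z_i(V)$. To get irreducibility of this general fiber, I would check that the fiber is connected, via Stein factorization and a Fulton--Hansen/Lazarsfeld-type connectedness for degeneracy loci of globally generated bundles with $i<n$. I would then combine connectedness with normality or smoothness in codimension one of the general fiber. The top case $i=n$ is treated separately: it is a finite set of points, namely $B$-fixed-type zeros of a general vector field in $\mathfrak{b}$, and its pushforward is a positive multiple of the point class, which is Hodge rigid provided we argue its support is a single point, since $X$ has a unique $B$-fixed point.

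\textbf{Step 3: pushing forward.} Finally, $\pi_*[Z_i]=d\,[\pi(Z_i)]$ with $\pi(Z_i)$ irreducible. I need $d\in\{0,1\}$. Because $Z_i$ is a general degeneracy locus and $\pi$ is birational and $B$-equivariant, I would show that a general $Z_i$ meets the open set where $\pi$ is an isomorphism whenever $\dim\pi(Z_i)=\dim Z_i$, which gives $d=1$. This uses again that $\mathfrak{b}$-general sections move $Z_i$ off any proper closed $B$-stable subset of expected-dimension failure. If instead the image drops dimension, the pushforward is $0$.
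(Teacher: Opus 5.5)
Your proposal has a genuine gap at Step~1, and Steps~2 and~3 depend on it.

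\textbf{Step 1 is false.} The assertion that each $B$-orbit is open in a stratum $D_J^\circ$ does not follow from finiteness of orbits. A stratum is a finite union of orbits, but only the dense ones are open in it. In fact $\mathfrak{b}$ does not generate $T_Y(-\log D)$ in general. Take $X=Y=\mathbb{P}^2=SL_3/P$, $\pi=\mathrm{id}$, $D=\mathbb{P}(F_2)$ (the complement of the open $B$-orbit), and $y=[e_1]$.
\begin{itemize}
\item Since $y$ is $B$-fixed, every vector field coming from $\mathfrak b$ vanishes in $T_yY$.
\item So its value in $T_Y(-\log D)_y$ lies in the kernel of $T_Y(-\log D)_y\to T_yY$. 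That kernel is the line spanned by $v\partial_v$, with $v$ a local equation of $D$.
\item Hence the image of $\mathfrak b$ is $1$-dimensional, while the rank is $2$.
\end{itemize}
The same failure occurs at any $B$-fixed point lying on fewer than $\dim Y$ components of $D$. It also occurs along orbits where $\mathfrak b_y$ acts trivially on the normal lines.

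\textbf{Consequences for Steps 2 and 3.} Kleiman--Bertini transversality is therefore unavailable. For general $\Lambda$ the degeneracy locus $\mathfrak D_k(\Lambda)$ can have irreducible components contained in $D$, and these contribute with multiplicity to $c_{SM}(Y^\circ)_k$. This is exactly what Theorem~\ref{summary of Huh} records: extra components lie on orbits $S\subset D$ with $\mathfrak X_S=\mathfrak Y_S$ and $\operatorname{rank}\operatorname{Stab}_B(y)<\operatorname{rank}B$. Your Step~3 claim that general degeneracy loci avoid such sets is unfounded. Nothing in your proposal shows these extra components have zero pushforward, and that is the actual content of the theorem.

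\textbf{What the paper does instead.} It proves $\dim\pi(D_k(\Lambda,S))<\dim D_k(\Lambda,S)$ by exhibiting, for general $x$, a positive-dimensional subset of $\pi^{-1}(x)\cap D_k(\Lambda,S)$. There are two cases.
\begin{itemize}
\item If the normalizer $N$ of $\mathfrak b_y$ in $\operatorname{Stab}_B(x)$ is large, the subset is $N\cdot y$.
\item Otherwise an incidence dimension count lets one take $\Lambda\cap\mathfrak b_y$ to contain a regular element $\xi$ of $\mathfrak b$. The subset is then $T_x\cdot y$, where $T_x\subset\operatorname{Stab}_B(x)$ is a maximal torus centralizing $\xi$. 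This uses that every point of $G/P$ is fixed by a maximal torus of $B$, together with the rank inequality.
\end{itemize}

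\textbf{Step 2 fails independently.} Even if generation held, your route to irreducibility would not work. Connectedness theorems of Fulton--Lazarsfeld type need ampleness, not global generation. For example, take $E=p_1^*\mathcal O_{\mathbb P^1}(2)\oplus\mathcal O$ on $\mathbb P^1\times\mathbb P^1$: the locus where two general sections become dependent is two disjoint fibres. The irreducibility of the main component comes from a group-theoretic input instead. For $y\in Y^\circ\cong X^\circ$ the isotropy algebra contains a conjugate of $\mathfrak t$. So the incidence variety over $Y^\circ$ dominates $\mathbb P(\mathfrak b)$, and Bertini's theorem for preimages of general linear subspaces applies. The paper imports this from Huh's result.
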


As an application of this result, we use suitable resolutions of singularities to show that the homogeneous components of the CSM classes of certain Schubert cells are represented by irreducible subvarieties.

\subsection{Cominuscule Schubert varieties}A cominuscule homogeneous variety is one of the following:
\begin{itemize}
    \item Type $A$: Grassmannians;
    \item Type $B$: Smooth quadric hypersurfaces;
    \item Type $C$: Lagrangian Grassmannian;
    \item Type $D$: Spinor varieties;
    \item Two exceptional cases (Type $E$): the Cayley plane and the Freudenthal
variety.
\end{itemize}
We prove that every Schubert variety in a classical cominuscule homogeneous variety admits a finite log resolution. We also verify this for most Schubert varieties in the exceptional cominuscule varieties.

\begin{Thm}
    Let $X$ be a Schubert variety of a cominuscule homogeneous variety of classical types A,B,C, or D. Then each homogeneous component of $c_{SM}(X^\circ)$ can be represented by an irreducible subvariety.
\end{Thm}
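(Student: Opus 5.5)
By the first theorem above, it suffices to construct, for every Schubert variety $X$ in a classical cominuscule $G/P$, a $B$-equivariant log resolution $\pi:Y\to X$ such that $Y$ has only finitely many $B$-orbits. I will argue type by type, using explicit resolutions of Bott--Samelson or Zelevinsky type that are themselves built from homogeneous pieces.

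\emph{Types A and B.} For Grassmannians the claim is already known. A Zelevinsky-type resolution of a Schubert variety is a tower of Grassmannian bundles, and it is isomorphic to a Schubert variety in a partial flag variety. Hence it is smooth, carries finitely many $B$-orbits (its Schubert cells), and the exceptional locus is a union of Schubert divisors meeting with simple normal crossings. For smooth quadrics $Q^{2n-1}$ the Schubert varieties are linear subspaces and quadric cones (singular linear sections). A linear subspace is already smooth. A cone $Q\cap\langle e_1,\dots,e_k\rangle^{\perp}$ is resolved by blowing up its vertex, or equivalently by the isotropic-flag incidence variety $\{(L,x)\}$, which is a homogeneous bundle over an orthogonal flag variety. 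The exceptional divisor is smooth, and $B$-orbits are finite because everything is a $B$-stable subvariety of a partial flag variety $G/Q$.

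\emph{Types C and D.} For the Lagrangian Grassmannian $LG(n,2n)$ and the spinor variety $OG(n,2n)$, Schubert varieties are indexed by strict partitions and defined by incidence conditions $\dim(V\cap F_{i})\ge k_i$ with a fixed isotropic flag. I will use the isotropic analogue of the Zelevinsky resolution: the variety of chains $V_{k_1}\subset V_{k_2}\subset\cdots\subset V$ with $V_{k_j}\subset F_{i_j}$, where each $V_{k_j}$ is isotropic. It is an iterated bundle of Grassmannians or isotropic Grassmannians, hence smooth. It is a $B$-stable closed subvariety of $X\times G/Q$ for a parabolic $Q$, and it projects isomorphically onto a Schubert variety of $G/(P\cap Q)$. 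Therefore it has finitely many $B$-orbits. The map to $X$ is birational on the open cell.

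\emph{Main obstacle.} The hard step is to verify that the exceptional locus is a simple normal crossing divisor. The complement of the open cell in $Y$ is a union of $B$-stable boundary divisors, namely the Schubert divisors $D_j$ where a rank condition jumps. I must show two things about them: each intersection $D_{j_1}\cap\cdots\cap D_{j_r}$ is smooth of the expected codimension, and $\pi$ is an isomorphism exactly off $\bigcup D_j$. I would prove both locally, by writing the iterated bundle in affine charts adapted to $T$-fixed points. In those charts each $D_j$ becomes a coordinate hyperplane, as in the Bott--Samelson case.

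In the isotropic case there is an extra complication: the condition $V\cap F_i$ can interact with $F_i^\perp$, which may make some divisors non-transverse. If that happens, I would first refine the chain of subspaces so that each step is a $\mathbb{P}^1$- or projective-space bundle, passing to a Bott--Samelson-type variety. That variety is a $B$-equivariant log resolution with finitely many $B$-orbits, because the boundary is given by the standard coordinate divisors. This refinement is what makes the classical cases uniform and is where the exceptional types fail in general.
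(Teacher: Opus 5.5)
Your quadric step fails, and the cause is a misreading of condition (3) of Definition~\ref{finiteresolution}. That condition requires the \emph{entire} complement $Y\setminus\pi^{-1}(X^\circ)$ to be SNC, and this complement contains the strict transforms of the $B$-stable divisors of $X$, not just the exceptional locus. (Your requirement that $\pi$ be an isomorphism ``exactly off $\bigcup D_j$'' already fails whenever $X$ is smooth and $Y=X$.)

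Concretely, take $X=Q\cap\mathbb{P}(F_b^\perp)$. The blow-up of the vertex is a $\mathbb{P}^b$-bundle $\rho:Y\to Q'$ over the smooth quadric $Q'\subset\mathbb{P}(F_b^\perp/F_b)$. If $\dim Q'\ge 3$, then $\partial X=Q\cap\mathbb{P}(F_{b+1}^\perp)$ and $\pi^{-1}(\partial X)=E\cup\rho^{-1}(Q'\cap T_pQ')$, where $p=[F_{b+1}/F_b]$ is the $B$-fixed point of $Q'$. The tangent hyperplane section $Q'\cap T_pQ'$ is a quadric cone singular at $p$, so the boundary is not SNC. For the same reason the identity of $Q$ is not a finite log resolution, since $\partial Q=Q\cap T_{[F_1]}Q$ is a singular cone when $\dim Q\ge 3$. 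A further blow-up over $p$ might repair this, but it would need its own SNC and orbit-finiteness check. You also omit the even-dimensional quadrics $D_n/P_1$, which are cominuscule of type D.

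The paper does not resolve quadric Schubert varieties at all. Outside the middle degree each degree carries a single Schubert class, so $c_{SM}(\Sigma^\circ)_k=m\sigma$ with $m\ge 1$, and Coskun--Robles flexibility (Lemma~\ref{flex}) gives an irreducible representative. Degree $0$ follows from $\chi(\Sigma^\circ)=1$ and the top degree is trivial. The middle degree for even $n$ uses the involution exchanging the two families of maximal isotropic subspaces, together with a degree-$m$ hypersurface section of $Q\cap\mathbb{P}(F_{n/2-2}^\perp)$.

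For types C and D, SNC is deferred rather than proved, and the fallback does not close the gap. ``The boundary is given by the standard coordinate divisors'' is an SNC statement; it says nothing about finiteness of $B$-orbits. Once you refine beyond a Schubert variety of a partial flag variety, the Bruhat decomposition no longer applies. Bott--Samelson varieties are not spherical in general, which is exactly the hypothesis of the corollary following Theorem~\ref{hodgerigid}. Any refinement must also remain an isomorphism over $X^\circ$.

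The paper instead uses the tower $\Lambda_1\subset\cdots\subset\Lambda_s\subset\Lambda$ with $\Lambda_t\subset F_{a_t}$ in $SF(1,\dots,s,k;n)$ (resp.\ $OF$). Its first $s$ steps are projective bundles whose boundary pieces $\{\Lambda_t\subset F_{a_t-1}\}$ are hyperplane subbundles, and orbit-finiteness comes from the Bruhat decomposition. Using projective rather than Grassmannian steps is also what makes the type A tower in $F(1,\dots,k;n)$ work. A fibre $G(r,m)$ with $2\le r\le m-2$ has a singular Schubert divisor, so your SNC claim for Zelevinsky-type towers (and for your unrefined isotropic tower) is inaccurate, although citing Huh suffices for type A.

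The SNC obstacle you flag for the isotropic step is nonetheless real. The last step of the paper's tower is an $SG(k-s,n-2s)$-bundle (resp.\ an orthogonal one). The relative Schubert divisor $\{\Lambda\cap F_k\supsetneq\Lambda_s\}$ lies in $\pi^{-1}(\partial\Sigma_{a;b})$ but not in $\pi_{\le s}^{-1}(D_s)$, and it is singular once $k-s\ge 2$ in the Lagrangian case. For example, with $s=0$ the paper takes $\pi=\mathrm{id}$ on $SG(k,2k)$. What is missing, in your sketch and also in the paper's written induction, is a $B$-equivariant refinement of that step that stays an isomorphism over $X^\circ$, has finitely many $B$-orbits, and has SNC boundary.
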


\begin{Thm}
    Let $X=X(w)$ be a Schubert variety in $E_6/P_6$ with $w\leq (24315423456)$, or in $E_7/P_7$ with $w\leq (243154234567)$. Then there exists a finite log-resolution $\pi:Y\rightarrow X(w)$. In particular, each homogeneous component of $c_{SM}(X^\circ)$ is Hodge rigid.
\end{Thm}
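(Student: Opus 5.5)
The plan is to deduce the statement from the first theorem (the relaxed Huh criterion). That theorem needs a $B$-equivariant log-resolution $\pi:Y\to X(w)$ such that $Y$ has only finitely many $B$-orbits. Call such a resolution "finite". So everything reduces to constructing a finite log-resolution of $X(w)$ for every $w$ below the stated bounds in $E_6/P_6$ and $E_7/P_7$. The Hodge rigidity then follows immediately from the first theorem.

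\textbf{Reduction to the maximal elements.} I would use Bott--Samelson type towers. Write a reduced word $w=s_{i_1}\cdots s_{i_\ell}$ and form
\[
Z=P_{i_1}\times^B P_{i_2}\times^B\cdots\times^B P_{i_\ell}/B \longrightarrow G/P .
\]
A plain Bott--Samelson variety usually has infinitely many $B$-orbits, so I would instead use partial versions. These are iterated bundles $P_{J_1}\times^{B}\cdots\times^{B} P_{J_r}/P_{J_r'}$ whose fibres are Schubert varieties in smaller flag varieties (Grassmannian or quadric type) that are already smooth, or are handled by the classical cominuscule cases. Because the words $24315423456$ and $243154234567$ are explicit, the Schubert varieties below them form a finite poset inside the minuscule poset of $E_6/P_6$ (resp.\ $E_7/P_7$). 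These elements are exactly the ones whose associated diagram avoids the "branching" portion of the poset where the $D$-type fork creates infinitely many orbits. I would proceed in three steps:
\begin{enumerate}
\item enumerate these $w$ via the Hasse diagram of the minuscule poset;
\item group them into families that share the same tower shape;
\item for each family, exhibit a tower $Y_w=P_{J_1}\times^B\cdots\times^B \overline{P_{J_r}w_r P}/P\to X(w)$ that is birational.
\end{enumerate}
Birationality follows from the product of the $w_j$ being reduced and equal to $w$.

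\textbf{Log-resolution property.} The boundary $Y_w\setminus \pi^{-1}(X^\circ)$ should be a union of the sub-towers obtained by dropping a factor. In each factor, the boundary of a smooth Schubert variety in a cominuscule (or smooth) homogeneous space is the relevant divisor. Where that divisor is not simple normal crossings, I would refine the factor further into rank-one steps only at that position. Local products of smooth varieties with transversal divisors give simple normal crossings.

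\textbf{Finiteness of $B$-orbits.} The $B$-orbits on $Y_w$ fibre over $B$-orbits of the first factor $P_{J_1}/B$ or Schubert cell. Finiteness then reduces inductively to the following claim: the stabiliser $B\cap {}^{u}B$-type groups acting on each subsequent fibre have finitely many orbits. This holds when each fibre is a spherical-type homogeneous piece for a Borel of a Levi of type $A$ or a smaller cominuscule, which is the case in the classical setting treated earlier.

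\textbf{Main obstacle.} This last step is where I expect the difficulty. The issue is ensuring that the stabilisers, which are not Borels of the Levi factors, still act with finitely many orbits. It is precisely here that the bound on $w$ is needed: beyond $24315423456$ a fibre of type $D_4$-fork appears on which the stabiliser has a continuous family of orbits. The first thing I would try is an explicit check via root subgroups. I would show that each fibre is a $\mathbb P^1$ or a small Grassmannian on which the image of the stabiliser contains a Borel of the fibre's automorphism Levi. I would verify this case by case for the finitely many maximal elements, and let the smaller $w$ inherit it by restricting the towers.
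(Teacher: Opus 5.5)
\textbf{There is a genuine gap.} Your reduction is fine: once a finite log-resolution exists, Theorem~\ref{hodgerigid} gives Hodge rigidity. The gap is in constructing the resolution, exactly at the step you flag as the obstacle. Nothing in the proposal shows that the towers $Y_w=P_{J_1}\times^B\cdots\times^B\overline{P_{J_r}w_rP}/P$ have finitely many $B$-orbits.

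\begin{itemize}
\item These generalized Bott--Samelson towers are not Schubert varieties in any $G/Q$, so the Bruhat decomposition does not apply. Finiteness of $B$-orbits on them is a genuine sphericity-type condition; this is why the corollary after Theorem~\ref{hodgerigid} has to assume $BS_{\bar w}$ is spherical.
\item Your root-subgroup check, that the stabilizers contain a Borel of each fibre's Levi, is only announced. No reason is given for it to hold.
\item Your fix for a non-SNC boundary, refining a factor into rank-one steps, pushes the tower toward a plain Bott--Samelson variety. You yourself note these usually have infinitely many $B$-orbits. So the SNC requirement and the finiteness requirement pull against each other, and nothing reconciles them.
\item Your explanation of the bound on $w$ through a ``$D_4$-fork'' fibre with continuous orbit families is unsupported.
\end{itemize}

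Falling back on the classical cominuscule cases is the right instinct, but the mechanism is geometric, not a tower. The bound $w\le 24315423456$ is exactly the range of Schubert varieties swept out by lines through the $B$-fixed point $x\in E_6/P_6$. Use the Tits correspondence $\tau:E_6/(P_5\cap P_6)\to E_6/P_5$, $\eta:E_6/(P_5\cap P_6)\to E_6/P_6$.
\begin{itemize}
\item Every such $X_{P_6}(w)$ with $w\neq e$ equals $\mathcal T(\Sigma)=\eta(\tau^{-1}(\Sigma))$, a cone over a Schubert variety $\Sigma$ of the spinor variety $OG(5,10)\subset E_6/P_5$.
\item Proposition~\ref{spinor} already gives a finite log-resolution $\Gamma\to\Sigma$, with $\Gamma$ a smooth Schubert variety in $OF(1,\dots,s,5;10)\hookrightarrow E_6/P_I$.
\item Set $Y:=\Gamma\times_\Sigma\tau^{-1}(\Sigma)$. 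It is a smooth Schubert variety in $E_6/P_{I\cup\{5,6\}}$ and maps to $X_{P_6}(w)$ by the restriction of the natural projection. Finiteness of $B$-orbits then comes free from the Bruhat decomposition.
\item The map is an isomorphism over the cell, because $\tau^{-1}(\Sigma)\to\mathcal T(\Sigma)$ and $\Gamma\to\Sigma$ are isomorphisms over the respective cells.
\item The boundary is SNC. It consists of the pullback of the SNC boundary of $\Gamma$ along the $\mathbb P^1$-bundle $Y\to\Gamma$, together with the section lying over the cone vertex, which is transverse to that pullback.
\end{itemize}

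The $E_7$ case is the same argument one level up. For $w\le 243154234567$, $X_{P_7}(w)$ is the Tits transform of a Schubert variety of $E_6/P_6\subset E_7/P_6$ lying in the range just treated, and you lift that resolution the same way.

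The lesson for your approach: choose $Y$ to be a smooth Schubert variety in some $G/Q$ with $Q\subset P$. Then condition (1) of Definition~\ref{finiteresolution} costs nothing.
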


\subsection{Symplectic Grassmannians and partial flag varieties}
Our method also applies to several non-cominuscule cases. In these situations, suitable resolutions of singularities again allow us to show that the homogeneous components of CSM classes are represented by irreducible subvarieties.
\begin{Prop}
    Let $\Sigma_{a^\alpha}(F_\bullet)$ be a Schubert variety in $F(k_1,...,k_h;n)$ (see Definition \ref{2.2}). If either $h=1$, or $k_1=1$ and $h=2$, then each homogeneous component of $c_{SM}(\Sigma_{a^\alpha}^\circ(F_\bullet))$ can be represented by an irreducible subvariety.
\end{Prop}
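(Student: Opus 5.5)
The plan is to reduce the Proposition to the first Theorem. It suffices to construct, for each Schubert variety $X=\Sigma_{a^\alpha}(F_\bullet)$ in $F(k_1,\dots,k_h;n)$ with $h=1$, or with $h=2$ and $k_1=1$, a $B$-equivariant log-resolution $\pi:Y\to X$ such that $Y$ has only finitely many $B$-orbits. Here $B$ is the stabilizer of the flag $F_\bullet$.

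For $h=1$, $X$ is a Schubert variety in a Grassmannian $G(k;n)$, defined by the conditions $\dim(\Lambda\cap F_{a_i})\ge i$. I would take the standard Zelevinsky/Kempf-type resolution
$$Y=\{(V_1\subset\cdots\subset V_k,\ \Lambda):\ \dim V_i=i,\ V_i\subset F_{a_i},\ V_k=\Lambda\},$$
collapsing only over the essential jumps of the sequence $a$. $Y$ is an iterated tower of Grassmannian bundles, so it is smooth and $B$-equivariant, and it is birational to $X$. The boundary of $Y$ is the preimage of $X\setminus X^\circ$. It is the union of the loci where some $V_i$ meets $F_{a_i-1}$ in too large a dimension, or where some $V_i$ equals a larger-index partial-flag condition. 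I would check in local fibre coordinates that these loci are smooth divisors meeting transversally.

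The finiteness of $B$-orbits holds because $Y$ is a $B$-stable closed subvariety of the partial flag variety $\{(V_i)\}$, and $B$ has finitely many orbits there by the Bruhat decomposition. Equivalently, $Y$ is isomorphic to a Schubert variety in a suitable flag variety, as mentioned for Bott--Samelson varieties in the introduction. In fact Huh's result already covers Grassmannians, so this case can also be cited directly.

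For $h=2$ and $k_1=1$, points of $X$ are pairs (line $\ell\subset\Lambda$) subject to incidence conditions with $F_\bullet$. I would resolve in two stages. First, resolve the Grassmannian condition on $\Lambda$ as above. Then the condition on $\ell$ amounts to $\ell\subset \Lambda\cap F_{b}$ for an appropriate $b$. I would add an auxiliary subspace $V_j\subset\Lambda$ with $\ell\subset V_j$, chosen to make this condition a projective bundle over the first stage. The result $Y$ is again a tower of Grassmannian and projective bundles sitting in a partial flag variety of $F_\bullet$-compatible flags. It is therefore $B$-stable with finitely many $B$-orbits, and smooth.

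The main obstacle is proving that $\pi^{-1}(X\setminus X^\circ)$ is a simple normal crossings divisor, especially in the $h=2$ case. There, the position of $\ell$ relative to the $V_i$ produces extra boundary components, which could fail to be transversal or fail to be divisors. To handle this, I would first cover $Y$ by $T$-stable affine charts around $T$-fixed points; every $B$-orbit closure contains a fixed point, so these charts suffice. In each chart I would write the boundary components explicitly as coordinate hyperplanes. If some component is not a divisor, I would refine the tower by inserting the corresponding intermediate subspace and repeat.

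Once $Y$ is constructed, the first Theorem gives Hodge rigidity of every homogeneous component of $c_{SM}(\Sigma^\circ_{a^\alpha}(F_\bullet))$.
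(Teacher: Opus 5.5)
Your reduction to Theorem~\ref{hodgerigid} and your treatment of $h=1$ agree with the paper. The tower $\{V_1\subset\cdots\subset V_k=\Lambda : V_i\subset F_{a_i}\}$ is the Schubert variety $\Gamma\subset F(1,\dots,k;n)$. Its boundary is $\bigcup_i\{V_i\subset F_{a_i-1}\}$, a union of hyperplane sub-bundles at different levels of the tower, hence SNC. You must keep the full flag, however. If ``collapsing only over the essential jumps'' means discarding the non-essential $V_i$, condition (3) fails. For example, $a=(3,4)$ in $G(2,n)$ gives $Y=X\cong G(2,F_4)$, whose boundary $\{\Lambda\cap F_2\neq 0\}$ is singular at $\Lambda=F_2$.

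The genuine gap is the case $h=2$, $k_1=1$.

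\emph{Finiteness of orbits.} Your $Y=\{(\ell,V_\bullet): V_i\subset F_{a_i},\ \ell\subset V_{i_0}\}$, with $\alpha_{i_0}=1$, is exactly the paper's $\Gamma$. But it does \emph{not} lie in a partial flag variety, because $\ell$ is not nested with $V_1,\dots,V_{i_0-1}$. It only lies in $\mathbb{P}(V)\times F(1,\dots,k_2;n)$. Being $B$-stable in a product of flag varieties gives no finiteness: for $SL_2$, $B$ has infinitely many orbits on $(\mathbb{P}^1)^4$. Your claim is true, but it needs the paper's fibrewise argument. The projection $\pi':Y\to\Sigma_{a^\beta}(F_\bullet)$, with $\beta=(1,\dots,k_2)$, is the $\mathbb{P}^{i_0-1}$-bundle $\mathbb{P}(V_{i_0})$. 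Over the $T$-fixed point $x$ of each Schubert cell, the fibre is $\mathbb{P}(x_{i_0})$ with $x_{i_0}$ a coordinate subspace. There $T\subset\operatorname{Stab}_B(x)$ already has only $2^{i_0}-1$ orbits.

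\emph{SNC.} You give no argument for the SNC condition, and ``refine the tower and repeat'' is not one. Each refinement would also have to be re-checked for being an isomorphism over $X^\circ$ and for finiteness of orbits. The missing observation is that no refinement is needed. We have $\{V_{i_0}\subset F_{a_{i_0}-1}\}\subset\pi'^{-1}(D_1)$, and elsewhere $V_{i_0}\cap F_{a_{i_0}-1}=V_{i_0-1}$. Hence $\pi^{-1}(X\setminus X^\circ)=\pi'^{-1}(D_1)\cup\{\ell\subset V_{i_0-1}\}$. Here $D_1=\partial\Sigma_{a^\beta}(F_\bullet)$ is SNC by the $h=1$ argument. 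The second piece is the codimension-one sub-bundle $\mathbb{P}(V_{i_0-1})\subset\mathbb{P}(V_{i_0})$. Locally over an open $U$ of the base, the union is $((D_1\cap U)\times\mathbb{P}^{i_0-1})\cup(U\times H)$ with $H$ a hyperplane, which is SNC.
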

\begin{Prop}
    Let $\Sigma_{a;b}(F_\bullet)$ be a Schubert variety in $SG(k,n)$ (see Definition \ref{2.8}). If $s=|a|=k-1$, then each homogeneous component of $c_{SM}(\Sigma_{a;b}^\circ(F_\bullet))$ can be represented by an irreducible subvariety.
\end{Prop}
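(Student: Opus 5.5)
The plan is to reduce to the first Theorem: it suffices to build a $B$-equivariant log resolution $\pi:Y\to\Sigma_{a;b}(F_\bullet)$ such that $Y$ has only finitely many $B$-orbits, where $B\subset Sp(n)$ is the Borel subgroup stabilizing the isotropic flag $F_\bullet$. Hodge rigidity of every homogeneous component of $c_{SM}(\Sigma^\circ_{a;b}(F_\bullet))$ then follows immediately from that Theorem.

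\textbf{Construction of $Y$.} Take $Y$ to be a Kempf--Zelevinsky type incidence resolution adapted to the data $(a;b)$ of Definition \ref{2.8}. The Schubert conditions describe $\Sigma_{a;b}$ as the set of $\Lambda\in SG(k,n)$ with
\[
\dim(\Lambda\cap F_{a_i})\ge i \quad\text{and}\quad \dim(\Lambda\cap F_{b_j}^{\perp})\ge \cdot\,.
\]
Accordingly, let $Y$ parametrize chains $V_1\subset\cdots\subset V_s\subset\Lambda$ with $V_i\subset F_{a_i}$, $\dim V_i=i$, together with the auxiliary subspaces forced by the $b$-conditions. The hypothesis $s=|a|=k-1$ is exactly what makes this work. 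Once the isotropic chain $V_1\subset\cdots\subset V_{k-1}$ is fixed, the remaining datum $\Lambda$ is a line in $V_{k-1}^{\perp}/V_{k-1}$ lying in a prescribed subspace determined by the $b$-condition. Because $V_{k-1}^\perp/V_{k-1}$ is a symplectic space and every line is isotropic, this fibre is a projective space. Hence $Y$ is an iterated projective bundle, and in particular smooth.

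\textbf{Birationality and $B$-equivariance.} The map $\pi$ is birational because on the open cell the intersections $\Lambda\cap F_{a_i}$ have exactly the prescribed dimensions, so they recover the chain uniquely. Since $\pi$ is built from incidences with $B$-stable flags, it is $B$-equivariant.

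\textbf{Log-resolution condition.} We must check that $\pi^{-1}(\Sigma_{a;b}\setminus\Sigma^\circ_{a;b})$ is a simple normal crossings divisor. Each boundary component corresponds to one of the incidence conditions becoming non-generic, namely $V_i\subset F_{a_i-1}$ or a jump in $\Lambda\cap F_{b_j}^\perp$. In the iterated bundle coordinates each such condition is the vanishing of a single coordinate hyperplane section in one fibre. Working in local charts given by the standard affine cells of the successive projective bundles, these vanishing loci are coordinate hyperplanes, which gives SNC.

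\textbf{Finitely many $B$-orbits.} This is the step I expect to be the main obstacle. A point of $Y$ is a partial isotropic flag together with $\Lambda$, so $Y$ sits $B$-equivariantly inside a product of isotropic partial flag varieties. The concern is that $B$ has infinitely many orbits on products in general. I would instead argue inductively along the bundle tower:
\begin{itemize}
\item The stabilizer in $B$ of a point in the base acts on the fibre $\mathbb{P}(W)$, where $W$ is a subquotient of the form $(F_{c}\cap V^\perp)/V$.
\item This stabilizer contains a Borel subgroup of the symplectic or linear group of that subquotient, which is compatible with the induced flag.
\item A Borel subgroup acting on a projective space through the standard representation has finitely many orbits.
\end{itemize}
The delicate point is verifying that the stabilizer really surjects onto such a Borel subgroup when the isotropy and perpendicularity constraints interact. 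This is where the restriction $s=k-1$ is used: it leaves only one non-flag step, so no pair of independent subspaces has to be compared. With finiteness established, the first Theorem applies and the result follows.
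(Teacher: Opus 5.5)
Your overall strategy matches the paper's: build a finite log resolution and apply Theorem~\ref{hodgerigid}. The paper does this with $\Gamma_{a;b}(F_\bullet)$ and Theorem~\ref{logsg}. The gap is in the resolution itself. Since $s=k-1$, there is a single index $b_1$, and the $b$-condition is just $\Lambda\subset F_{b_1}^\perp$. So in your $Y$ the fibre over a chain $V_1\subset\cdots\subset V_{k-1}$ is $\mathbb{P}\bigl((V_{k-1}+F_{b_1})^\perp/V_{k-1}\bigr)$, of dimension $n-2k+1-b_1+\dim(V_{k-1}\cap F_{b_1})$. This dimension jumps wherever $V_{k-1}$ meets $F_{b_1}$ in more than the generic dimension $x_1=\#\{i\mid a_i\le b_1\}$, and that locus is nonempty whenever $x_1<b_1<a_{k-1}$. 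Hence $Y$ is not a projective bundle, and in fact it is singular.

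Take the paper's own example $(a;b)=(3;1)$ in $SG(2,n)$, with a symplectic basis adapted to $F_\bullet$, so that $\omega(e_i,e_l)\neq0$ only if $i+l=n+1$. Near $V_1=F_1$ write $V_1=\langle v\rangle$ with $v=e_1+u_2e_2+u_3e_3$, and $\Lambda=V_1+\langle w\rangle$ with $w_1=0$ and $w_2=1$. The conditions $\Lambda\subset F_1^\perp$ and $\omega(v,w)=0$ become $w_n=0$ and $\epsilon_2u_2w_{n-1}+\epsilon_3u_3w_{n-2}=0$, where $\epsilon_i\neq0$. This is a rank-$4$ quadric in a smooth $(n-1)$-dimensional chart. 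So $Y$ is singular at $(F_1,\langle e_1,e_2\rangle)$, and Theorem~\ref{hodgerigid} cannot be applied to it.

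The missing idea is exactly the auxiliary data you mention and then drop: the column $W^{\tau,1}$ of Definition~\ref{resolution}. This is an isotropic flag with $F_{b_1}+\Lambda_\tau\subset W^{\tau,1}$ and $\dim W^{\tau,1}=b_1+\tau-x_1$ for $\tau>x_1$, satisfying $W^{\tau-1,1}\subset W^{\tau,1}\subset F_{a_\tau}$ for $\tau\le k-1$ and $W^{k,1}\subset F_{b_1}^\perp$. Choosing $W^{\tau,1}$ before $\Lambda_\tau$ makes every step of the tower a projective space of constant dimension (Theorem~\ref{smoothness}). In the example above these are the spaces $W^1,W^2$ the paper introduces for $\Sigma_{3;1}$.

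Note also that Theorem~\ref{smoothness} holds for every $(a;b)$. So $s=k-1$ is not what makes the tower smooth, contrary to your attribution. It is used afterwards: a single $b$-index means a single auxiliary column, which is what the orbit count and Lemma~\ref{SNC} handle.

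Once $Y$ is replaced by $\Gamma_{a;b}(F_\bullet)$, your SNC and orbit arguments must be redone on the larger tower, where the boundary also involves conditions on the $W^{\tau,1}$. Your orbit step also rests on an unjustified claim: that the stabilizer of a base point surjects onto a Borel subgroup of the subquotient's group. This fails in general, because stabilizers of points that are not $T$-fixed can be tiny. For instance, for $B\subset SL_2$ acting on $\mathbb{P}^1\times\mathbb{P}^1$ the generic stabilizer is $\{\pm1\}$. So after a non-$T$-fixed choice at one stage, nothing forces the next fibre to have finitely many orbits. The paper instead handles each step with Lemma~\ref{orbit}, an explicit orbit count for the stabilizer of a $T$-fixed flag.
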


The same construction of resolutions of singularities also yields information about the singular loci of Schubert varieties in symplectic Grassmannians. In particular, we obtain the following description.

\begin{Thm}
The singular locus of $\Sigma_{a;b}(F_\bullet)$ in $SG(k,n)$ is the union of the following subvarieties (for definitions see \S \ref{singularloci}):
\begin{itemize}
    \item $\Sigma(F_{b_j})$ for all $1\leq j\leq k-s$;
    \item $\Sigma(F_{a_i})$ unless $i=s$ and $a_s=\frac{n}{2}$ and $x_{k-s}= s+1-\frac{n-2b_{k-s}}{2}$;
    \item $\Sigma(F_{b_j}^\perp)$ for all $2\leq j\leq k-s$.
\end{itemize}
\end{Thm}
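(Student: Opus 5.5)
The plan is to construct an explicit $B$-equivariant resolution $\pi:Y\to\Sigma_{a;b}(F_\bullet)$ with finitely many $B$-orbits and an explicit exceptional locus, then read off the singular locus from where $\pi$ fails to be an isomorphism, using a local normality/codimension criterion.

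\emph{Step 1: the resolution.} Following the Grassmannian case, I would take $Y$ to be a tower of (isotropic) Grassmann bundles parametrizing flags
\[
V_1\subset V_2\subset\cdots\subset V_k=\Lambda ,
\]
where each $V_i$ is required to meet the flag element indexing that condition in the expected dimension: $F_{a_i}$ for the first $s$ conditions, and $F_{b_j}^{\perp}$ or $F_{b_j}$ for the remaining $k-s$. Each successive choice is a smooth Grassmannian (or isotropic Grassmannian) bundle over the previous stage, so $Y$ is smooth. The map $\pi$ forgets everything except $\Lambda$. It is birational because over the open cell the intermediate spaces are forced to equal $\Lambda\cap F_{a_i}$, $\Lambda\cap F_{b_j}^\perp$, and so on. Since $Y$ is built from flag data relative to a $B$-stable flag, $\pi$ is $B$-equivariant. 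As a by-product, $Y$ has finitely many $B$-orbits, which is what the Proposition on $SG(k,n)$ requires.

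\emph{Step 2: the exceptional locus.} The fiber of $\pi$ over $\Lambda$ is positive-dimensional exactly when some intersection $\Lambda\cap F_{a_i}$, $\Lambda\cap F_{b_j}$ or $\Lambda\cap F_{b_j}^\perp$ jumps in dimension. These jump loci are the subvarieties $\Sigma(F_{a_i})$, $\Sigma(F_{b_j})$ and $\Sigma(F^\perp_{b_j})$ of \S\ref{singularloci}. Hence the non-isomorphism locus of $\pi$ is contained in their union. Conversely, I would use Zariski's main theorem: the Schubert variety is normal, so a point with positive-dimensional fiber lies in the singular locus as long as $\pi$ is small there fails, i.e.\ as long as the exceptional divisor maps with positive-dimensional fibers. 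For the reverse inclusion I would instead compute Zariski tangent spaces at the $T$-fixed point of each component and compare with $\dim\Sigma_{a;b}$, or equivalently exhibit the locus as the image of an exceptional divisor. Two cases need separate treatment:
\begin{itemize}
\item For $j=1$ the condition $F_{b_1}^\perp$ is automatic, which is why the list starts at $j=2$.
\item For $F_{a_s}$ with $a_s=n/2$, the isotropic constraint means the fiber jump can be absorbed, giving a smooth point. This happens precisely under the stated numerical condition $x_{k-s}=s+1-\frac{n-2b_{k-s}}{2}$.
\end{itemize}

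\emph{Step 3: smoothness off the union.} Away from the listed loci all intersection dimensions are constant, so $\pi$ is an isomorphism there and those points are smooth.

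\emph{Main obstacle.} The hard part will be the converse, that each listed locus really is singular, and especially the exceptional case $a_s=n/2$. I would first try the tangent space computation at a torus-fixed point of a generic point of each locus. For the degenerate case I would look for a second resolution, obtained by reordering the flag conditions, that is an isomorphism there.
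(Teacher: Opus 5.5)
Your outline has the same architecture as the paper's proof: resolve, locate the exceptional locus, decide singularity there. However, two load-bearing steps fail as stated.

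\textbf{Step 1: your $Y$ is not smooth.} A space $Y$ made only of flags $V_1\subset\cdots\subset V_k=\Lambda$ inside $\Lambda$ is a Schubert variety in $SF(1,\dots,k;n)$, and it is not a tower of bundles. For $i>s$ and $b=b_{k-i+1}$, the line $V_i/V_{i-1}$ ranges over $\mathbb{P}\bigl((F_b^\perp\cap V_{i-1}^\perp)/V_{i-1}\bigr)$. The dimension $\dim(F_b^\perp\cap V_{i-1}^\perp)=n-b-i+1+\dim(V_{i-1}\cap F_b)$ jumps. For instance, take $k=2$, $s=0$, $1\le b_1<b_2$. Up to a smooth quotient by change of basis, your $Y$ is the quadric $\{\omega(v_1,v_2)=0\}\subset F_{b_2}^\perp\times F_{b_1}^\perp$. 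Its gradient vanishes wherever $v_1\in F_{b_1}$ and $v_2\in F_{b_2}$, so $Y$ is singular there.

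This is exactly why the paper introduces the auxiliary isotropic spaces $W^{\tau,j}\supset\Lambda_\tau+F_{b_j}$ of Definition \ref{resolution}. They give the ambient space at each step constant dimension, so $\Gamma_{a;b}(F_\bullet)$ is a genuine tower of Grassmannian and symplectic Grassmannian bundles (Theorem \ref{smoothness}). They are also why the loci $\Sigma(F_{b_j})$ enter the exceptional locus at all. The fibres of your $Y$ over $\Lambda$ depend only on $\Lambda\cap F_{a_i}$ and $\Lambda\cap F_{b_j}^\perp$, so your Step 2 claim about jumps of $\Lambda\cap F_{b_j}$ does not hold for it.

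\textbf{Step 2: the singularity criterion is backwards.} Normality and Zariski's main theorem only give connected fibres and an isomorphism where fibres are finite; they never certify a singular point. An exceptional \emph{divisor} with positive-dimensional fibres is compatible with a smooth target, as blow-ups show. The paper instead uses purity of the exceptional locus (Lemma \ref{component}): over smooth points it is divisorial, so an exceptional component of codimension at least $2$ maps into the singular locus. The substance of the proof is then the case-by-case computation of $\dim\Sigma_{a;b}-\dim\Sigma(\cdot)-\dim\pi^{-1}(\Lambda)$ in cases (I-1-1)--(III). Tangent-space counts are reserved for the leftover loci, including the excluded $\Sigma(F_{a_s})$. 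Your Step 3 does not cover that locus, since intersection dimensions do jump there. You also only assert, without derivation, that the condition on $x_{k-s}$ is the right one.

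\textbf{The leftover range $b_j\ge a_s$ cannot be settled as the statement claims.} For $k=1$, $s=0$, $b_1\ge 1$, the resolution of Definition \ref{resolution} is the blow-up of the smooth variety $\Sigma_{;b_1}=\mathbb{P}(F_{b_1}^\perp)$ along $\Sigma(F_{b_1})=\mathbb{P}(F_{b_1})$. This is precisely the situation your Step 2 criterion misreads. Similarly, for $(a;b)=(1;2)$ in $SG(2,n)$ with $n\ge 6$, $\Sigma_{1;2}\cong\mathbb{P}(F_2^\perp/F_1)$ is smooth, yet $\Sigma(F_{b_1})=\{F_2\}$ is nonempty. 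So the first bullet fails in general in that range, and no tangent-space computation will establish it there. The paper's own argument in that range counts parameters for the individual $e_\tau'(0)$ rather than computing the Zariski tangent space.
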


\subsection*{Organization of the paper}
In \S\ref{sec-prelim}, we review the basic facts and introduce our notation for Schubert varieties and Chern--Schwartz--MacPherson (CSM) classes. In \S\ref{Hodgerigidity}, we prove that the homogeneous components of CSM classes are represented by irreducible subvarieties whenever a finite log resolution exists. In \S\ref{Cominuscule}, we apply this result to Schubert cells in cominuscule homogeneous varieties. In \S\ref{typeA}, we construct resolutions of singularities for Schubert varieties in type 
$A$ flag varieties, and show that these resolutions are finite log resolutions under suitable assumptions. Finally, in \S\ref{SG}, we study Schubert varieties and Schubert cells in symplectic Grassmannians.

\textbf{Acknowledgments.} 
A. S. would like to acknowledge grants from Beijing Institute of Mathematical Sciences and Applications (BIMSA), the Beijing NSF BJNSF-IS24005, the China National Science Foundation (NSFC) NSFC-RFIS program W2432008. He would like to thank NSF AI Institute for Artificial Intelligence and Fundamental Interactions at Massachusetts Iinstitute of Technology (MIT) which is funded by the US NSF grant under Cooperative Agreement PHY-2019786. He would also like to thank China's National Program of Overseas High Level Talent for generous support.

\section{Preliminaries}\label{sec-prelim}

\subsection{Schubert varieties}In this subsection, we introduce our notation and review some basic facts about Schubert varieties in rational homogeneous spaces. A general reference is \cite{BL}. Fix once and for all a complex vector space $V$ of dimension $n$.

\subsubsection{Grassmannians}\label{2.1.1}
The Grassmannian $G(k,V)=G(k,n)$ parametrizes $k$-dimensional vector subspaces of $V$. It is smooth and irreducible of dimension $k(n-k)$. A Schubert index $a=(a_1,\dots,a_k)$ for $G(k,n)$ is a strictly increasing sequence of integers
$$1\leq a_1<...<a_k\leq n.$$
\begin{Def}
Given a complete flag $F_\bullet: F_1\subset \cdots \subset F_n=V$ of subspaces of $V$, where the lower index denotes the dimension, and a Schubert index $a$, the Schubert variety $\Sigma_a(F_\bullet)$ is defined by
$$\Sigma_a(F_\bullet):=\{\Lambda\in G(k,n)|\dim(\Lambda\cap F_{a_i})\geq i,\ 1\leq i\leq k\}.$$
\end{Def}

\subsubsection{Partial flag varieties}
The partial flag variety $F(k_1,\dots,k_h;n)$ is defined by 
$$F(k_1,...,k_h;n):=\{(\Lambda_1,...,\Lambda_h)|\Lambda_t\subset\Lambda_{t+1},\Lambda_t\in G(k_t,V), 1\leq t\leq h\}.$$
Here we set $\Lambda_{h+1}:=V$ and $k_{h+1}:=n$. It is a smooth projective variety of dimension $\sum_{t=1}^h k_t(k_{t+1}-k_{t})$. 

The Schubert varieties in $F(k_1,\dots,k_h;n)$ can be defined as follows. A Schubert index $a^\alpha$ for $F(k_1,\dots,k_h;n)$ consists of two sequences $a=(a_1,\dots,a_{k_h})$ and $\alpha=(\alpha_1,\dots,\alpha_{k_h})$, where $a$ is a strictly increasing sequence of positive integers with $a_{k_h}\leq n$, and $\alpha$ is a sequence taking values in $\{1,\dots,h\}$ such that 
$$\#\{i|\alpha_i=t\}=k_t-k_{t-1},\ \ \ 1\leq t\leq h.$$ Here we set $k_0=0$. 

For each pair $(i,t)$ with $1\leq i\leq k_h$ and $1\leq t\leq h$, set
$$\mu_{i,t}:=\#\{i'|i'\leq i,\alpha_{i'}\leq t\}.$$

\begin{Def}\label{2.2}
Given a Schubert index $a^\alpha$ for $F(k_1,\dots,k_h;n)$ and a complete flag of subspaces $F_\bullet: F_1\subset \cdots \subset F_n=V$, the Schubert variety $\Sigma_{a^\alpha}(F_\bullet)$ is defined by
$$\Sigma_{a^\alpha}(F_\bullet):=\{(\Lambda_1,...,\Lambda_h)\in F(k_1,...,k_h;n)|\dim(F_{a_i}\cap \Lambda_t)\geq \mu_{i,t},1\leq i\leq k_h,1\leq t\leq h\}.$$
\end{Def}

\subsubsection{Orthogonal Grassmannians}
Let $q$ be a non-degenerate symmetric bilinear form on $V$. A linear subspace $W\subset V$ is called isotropic with respect to $q$ if $q(W,W)=0$. The maximal dimension of an isotropic subspace is $\left\lfloor \frac{n}{2}\right\rfloor$, where $n=\dim(V)$. For $k<\frac{n}{2}$, the orthogonal Grassmannian $OG(k,V)=OG(k,n)$ is the subvariety of $G(k,n)$ parametrizing isotropic $k$-dimensional subspaces with respect to $q$. When $n$ is even and $k=\frac{n}{2}$, the space of $k$-dimensional isotropic subspaces has two irreducible components, and we let $OG(k,2k)$ denote one of these components.

A Schubert index $(a;b)$ for $OG(k,n)$ consists of two strictly increasing sequences of integers 
$$1\leq a_1<...< a_s\leq \frac{n}{2}\ \text{ and }\ 0\leq b_1<...< b_{k-s}\leq \frac{n}{2}-1,$$
of lengths $s$ and $k-s$, respectively, such that $a_i\neq b_j+1$ for all $1\leq i\leq s$ and $1\leq j\leq k-s$. If $n=2k$, we further require that $s$ and $k$ have the same parity, namely,
$$ s \equiv k \pmod{2}.$$

Given an isotropic subspace $W$, we denote by $W^\perp$ its orthogonal complement with respect to $q$. Fix a complete flag of isotropic subspaces $F_\bullet:F_1\subset...\subset F_{\left[n/2\right]}$. When $n$ is even, the orthogonal complement of $F_{n/2-1}$ contains exactly two maximal isotropic subspaces: one is $F_{n/2}$, and the other lies in the irreducible component different from that of $F_{n/2}$. By abuse of notation, we also denote this second maximal isotropic subspace by $F_{n/2-1}^\perp$.
\begin{Def}
Given a Schubert index $(a;b)$ for $OG(k,n)$, the corresponding Schubert variety $\Sigma_{a;b}(F_\bullet)$ is defined as the Zariski closure of the locus
$$\Sigma^\circ_{a;b}(F_\bullet):=\{\Lambda\in OG(k,n)|\dim(\Lambda\cap F_{a_i})= i, \dim(\Lambda\cap F_{b_j}^\perp)= k-j+1, 1\leq i\leq s, 1\leq j\leq k-s\}.$$
\end{Def}

\begin{Rem}
For convenience, we often list the sequence $a$ from $a_1$ to $a_s$ and the sequence $b$ from $b_{k-s}$ to $b_1$, that is,
$$(a;b)=(a_1,...,a_s;b_{k-s},...,b_1),$$
so as to match the ordering from an isotropic flag
$$F_{a_1}\subset...\subset F_{a_s}\subset F_{b_{k-s}}^\perp\subset...\subset F_{b_1}^\perp.$$
\end{Rem}

\subsubsection{Orthogonal partial flag varieties}

Let $q$ be a non-degenerate symmetric bilinear form on $V$. Let $1\leq k_1<...<k_h\leq \frac{n}{2}$ be a strictly increasing sequence of positive integers. If $k_h<\frac{n}{2}$, the orthogonal partial flag variety $OF(k_1,\dots,k_h;n)$ parametrizes all $h$-step partial flags $(\Lambda_1,\dots,\Lambda_h)$ such that $\Lambda_t\subset \Lambda_{t+1}$ and each $\Lambda_t$ is an isotropic subspace of dimension $k_t$. When $k_h=\frac{n}{2}$, the space of isotropic partial flags has two irreducible components, and we let $OF(k_1,\dots,k_h;n)$ denote one of them.

A Schubert index $(a^\alpha;b^\beta)$ for $OF(k_1,...,k_h;n)$, where $n\geq 2k_h$, consists of two strictly increasing sequences of integers $$1\leq a_1^{\alpha_1}<...<a^{\alpha_s}_s\leq \frac{n}{2}$$ and $$0\leq b^{\beta_1}_1<...<b^{\beta_{k_h-s}}_{k_h-s}\leq \frac{n}{2}-1,$$ of lengths $s$ and $k_h-s$, respectively, where $0\leq s\leq k_h$, such that

\begin{enumerate}
\item $a_i\neq b_j+1$ for all $1\leq i\leq s$ and $1\leq j\leq k_h-s$, and 
\item $k_t-k_{t-1}=\#\{i|\alpha_i=t\}+\#\{j|\beta_j=t\}$ for all $1\leq t\leq h$. Here we set $k_0=0$.
\end{enumerate}
If $n=2k_h$, then we further require $s$ and $k_h$ have the same parity, i.e.
$$ s \equiv k_h \pmod{2}.$$

\begin{Def}
Given a Schubert index $(a^\alpha;b^\beta)$ for $OF(k_1,...,k_h;n)$ and a flag of isotropic subspaces $F_1\subset ...\subset F_{\left[n/2\right]}$ in $V$, set
$$\mu_{i,t}:=\#\{i'|i'\leq i,\alpha_{i'}\leq t\},$$
$$\nu_{j,t}:=\#\{i''|\alpha_{i''}\leq t\}+\#\{j'|j'\geq j,\beta_{j'}\leq t\}.$$
The Schubert variety $\Sigma_{a^\alpha;b^\beta}(F_\bullet)$ is defined as the Zariski closure of the locus
$$\Sigma^\circ_{a^\alpha;b^\beta}(F_\bullet):=\{(\Lambda_1,...,\Lambda_h)\in OF(k_1,...,k_h;n)|\dim(F_{a_i}\cap \Lambda_t)= \mu_{i,t},\dim(F^\perp_{b_j}\cap \Lambda_t)= \nu_{j,t},$$
$$\ \ \ \ \ \ \ \ \ \ \ \ \ \ \ \ \ \ \ \ \ \ \ \ \ \ \ \ \ \ \ \ \ \  \ \ \ \ \ \ \ \ \ \ \ \ \ \ \ \ \ \ \ \ \ \ 1\leq i\leq s, 1\leq j\leq k_h-s, 1\leq t\leq h\}.$$
\end{Def}

\subsubsection{Symplectic Grassmannians}
Assume that $n=\dim(V)$ is even. Let $\omega$ be a non-degenerate skew-symmetric bilinear form on $V$. A subspace $W\subset V$ is called isotropic with respect to $\omega$ if $\omega(v,w)=0$ for all $v,w\in W$. The symplectic Grassmannian $SG(k,n)$ parametrizes all $k$-dimensional isotropic subspaces of $V$ with respect to $\omega$.

A Schubert index $(a;b)$ for $SG(k,n)$ consists of two strictly increasing sequences of integers
$$1\leq a_1<...<a_s\leq \frac{n}{2}\ \text{ and }\ 0\leq b_1<...<b_{k-s}\leq \frac{n}{2}-1,$$
such that $a_i-b_j\neq 1$ for any $1\leq i\leq s$ and $1\leq j\leq k-s$. 

\begin{Rem}
For convenience, we often list the sequence $a$ from $a_1$ to $a_s$ and the sequence $b$ from $b_{k-s}$ to $b_1$, that is,
$$(a;b)=(a_1,...,a_s;b_{k-s},...,b_1),$$
so as to match the ordering from an isotropic flag
$$F_{a_1}\subset...\subset F_{a_s}\subset F_{b_{k-s}}^\perp\subset...\subset F_{b_1}^\perp.$$
\end{Rem}

\begin{Def}\label{2.8}
Given a Schubert index $(a;b)$ for $SG(k,n)$ and an isotropic flag
$$F_\bullet: F_{a_1}\subset...\subset F_{a_s}\subset F_{b_{k-s}}^\perp\subset...\subset F_{b_1}^\perp,$$
the Schubert variety $\Sigma_{a;b}(F_\bullet)$ is defined as the Zariski closure in $SG(k,n)$ of the locus
$$\Sigma^\circ_{a;b}(F_\bullet):=\{\Lambda\in SG(k,n)|\dim(\Lambda\cap F_{a_i})=i,1\leq i\leq s, $$
$$\ \ \ \ \ \ \ \ \ \ \ \ \ \ \ \ \ \ \ \ \ \ \ \ \  \ \ \ \ \ \ \ \ \ \ \ \ \ \ \ \ \ \ \ \ \ \ \ \ \ \ \ \ \dim(\Lambda\cap F_{b_j}^\perp)=k-j+1,1\leq j\leq k-s\}.$$
\end{Def}

\subsubsection{Symplectic partial flag varieties}\label{2.1.6}

Assume that $n=\dim(V)$ is even. Let $\omega$ be a non-degenerate skew-symmetric bilinear form on $V$. The symplectic partial flag variety $SF(k_1,\dots,k_h;n)$ parametrizes all $h$-step isotropic partial flags $\Lambda_1\subset...\subset \Lambda_h$, where $\dim(\Lambda_t)=k_t$ for $1\leq t\leq h$. When $h=1$, we obtain the symplectic Grassmannian $SG(k_1,n)$.

A Schubert index $(a^\alpha;b^\beta)$ for $SF(k_1,\dots,k_h;n)$, where $n\geq 2k_h$, consists of two strictly increasing sequences of integers
$$1\leq a_1^{\alpha_1}<...<a^{\alpha_s}_s\leq \frac{n}{2}$$
and 
$$0\leq b^{\beta_1}_1<...<b^{\beta_{k_h-s}}_{k_h-s}\leq \frac{n}{2}-1,$$ of lengths $s$ and $k_h-s$, respectively, where $0\leq s\leq k_h$, such that 
\begin{enumerate}
\item $a_i\neq b_j+1$ for all $1\leq i\leq s,1\leq j\leq k_h-s$, and 
\item $k_t-k_{t-1}=\#\{i|\alpha_i=t\}+\#\{j|\beta_j=t\}$ for all $1\leq t\leq h$. Here we set $k_0=0$.
\end{enumerate}

\begin{Def}
Given an isotropic flag 
$F_\bullet: F_1\subset ...\subset F_{n/2}\subset F_{n/2-1}^\perp\subset...\subset F_1^\perp\subset F_0^\perp=V$ and a Schubert index $(a^\alpha;b^\beta)$ for $SF(k_1,...,k_h;n)$, set
$$\mu_{i,t}:=\#\{i'|i'\leq i,\alpha_{i'}\leq t\},$$
$$\nu_{j,t}:=\#\{i''|\alpha_{i''}\leq t\}+\#\{j'|j'\geq j,\beta_{j'}\leq t\}.$$
The Schubert variety $\Sigma_{a^\alpha;b^\beta}(F_\bullet)$ is then defined as the Zariski closure of the locus
$$\Sigma^\circ_{a^\alpha;b^\beta}(F_\bullet):=\{(\Lambda_1,...,\Lambda_h)\in SF(k_1,...,k_h;n)|\dim(F_{a_i}\cap \Lambda_t)= \mu_{i,t},\dim(F^\perp_{b_j}\cap \Lambda_t)= \nu_{j,t},$$
$$\ \ \ \ \ \ \ \ \ \ \ \ \ \ \ \ \ \ \ \ \ \ \ \ \ \ \ \ \ \ \ \ \ \  \ \ \ \ \ \ \ \ \ \ \ \ \ \ \ \ \ \ \ \ \ \ 1\leq i\leq s, 1\leq j\leq k_h-s, 1\leq t\leq h\}.$$
\end{Def}

\subsubsection{General homogeneous varieties}\label{2.1.7}
Let $G$ be a semisimple algebraic group over an algebraically closed field $k$. Fix a maximal torus $T$ and a Borel subgroup $B\supset T$. Let $\Delta$ be the root system of $G$ relative to $T$. Let $\Delta^+$ (resp. $\Delta^-$) be the set of positive roots (resp. negative roots). The simple roots are the elements of $\Delta^+$ that cannot be expressed as a positive sum of other elements of $\Delta^+$. The set of simple roots is denoted by $\Phi$. The Weyl group $W$ is generated by the simple reflections and is isomorphic to $N(T)/T$, where $N(T)$ is the normalizer of $T$ in $G$. For $\alpha\in \Delta$, let $U_\alpha$ be the corresponding root subgroup, and let $s_\alpha$ be the corresponding reflection in $W$. 

Let $P$ be a parabolic subgroup containing $B$, and let $W_P$ be the corresponding Weyl group. The set of simple roots associated to $P$ is defined by
$$\Phi_P:=\{\alpha\in\Phi|U_{-\alpha}\not\subset P\}.$$
It is well known that the set of parabolic subgroups containing $B$ is in bijection with the power set of $\Phi$. Given a subset $I\subset \Phi$, denote by $P_I$ the parabolic subgroup such that $\Phi_{P_I}=I$. For each coset in $W/W_P$, there exists a unique element of minimal length. Let $W^P$ be the set of minimal-length representatives of $W/W_P$. For every $w\in W^P$, the $B$-stable Schubert variety $X_P(w)$ is defined as the Zariski closure of $BwP/P$ in $G/P$, endowed with its reduced induced scheme structure.

Let $w\in W^P$ and set $$\Phi_w:=\{\alpha\in\Phi_P|l(ws_\alpha)<l(w)\},$$ where $\ell$ is the length function on $W$. Note that $P_{\Phi_w}$ is the largest parabolic subgroup of $G$ such that the restriction of the canonical projection $G/P\rightarrow G/P_{\Phi_w}$ to $X_P(w)$ is birational.

\begin{Rem}
The Schubert varieties $\Sigma(F_\bullet)$ defined in \ref{2.1.1}--\ref{2.1.6} are isomorphic to $B$-stable Schubert varieties $X_P(w)$ in $G/P$, for suitable choices of $G$, $B$, and $P$. Establishing such a correspondence is standard. We will therefore blur the distinction and use these notations interchangeably in those cases.
\end{Rem}

\subsection{Chern-Schwartz-MacPherson classes}
In this subsection, we collect the necessary definitions and facts about Chern-Schwartz-MacPherson classes. For references, see \cite{AM,AM2}.

Let $X$ be a complete algebraic variety over $\mathbb{C}$. Let $F(X)$ be the group of constructible functions on $X$, generated by the characteristic functions of closed subvarieties. For $\phi\in F(X)$, consider a finite decomposition
$$\phi=\sum m_W\mathds{1}_{W}, m_W\in\mathbb{Z},$$
where the $W$ are locally closed nonsingular subvarieties of $X$. For each such $W$, there exists a desingularization $\pi:Z\to \overline{W}$ of the closure of $W$ such that $\pi^{-1}(\overline{W}\setminus W)$ is a simple normal crossings divisor with nonsingular irreducible components $D_i$.

\begin{Def}
For a locally closed nonsingular subvariety $W$ of $X$, set
    $$c_*(\mathds{1}_W):=\pi_*(\frac{c(T_Z)}{\prod(1+D_i)}\cap[Z])\in A_*(X).$$
The Chern-Schwartz-MacPherson class of $X$ is defined by
$$c_{SM}(X):=c_*(\mathds{1}_X)\in A_*(X).$$
\end{Def}

\section{Hodge rigidity of CSM classes}\label{Hodgerigidity}
In this section, we study the Hodge rigidity of CSM classes of Schubert cells. Let $G$, $B$, and $P$ be as in \S\ref{2.1.7}. Let $X=X_P(w)$ be a $B$-stable Schubert variety in $G/P$, and let $X^\circ$ be the corresponding Schubert cell, that is, the unique open dense orbit under the left action of $B$.

\begin{Def}\label{finiteresolution}
A finite log resolution of $X$ is a proper $B$-equivariant map $\pi:Y\rightarrow X$ such that
\begin{enumerate}
\item $Y$ is smooth and has finitely many $B$-orbits;
\item the induced map $\pi^{-1}(X^\circ)\rightarrow X^\circ$ is an isomorphism;
\item the complement of $\pi^{-1}(X^\circ)$ in $Y$ is a simple normal crossings divisor.
\end{enumerate}
\end{Def}

Assume that $X$ admits a finite log resolution $\pi:Y\rightarrow X$. Set $D:=Y\backslash\pi^{-1}(X^\circ)$. Define $\mathcal{L}_{Y,D,y}:\mathfrak{b}\rightarrow T_{Y,y}(-\log D)$ by sending $\xi\in\mathfrak{b}$ to the value at $y$ of the associated logarithmic vector field. Since $D$ is invariant under the action of $B$, the map $\mathcal{L}{Y,D,y}$ is well defined. Define
$$\mathfrak{X}:=\{(y,\bar{\xi})|\mathcal{L}_{Y,D,y}(\xi)=0\}\subset Y\times\mathbb{P}(\mathfrak{b}).$$
Let $\pi_{\mathfrak{X},1}:\mathfrak{X}\rightarrow Y$ and $\pi_{\mathfrak{X},2}:\mathfrak{X}\rightarrow\mathbb{P}(\mathfrak{b})$ be the two projections. Let $\Lambda$ be a general $(k+1)$-dimensional subspace of $\mathfrak{b}$, and let $\mathfrak{D}_k(\Lambda)$ be the degeneracy locus defined by $$\mathfrak{D}_{k}(\Lambda):=\pi_{\mathfrak{X},1}\circ\pi_{\mathfrak{X},2}^{-1}(\mathbb{P}(\Lambda)).$$

For each $B$-orbit $S$ in $Y$, define
$$\mathfrak{X}_S:=\pi_{\mathfrak{X},1}^{-1}(S)$$
and
$$\mathfrak{Y}_S:=\{(y,\bar{\xi})|y\in S, \xi\in\mathfrak{b}_y\}\subset Y\times\mathbb{P}(\mathfrak{b}).$$
Recall the following result:

\begin{Thm}\cite{Huh}\label{summary of Huh}
Assume that $\pi:Y\rightarrow X$ is a finite log-resolution. For a sufficiently general subspace $\Lambda\subset\mathfrak{b}$, the following hold:
\begin{itemize}
\item The $k$-dimensional component of the Chern-Schwartz-MacPherson class of $Y^\circ$ is represented by
    $$c_{SM}(Y^\circ)_k=\sum m_i[\mathcal{Z}_i]\in A_k(Y),$$
where $\mathcal{Z}_i$ are the irreducible components of $\mathfrak{D}_k(\Lambda).$
\item There is exactly one irreducible component of $\mathfrak{D}_k(\Lambda)$ that is generically supported on $Y^\circ$. Any other irreducible component is supported on a $B$-orbit $S\subset D$ such that $\mathfrak{X}_S=\mathfrak{Y}_S$ and $\text{rank}(B)>\text{rank}(\text{Stab}_B(y))$, $y\in S$.
\end{itemize}
\end{Thm}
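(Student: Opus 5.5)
This statement is recalled from \cite{Huh}, so the plan is to reconstruct Huh's argument in the present notation rather than to find a new proof. The strategy has two parts. First, identify $c_{SM}(Y^\circ)$ with the total Chern class of the logarithmic tangent bundle $T_Y(-\log D)$, and then use the $\mathfrak{b}$-action to realize each homogeneous component as a degeneracy locus. Second, read off the structure of that degeneracy locus orbit by orbit, using that $Y$ has only finitely many $B$-orbits.

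\emph{Step 1: the CSM class as a degeneracy class.} Since $D$ is a simple normal crossings divisor, Aluffi's formula gives
$$c_{SM}(Y^\circ)=c(T_Y(-\log D))\cap[Y].$$
This agrees with the definition recalled in \S\ref{sec-prelim}, applied with the identity resolution $Z=Y$.

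The infinitesimal action gives a bundle map $\mathfrak{b}\otimes\mathcal{O}_Y\to T_Y(-\log D)$. It lands in the log tangent bundle because $D$ is $B$-stable, and it is generically surjective because $Y^\circ\cong X^\circ$ is a single $B$-orbit. Fix a general $(k+1)$-dimensional $\Lambda\subset\mathfrak{b}$. Then $\mathfrak{D}_k(\Lambda)$ is exactly the locus where the restriction $\Lambda\otimes\mathcal{O}_Y\to T_Y(-\log D)$ fails to be injective.

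By finiteness of orbits, on each orbit $S$ the kernel of $\mathcal{L}_{Y,D,y}$ has constant dimension. A Kleiman-type transversality argument for a general $\Lambda$, applied orbit by orbit, then shows that $\mathfrak{D}_k(\Lambda)$ has the expected codimension $\dim Y-k$. Thom--Porteous now identifies $[\mathfrak{D}_k(\Lambda)]$ with the appropriate Chern class of $T_Y(-\log D)$, which is $c_{SM}(Y^\circ)_k$, counted with multiplicities $m_i$ along its components. This gives the first bullet.

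\emph{Step 2: the component over $Y^\circ$ and the boundary components.} On $Y^\circ$ the kernel at $y$ is $\mathfrak{b}_y$. Hence $\mathfrak{X}_{Y^\circ}=\mathfrak{Y}_{Y^\circ}$ is irreducible, since it is a projective bundle over a homogeneous space. A general $\mathbb{P}(\Lambda)$ therefore meets it in an irreducible locus, which projects to a single component.

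For a boundary orbit $S\subset D$, irreducible components of $\mathfrak{X}_S$ of the right dimension contribute components to $\mathfrak{D}_k(\Lambda)$. Suppose a component of $\mathfrak{X}_S$ is not contained in $\mathfrak{Y}_S$. Its projection to $\mathbb{P}(\mathfrak{b})$ then consists of elements acting nontrivially but tangentially in the log sense. A dimension count shows this projection is too small to be met by a general $\mathbb{P}(\Lambda)$ in the required dimension, so such components do not contribute. This forces $\mathfrak{X}_S=\mathfrak{Y}_S$ on the relevant orbits.

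\emph{Step 3: the rank condition, the expected main obstacle.} It remains to show that $\text{rank}(B)>\text{rank}(\text{Stab}_B(y))$ for every contributing boundary orbit. The idea is that the projection $\pi_{\mathfrak{X},2}(\mathfrak{Y}_S)=\overline{\bigcup_{y\in S}\mathfrak{b}_y}$ must be large enough to meet a general linear space $\mathbb{P}(\Lambda)$ of dimension $k$.

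If $\text{Stab}_B(y)$ had full rank, it would contain a maximal torus. Its Lie algebra $\mathfrak{b}_y$ would then contain regular semisimple elements, and the $B$-saturation $\bigcup_{g\in B}\mathrm{Ad}(g)\mathfrak{b}_y$ would be dense in $\mathfrak{b}$. That saturation is contained in the projection above, because conjugation by $B$ moves $y$ within the orbit $S$. This density makes the contribution behave like the open orbit: such an $S$ would be forced to lie in $Y^\circ$ rather than in $D$, or its contribution to $\mathfrak{D}_k(\Lambda)$ would be generically on $Y^\circ$, contradicting uniqueness.

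Making this precise requires comparing $\dim\mathfrak{X}_S$ with the fiber dimension of the projection to $\mathbb{P}(\mathfrak{b})$. I would first try it through the incidence correspondence and the formula $\dim S=\dim B-\dim\mathfrak{b}_y$. The delicate case is a nontrivial unipotent part of the stabilizer, which is where Huh's regularity hypothesis originally entered and where I expect the main difficulty to lie.
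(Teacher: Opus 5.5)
The paper does not prove this statement; it is quoted from \cite{Huh}. I am therefore judging your outline against what a complete argument needs, and there is a genuine gap in Step~3.

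Your density observation points the wrong way. If $S\subset D$ has a stabilizer of full rank, then density of $\bigcup_{g\in B}\mathrm{Ad}(g)\mathfrak{b}_y$ only says that $\pi_{\mathfrak{X},2}\colon\mathfrak{Y}_S\to\mathbb{P}(\mathfrak{b})$ is dominant. Far from excluding $S$, that is what lets $S$ meet a general $\mathbb{P}(\Lambda)$ at all. Any component arising this way lies in $\overline{S}\subset D$, so nothing moves it onto $Y^\circ$, and it does not conflict with uniqueness over $Y^\circ$.

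What you need is that full rank and $\mathfrak{X}_S=\mathfrak{Y}_S$ are incompatible for $S\subset D$. This comes from a global count, not from the structure of the stabilizer. Take $\xi\in\mathfrak{t}$ generic.
\begin{itemize}
\item Then $\mathfrak{D}_0(\langle\xi\rangle)\subseteq Y^T$. The set $Y^T$ is finite, with one point in each orbit whose stabilizer has full rank, because $N_B(T)=T$.
\item So this locus has the expected dimension. Thom--Porteous writes it as a positive combination of points of total degree $\deg\bigl(c_{\dim Y}(T_Y(-\log D))\cap[Y]\bigr)=\chi(Y^\circ)=1$.
\item The $T$-fixed point of $Y^\circ\cong X^\circ$ already lies in it. Hence no $T$-fixed point $y\in D$ satisfies $\mathfrak{t}\subseteq\ker\mathcal{L}_{Y,D,y}$.
\item Every orbit with full-rank stabilizer contains a $T$-fixed point. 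This gives $\mathfrak{X}_S\neq\mathfrak{Y}_S$ for every such $S\subset D$.
\end{itemize}
The unipotent part of the stabilizer plays no role.

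Step~2 rests on a reversed inclusion. The composite $\mathfrak{b}\to T_{Y,y}(-\log D)\to T_{Y,y}$ is the differential of the orbit map, whose kernel is $\mathfrak{b}_y$. So $\ker\mathcal{L}_{Y,D,y}\subseteq\mathfrak{b}_y$, i.e.\ $\mathfrak{X}_S\subseteq\mathfrak{Y}_S$ always. The case you rule out (``elements acting nontrivially but tangentially'') never occurs. The case that must be ruled out, $\mathfrak{X}_S\subsetneq\mathfrak{Y}_S$, is not treated.

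This inclusion is also the quantitative input missing from your ``Kleiman-type'' argument in Step~1. The variety $\mathfrak{Y}_S$ is an irreducible projective bundle over $S$ of dimension $\dim S+\dim\mathfrak{b}_y-1=\dim\mathbb{P}(\mathfrak{b})$, so $\dim\mathfrak{X}_S\le\dim\mathbb{P}(\mathfrak{b})$. Only with this bound does Kleiman's theorem, applied to $\pi_{\mathfrak{X},2}|_{\mathfrak{X}_S}\colon\mathfrak{X}_S\to\mathbb{P}(\mathfrak{b})$, give $\dim\mathfrak{D}_k(\Lambda)\le k$. Transversality on $Y$ itself is unavailable, since the bundle map is not surjective along $D$. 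The same count shows that a $k$-dimensional component generically supported on $S$ forces $\dim\mathfrak{X}_S=\dim\mathbb{P}(\mathfrak{b})$, i.e.\ $\mathfrak{X}_S=\mathfrak{Y}_S$.

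Finally, irreducibility of $\mathfrak{X}_{Y^\circ}$ alone does not make the preimage of $\mathbb{P}(\Lambda)$ irreducible. You need three further inputs:
\begin{itemize}
\item dominance of $\mathfrak{Y}_{Y^\circ}\to\mathbb{P}(\mathfrak{b})$, which holds because stabilizers on $Y^\circ$ have full rank, to get existence;
\item Bertini, for $k\ge 1$;
\item for $k=0$, birationality of $\mathfrak{Y}_{Y^\circ}\to\mathbb{P}(\mathfrak{b})$, i.e.\ a general regular semisimple element has exactly one zero on $Y^\circ$.
\end{itemize}
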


We prove that if $X$ admits a finite log-resolution, then each homogeneous component of $c_{SM}(X^\circ)$ is Hodge rigid.

\begin{Thm}\label{hodgerigid}
Let $X$ be a $B$-stable Schubert variety in $G/P$. Assume that $\pi:Y\rightarrow X$ is a finite log-resolution. Then each homogeneous component of $c_{SM}(X^\circ)$ can be represented by an irreducible subvariety of $X$.
\end{Thm}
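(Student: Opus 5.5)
We would deduce the statement from Theorem \ref{summary of Huh} together with functoriality of CSM classes. Since $\pi$ is proper and restricts to an isomorphism $\pi^{-1}(X^\circ)\to X^\circ$, we have $\pi_*\mathds{1}_{Y^\circ}=\mathds{1}_{X^\circ}$ with $Y^\circ:=\pi^{-1}(X^\circ)$. By MacPherson's naturality, $c_{SM}(X^\circ)=\pi_*c_{SM}(Y^\circ)$, and this holds degree by degree. Fix $k$ and a sufficiently general $(k+1)$-dimensional $\Lambda\subset\mathfrak{b}$. Theorem \ref{summary of Huh} gives $c_{SM}(Y^\circ)_k=\sum m_i[\mathcal{Z}_i]$, where exactly one component, say $\mathcal{Z}_0$, meets $Y^\circ$ and every other $\mathcal{Z}_i$ lies in a $B$-orbit $S\subset D$ with $\mathfrak{X}_S=\mathfrak{Y}_S$ and $\operatorname{rank}\operatorname{Stab}_B(y)<\operatorname{rank} B$. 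The plan is to show two things: the extra components contribute nothing after pushforward, and $\pi_*[\mathcal{Z}_0]$ is the class of the irreducible subvariety $\pi(\mathcal{Z}_0)$.

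First, we check $\mathcal{Z}_0$. On $Y^\circ\cong X^\circ$ the log tangent sheaf is the ordinary tangent sheaf, so $\mathcal{Z}_0\cap Y^\circ$ is a degeneracy locus on $X^\circ$. Since $\pi$ is an isomorphism over $X^\circ$, the restriction $\pi|_{\mathcal{Z}_0}$ is birational onto its image $\overline{\pi(\mathcal{Z}_0\cap Y^\circ)}$, which is irreducible of dimension $k$. Hence $\pi_*[\mathcal{Z}_0]=[\pi(\mathcal{Z}_0)]$. The multiplicity $m_0$ should be $1$: generically on $Y^\circ$ the locus is a transversal degeneracy locus of a general linear system of vector fields. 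If transversality is awkward to verify directly, we instead use that $c_{SM}$ of a cell is effective and compare with Huh's argument (Kleiman-type transversality for the $B$-action, where $Y^\circ$ is a single orbit).

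Second, and this is the main obstacle, we must show that for $i\neq 0$ either $\pi_*[\mathcal{Z}_i]=0$ or these components do not occur at all. This is exactly where Huh's regularity hypothesis was used: it ensured that no $\mathcal{Z}_i$ lies in $D$. Without it, we use the dimension count available on $S$. Since $\mathfrak{X}_S=\mathfrak{Y}_S$, the fibre over $y\in S$ is $\mathbb{P}(\mathfrak{b}_y)$, whose dimension is $\dim\mathfrak{b}-\dim S-1$. A component inside $S$ then has dimension at most $\dim S+\dim\mathbb{P}(\mathfrak{b}_y)+\dim\mathbb{P}(\Lambda)-\dim\mathbb{P}(\mathfrak{b})$, which is computed to be $k$ exactly. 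So such components do occur, and we must show that they collapse under $\pi$.

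The idea for the collapse is as follows. The points $y\in\mathcal{Z}_i$ are those where $\mathfrak{b}_y\cap\Lambda\neq0$. Because $\operatorname{rank}\operatorname{Stab}_B(y)<\operatorname{rank}B$, the stabilizer of $\pi(y)\in X$ (a Schubert cell point in $G/P$, containing a maximal torus conjugate) is strictly larger than that of $y$. Equivalently, $\pi(S)$ is a $B$-orbit of $G/P$ of dimension less than $\dim S$, so the fibres of $S\to\pi(S)$ have positive dimension. We would then show that $\mathcal{Z}_i\cap S$ is a union of such fibres, and therefore $\dim\pi(\mathcal{Z}_i)<k$. To see this, note that the condition $\mathfrak{b}_y\cap\Lambda\ne0$ is governed by $\mathfrak{b}_{\pi(y)}$ up to the unipotent part, and the torus-rank defect forces $\mathfrak{b}_y\cap\Lambda$ to vary in a family of dimension bigger than the count permits. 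This gives $\pi_*[\mathcal{Z}_i]=0$ for $i\neq0$. Consequently $c_{SM}(X^\circ)_k=m_0[\pi(\mathcal{Z}_0)]$, with $m_0=1$, which is the class of an irreducible subvariety.
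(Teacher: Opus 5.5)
Your reduction is the same as the paper's. You push $c_{SM}(Y^\circ)_k=\sum m_i[\mathcal{Z}_i]$ forward along $\pi$ and keep the principal component; its irreducibility and multiplicity one the paper also takes from Huh. You then need $\pi_*[\mathcal{Z}_i]=0$ for every component lying in a boundary orbit $S\subset D$. That last step is the entire content of the theorem, and your argument for it does not work.

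The rank inequality, as you use it, only gives $\dim\operatorname{Stab}_B(y)<\dim\operatorname{Stab}_B(x)$ for $x=\pi(y)$, i.e.\ that the fibres of $S\to\pi(S)$ are positive-dimensional. It does not follow that $\mathcal{Z}_i\cap S$ is a union of such fibres, and in general it is not. The fibre of $S\to\pi(S)$ over $x$ is the single orbit $\operatorname{Stab}_B(x)\cdot y$, and $\mathfrak{b}_{sy}=\mathrm{Ad}_s(\mathfrak{b}_y)$ moves with $s$. Meanwhile, for general $\Lambda$, the intersection $\Lambda\cap\mathfrak{b}_x$ is typically just a line $\mathbb{C}\xi$. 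The condition $\xi\in\mathrm{Ad}_s(\mathfrak{b}_y)$ then cuts out, in general, a proper closed subset of $\operatorname{Stab}_B(x)$. The phrases ``governed by $\mathfrak{b}_{\pi(y)}$ up to the unipotent part'' and ``a family of dimension bigger than the count permits'' do not supply a replacement argument.

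What you actually need is weaker. Set $D_k(\Lambda,S)=\{y\in S\mid \Lambda\cap\mathfrak{b}_y\neq 0\}$. You need $\pi^{-1}(x)\cap D_k(\Lambda,S)$ to be positive-dimensional for general $x\in\pi(D_k(\Lambda,S))$. In other words, you need a way to move $y$ inside its fibre while keeping some nonzero vector of $\Lambda$ in the isotropy algebra. The paper gets this by a case split; write $d_x=\dim\mathfrak{b}_x$, $d_y=\dim\mathfrak{b}_y$.
\begin{enumerate}
\item Let $N$ be the normalizer of $\mathfrak{b}_y$ in $\operatorname{Stab}_B(x)$. If $\dim N>d_y$, then $N\cdot y$ works, because $\mathfrak{b}_{ny}=\mathfrak{b}_y$.
\item Otherwise $y'\mapsto\mathfrak{b}_{y'}$ has finite fibres onto an orbit $\Psi\subset G(d_y,\mathfrak{b}_x)$ of dimension $d_x-d_y$. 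The incidence variety $I\subset\mathbb{P}(\mathfrak{b}_x)\times\overline{\Psi}$ then has dimension $d_x-1=\dim\mathbb{P}(\mathfrak{b}_x)$.
\begin{enumerate}
\item If the fibre of $I$ over $\bar\xi_0$, with $\xi_0\in\Lambda\cap\mathfrak{b}_y$, is positive-dimensional, it gives the family directly.
\item Otherwise $I\to\mathbb{P}(\mathfrak{b}_x)$ is dominant. Since $\mathfrak{b}_x$ contains a Cartan subalgebra, take a regular element $\xi\in\Lambda\cap\mathfrak{b}_x$ lying in some $\mathfrak{b}_y$. The maximal torus $T_x\subset\operatorname{Stab}_B(x)$ centralizing $\xi$ keeps $\xi\in\mathfrak{b}_{ty}$ for all $t\in T_x$. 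Moreover $\dim T_x\cdot y\geq 1$ precisely because $\operatorname{rank}\operatorname{Stab}_B(y)<\operatorname{rank}B$.
\end{enumerate}
\end{enumerate}
This mechanism, a torus that fixes a vector of $\Lambda$ while moving $y$, is where the rank hypothesis really enters. Your proposal is missing it, or any substitute for it.
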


\begin{proof}
Let $c_{SM}(X^\circ)_k$ be the $k$-dimensional component of $c_{SM}(X^\circ)$. By the functoriality of CSM classes, it suffices to show that the pushforward of the class of any irreducible component of $\mathfrak{D}_k(\Lambda)$ is $0$, except for the unique component that is generically supported on $Y^\circ$.

Let $\Lambda$ be a general $(k+1)$-dimensional subspace of $\mathfrak{b}$. For each $B$-orbit $S$ in $Y$, set
$$D_k(\Lambda, S):=\{y\in S|\Lambda\cap \mathfrak{b}_y\neq 0\}.$$ 
Let $\mathfrak{I}$ be an irreducible component of $\mathfrak{D}_k(\Lambda)$ generically supported on a $B$-orbit $S\subset D$. By Theorem \ref{summary of Huh}, we have $\mathfrak{X}_S=\mathfrak{Y}_S$ and $\text{rank}(B)>\text{rank}(\text{Stab}_B(y))$ for $y\in S$. Notice that $\mathfrak{I}\subset \mathfrak{D}_k(\Lambda)\cap S=D_k(\Lambda,S)$. We claim that $\dim(\pi(D_k(\Lambda,S)))<\dim(D_k(\Lambda,S))$, and hence $\pi_*(\left[\mathfrak{I}\right])=0$.

    It suffices to show that for a general point $x\in \pi(D_k(\Lambda,S))$, we have $\dim(\pi^{-1}(x)\cap D_k(\Lambda,S))\geq 1.$ Let $y\in D_k(\Lambda,S)$ be such that $\pi(y)=x$. Since $\operatorname{Stab}_B(y)\subset \operatorname{Stab}_B(x)$, we have $\mathfrak{b}_y\subset \mathfrak{b}_x$. In particular, $\Lambda\cap \mathfrak{b}_y\neq 0$ implies $\Lambda\cap \mathfrak{b}_x\neq 0$.

Set $d_x:=\dim(\mathfrak{b}_x)$ and $d_y:=\dim(\mathfrak{b}_y)$. Define
\begin{eqnarray}
    \eta:\pi^{-1}(x)\cap S&\rightarrow& G(d_y,\mathfrak{b}_x)\nonumber\\
    y'&\mapsto& \mathfrak{b}_{y'}\nonumber
\end{eqnarray}
Let $\Psi$ be the image of $\eta$. Note that $\Psi$ is the orbit of $\mathfrak{b}_y$ under the adjoint action of $\operatorname{Stab}_B(x)$. Thus $\Psi\cong \text{Stab}_B(x)/N$, where $N$ is the normalizer of $\mathfrak{b}_y$ in $\operatorname{Stab}_B(x)$:
$$N:=\{s\in\text{Stab}_B(x)|Ad_s(\mathfrak{b}_y)=\mathfrak{b}_y\}.$$
In particular, $\Psi$ is locally closed and $$\dim(\Psi)=\dim(\text{Stab}_B(x))-\dim(N)=d_x-\dim(N).$$ 
Since $\operatorname{Stab}_B(y)\subset N$, we obtain
$$\dim(\Psi)\leq d_x-d_y,$$
with equality holds if $\eta$ is injective.

If $\dim(N)\geq d_y+1$, then $N\cdot y\subset \pi^{-1}(x)\cap D_k(\Lambda,S)$ and $\dim(N\cdot y)\geq 1$. This proves the claim in this case.

Now assume that $\dim(\Psi)=d_x-d_y$. Let $I$ be the Zariski closure of the locus
$$I^\circ:=\{(\bar{\xi},\mathfrak{b}_{y'}|\bar{\xi}\in\mathbb{P}(\mathfrak{b}_{y'}),y'\in\pi^{-1}(x)\cap S\}\subset \mathbb{P}(\mathfrak{b}_x)\times G(d_y,\mathfrak{b}_x).$$
Let $\pi_1:I\rightarrow \mathbb{P}(\mathfrak{b}_x)$ and $\pi_2:I\rightarrow\overline{\Psi}$ be the two projections. The fibers of $\pi_2$ have dimension $d_y-1$, and hence 
$$\dim(I)=\dim(\Psi)+d_y-1=d_x-1=\dim\mathbb{P}(\mathfrak{b}_x).$$
First assume that the fibers of $\pi_1$ have dimension at least $1$. Let $\xi_0\in \Lambda\cap \mathfrak{b}_y$. Since $\eta$ is injective, the set $\eta^{-1}(\pi_2(\pi_1^{-1}(\bar{\xi_0}))\cap \Psi)$ is contained in $\pi^{-1}(x)\cap D_k(\Lambda,S)$ and has dimension at least $1$.

Now assume that a general fiber of $\pi_1$ has dimension $0$. Then $\dim(\pi_1(I))=\dim\mathbb{P}(\mathfrak{b}_x)$, and hence $\pi_1|_{I^\circ}$ is dominant. Every $B$-orbit in $G/P$ contains a unique point fixed by the maximal torus $T$, and it follows by conjugation that every point $x\in X$ is fixed by some maximal torus $T_x\subset B$. Hence $\mathfrak{b}_x$ contains the Lie algebra of $T_x$, and in particular contains a regular element of $\mathfrak{b}$. Since the set of regular elements is open and dense in $\mathfrak{b}$, for a general point $x\in \pi(S)$, the intersection $\Lambda\cap \mathfrak{b}_x$ contains a regular element $\xi$.

By the dominance of $\pi_1|_{I^\circ}$, there exists $y\in \pi^{-1}(x)\cap S$ such that $\xi\in \mathfrak{b}_y$. Hence $y\in \pi^{-1}(x)\cap D_k(\Lambda,S)$. Let $T_x$ be a maximal torus in $\operatorname{Stab}_B(x)$ that centralizes $\xi$ under the adjoint action. Then for every $t\in T_x$, we have $t\cdot y\in \pi^{-1}(x)\cap D_k(\Lambda,S)$. Moreover, $\dim(T_x)=\text{rank}(B)>\text{rank}(\text{Stab}_B(y))$. Therefore $T_x\not\subset \text{Stab}_B(y)$ and hence $\dim(T_x\cdot y)\geq 1$. We conclude that $\pi^{-1}(x)\cap D_k(\Lambda,S)\geq 1$. This completes the proof.
\end{proof}

\begin{Rem}
Theorem \ref{hodgerigid} generalizes Huh's result in \cite{Huh} in the following sense: we do not require that, for every $y\in Y$, the isotropy algebra $\mathfrak{b}_y$ contain a regular element, a condition that does not hold in general.
\end{Rem}

The problem now reduces to finding finite log-resolutions of Schubert varieties. For Bott-Samelson resolutions, conditions {\rm (2)} and {\rm (3)} in Definition \ref{finiteresolution} always hold. In particular, spherical Bott-Samelson resolutions are finite log-resolutions.

\begin{Cor}
Let $\bar{w}$ be a reduced word for $w\in W^P$. If the corresponding Bott-Samelson variety $BS_{\bar{w}}$ is spherical, then each homogeneous component of $c_{SM}(X_P(w)^\circ)$ is Hodge rigid.
\end{Cor}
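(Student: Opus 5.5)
The corollary should follow from Theorem \ref{hodgerigid} once we show that the Bott--Samelson resolution $\pi:BS_{\bar w}\to X_P(w)$ is a finite log-resolution in the sense of Definition \ref{finiteresolution}. The plan is to check the three conditions in turn and then invoke the theorem.

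Recall that for a reduced word $\bar w=(s_{\alpha_1},\dots,s_{\alpha_\ell})$ the Bott--Samelson variety is
\[
BS_{\bar w}=P_{\alpha_1}\times^B P_{\alpha_2}\times^B\cdots\times^B P_{\alpha_\ell}/B ,
\]
with the map $[p_1,\dots,p_\ell]\mapsto p_1\cdots p_\ell P$. This map is proper, since $BS_{\bar w}$ is projective, and it is $B$-equivariant for left multiplication on the first factor. Its image is $X_P(w)$.

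\begin{enumerate}
\item Smoothness. $BS_{\bar w}$ is an iterated $\mathbb{P}^1$-bundle, hence smooth. Finiteness of $B$-orbits is exactly the sphericity hypothesis: a spherical variety for the Borel subgroup has an open $B$-orbit, and by the theorem of Brion and Vinberg it has only finitely many $B$-orbits. Condition (1) therefore holds.
\item Isomorphism over the cell. The preimage of $X^\circ=BwP/P$ is the open cell $Bs_{\alpha_1}B\times^B\cdots\times^B Bs_{\alpha_\ell}B/B$. Because the word is reduced, $Bs_{\alpha_1}B\cdots Bs_{\alpha_\ell}B=BwB$, and the map onto $BwB/B\cong U_w$ is bijective. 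Since $w\in W^P$ is minimal in its coset, $BwB/B\to BwP/P$ is also an isomorphism, so the composite is an isomorphism onto $X^\circ$. Over characteristic zero, a bijective morphism onto a smooth (normal) target is an isomorphism, which gives condition (2).
\item Normal crossings boundary. The complement of the open cell is $\bigcup_i D_i$, where $D_i=\{p_i\in B\}$. Each $D_i$ is isomorphic to the Bott--Samelson variety of the word with the $i$-th letter deleted, so it is smooth. In local coordinates on the affine charts given by products of root subgroups $U_{-\alpha_i}$, these divisors are coordinate hyperplanes, so $\bigcup_i D_i$ is a simple normal crossings divisor. This is condition (3).
\end{enumerate}

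With $Y=BS_{\bar w}$, Theorem \ref{hodgerigid} then applies directly and yields the corollary. The only step that is not purely formal is the appeal to finiteness of $B$-orbits from sphericity, and it is standard; conditions (2) and (3) are already asserted in the paper for all Bott--Samelson resolutions. If one wants to avoid citing that theorem, the fallback is a direct argument: $BS_{\bar w}$ is a toroidal $B$-embedding whose $B$-stable strata are the intersections $\bigcap_{i\in J}D_i$, each itself a smaller Bott--Samelson variety. Induction on $\ell$ then reduces the problem to the open $B$-orbits of the strata.
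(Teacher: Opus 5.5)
Your overall structure is the paper's. It notes that conditions (2) and (3) of Definition \ref{finiteresolution} hold for every Bott--Samelson resolution, lets sphericity supply (1), and applies Theorem \ref{hodgerigid}. Your checks of (2) and (3) are fine.

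The gap is your derivation of (1). The Brion--Vinberg theorem concerns a variety acted on by a connected \emph{reductive} group $H$ in which a Borel subgroup of $H$ has a dense orbit. It gives nothing for a variety that is merely acted on by the solvable group $B$ with a dense $B$-orbit.

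Moreover, that weaker property holds for \emph{every} Bott--Samelson variety of a reduced word. The big cell $Bs_{\alpha_1}B\times^B\cdots\times^B Bs_{\alpha_\ell}B/B$ maps $B$-equivariantly and isomorphically onto $BwB/B$, so it is a single $B$-orbit. Under your reading the hypothesis is therefore vacuous, and the conclusion you draw from it is false in general.

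For example, take the reduced word $s_1s_2s_1s_3s_2s_1$ of the longest element of $S_4$. The closed $B$-stable subvariety $D_2\cap D_4\cap D_5=\{p_2,p_4,p_5\in B\}$ is isomorphic to $BS_{(s_1,s_1,s_1)}\cong(\mathbb{P}^1)^3$. On it $B$ acts diagonally through a $2$-dimensional Borel subgroup of $PGL_2$, so it contains infinitely many $B$-orbits.

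Your fallback breaks for the same reason. It never uses the hypothesis. Also, when the complementary subword is non-reduced (as here), the open part of the stratum $\bigcap_{i\in J}D_i$ is not a single $B$-orbit, so the induction has nothing to reduce to.

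To close the gap, read sphericity with respect to a connected reductive group $L\supseteq T$ acting on $BS_{\bar w}$ by left multiplication, for instance $L=T$ (the toric case) or the Levi subgroup of $P_{\alpha_1}$. Require that its Borel subgroup $B_L=B\cap L$ has a dense orbit. Brion--Vinberg applied to $L$ then gives finitely many $B_L$-orbits. Since each $B$-orbit is a union of $B_L$-orbits, $BS_{\bar w}$ has finitely many $B$-orbits.

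Alternatively, read the hypothesis as directly asserting finiteness of $B$-orbits, which is all the paper actually uses. With (1) obtained in either way, the rest of your argument goes through.
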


\section{Cominuscule Schubert varieties}\label{Cominuscule}
In this section, we consider cominuscule Schubert varieties. A cominuscule homogeneous variety is one of the following:
\begin{itemize}
\item Grassmannians;
\item smooth quadric hypersurfaces;
\item Lagrangian Grassmannians;
\item spinor varieties;
\item the two exceptional cases: the Cayley plane and the Freudenthal variety.
\end{itemize}

\subsection{Grassmannians}

Consider the Grassmannian $G(k,n)$. Let $F_\bullet$ be a complete flag of subspaces in $V$, and let $\Sigma_a(F_\bullet)$ be the corresponding Schubert variety. Choose a basis $e_1,\dots,e_n$ of $V$ such that $F_d=\text{span}(e_1,...,e_d)$ for $1\leq d\leq n$. With respect to this basis, let $G=SL(n)\cong SL(V)$. Let $T$ and $B$ be the subgroups of diagonal matrices and upper triangular matrices, respectively. Let $P_k$ be the maximal parabolic subgroup consisting of block upper triangular matrices with blocks of size $k$ and $n-k$. Then $G(k,n)\cong G/P_k$ and $\Sigma_a(F_\bullet)\cong X_{P_k}(w)$, where $w_i=a_i$ for $1\leq i\leq k$. In particular, $\Sigma_a(F_\bullet)$ is naturally equipped with a left action of $B$.

Let $\Gamma:=\Sigma_{a^\alpha}(F_\bullet)$ be the Schubert variety in $F(1,2,\dots,k;n)$ defined with respect to the same flag $F_\bullet$, where $\alpha=(1,\dots,k)$. Then $\Gamma$ also carries a natural left action of $B$. Let $\pi:\Gamma\rightarrow \Sigma_a(F_\bullet)$ be the canonical projection. Clearly, $\pi$ is $B$-equivariant. We claim that $\pi$ is a finite log resolution.
\begin{Prop}
The morphism $\pi:\Gamma\rightarrow\Sigma_a(F_\bullet)$ is a finite log resolution.
\end{Prop}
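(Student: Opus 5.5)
The plan is to make $\Gamma$ explicit and check the three conditions of Definition \ref{finiteresolution} directly. Since $\alpha=(1,\dots,k)$, we have $\mu_{i,t}=\min(i,t)$. Taking $i=t$ gives $\dim(F_{a_t}\cap\Lambda_t)\ge t$, that is, $\Lambda_t\subset F_{a_t}$, and conversely these inclusions imply all the other inequalities. Hence
$$\Gamma=\{(\Lambda_1\subset\cdots\subset\Lambda_k)\mid \Lambda_i\subset F_{a_i},\ 1\le i\le k\}.$$

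\textbf{Smoothness, properness, equivariance, finitely many orbits.} Forgetting $\Lambda_k$, then $\Lambda_{k-1}$, and so on, exhibits $\Gamma$ as an iterated projective bundle. At stage $i$ the line $\Lambda_i/\Lambda_{i-1}$ is chosen in $\mathbb{P}(F_{a_i}/\Lambda_{i-1})$, which is a $\mathbb{P}^{a_i-i}$-bundle because $\Lambda_{i-1}\subset F_{a_{i-1}}\subset F_{a_i}$. So $\Gamma$ is smooth and projective, and $\pi$ is proper. It is $B$-equivariant because $B$ preserves $F_\bullet$. Moreover $\Gamma$ is a $B$-stable Schubert variety in $F(1,\dots,k;n)$, so the Bruhat decomposition shows it has finitely many $B$-orbits.

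\textbf{Isomorphism over the cell.} I will show that
$$\pi^{-1}(\Sigma_a^\circ)=\{\Lambda_i\not\subset F_{a_i-1}\ \text{for all } i\}.$$
If $\Lambda=\Lambda_k$ lies in the cell, then $\dim(\Lambda\cap F_{a_i})=i$ exactly. The constraints $\Lambda_i\subset F_{a_i}\cap\Lambda$ then force $\Lambda_i=\Lambda\cap F_{a_i}$. So the fiber over each cell point is a single point, and $\pi^{-1}(\Sigma_a^\circ)\to\Sigma_a^\circ$ has the inverse $\Lambda\mapsto(\Lambda\cap F_{a_i})_i$, which is a morphism on the cell since the ranks are constant there.

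For the reverse inclusion, suppose $\Lambda_i\not\subset F_{a_i-1}$ for every $i$. Then $\Lambda_i\cap F_{a_i-1}$ is a hyperplane of $\Lambda_i$, and it contains $\Lambda_{i-1}$, so it equals $\Lambda_{i-1}$. Now suppose $\Lambda\cap F_{a_i}\supsetneq\Lambda_i$. Pick $m>i$ minimal with some $v\in(\Lambda_m\cap F_{a_i})\setminus\Lambda_{m-1}$. Then $\Lambda_m=\Lambda_{m-1}+kv\subset F_{a_m-1}$, which is a contradiction. This shows that the jumps of $\dim(\Lambda\cap F_j)$ occur exactly at the $a_i$, so $\Lambda$ lies in the cell.

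\textbf{SNC boundary.} The complement of $\pi^{-1}(\Sigma_a^\circ)$ is $\bigcup_i D_i$, where $D_i=\{\Lambda_i\subset F_{a_i-1}\}$; here $D_i=\emptyset$ when $a_i=i$. Let $L_i=\Lambda_i/\Lambda_{i-1}$ be the tautological line at stage $i$. The composite $\Lambda_i\to F_{a_i}/F_{a_i-1}$ kills $\Lambda_{i-1}$, so it defines a section $s_i$ of $L_i^\vee$ (twisted by the trivial line $F_{a_i}/F_{a_i-1}$). Its zero scheme is $D_i$.

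On the stage-$i$ fiber $\mathbb{P}(F_{a_i}/\Lambda_{i-1})$, the section $s_i$ is a nonzero linear form, so $D_i$ restricts to a hyperplane sub-bundle $\mathbb{P}(F_{a_i-1}/\Lambda_{i-1})$. Each $D_i$ is therefore a smooth irreducible divisor. To get normal crossings, I work in the standard local trivializations of the tower: at a point of $D_{i_1}\cap\cdots\cap D_{i_r}$, take affine fiber coordinates at each stage. Then $s_{i}$ is a coordinate of the stage-$i$ fiber plus terms depending only on the earlier stages, so the Jacobian of $(s_{i_1},\dots,s_{i_r})$ is triangular with nonzero diagonal. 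This gives transversality, hence simple normal crossings.

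I expect this last step to be the main technical point: one must make sure the triangular structure persists over the loci where the earlier $\Lambda_j$ already lie in smaller flag pieces. Everything else is bookkeeping.
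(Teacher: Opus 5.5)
Your proposal is correct and follows essentially the same route as the paper. The shared steps are the iterated $\mathbb{P}^{a_i-i}$-bundle tower for smoothness, the unique preimage $\Lambda_i=\Lambda\cap F_{a_i}$ over the cell, the Bruhat decomposition for finiteness of orbits, and an inductive SNC argument in which the new boundary component at stage $t$ is the hyperplane sub-bundle $\{\Lambda_t\subset F_{a_t-1}\}$; your explicit description of $\pi^{-1}(\Sigma_a^\circ)$, the sections $s_i$, and the triangular Jacobian supply details the paper leaves implicit. The worry you flag at the end does not arise, because $\Lambda_{i-1}\subset F_{a_{i-1}}\subset F_{a_i-1}$ holds on all of $\Gamma$, so $\mathbb{P}(F_{a_i-1}/\Lambda_{i-1})$ is a corank-one sub-bundle everywhere and $s_i$ can be taken to be an honest fiber coordinate in a suitable local trivialization.
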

\begin{proof}
    \underline{Smoothness:} Let $$\pi_{\leq t}:F(1,...,k;n)\rightarrow F(1,...,t;n)$$ be the projection onto the first $t$ coordinates, for $1\le t\le k$. Note that $\pi_{\le 1}(\Gamma)\cong \mathbb{P}(F_{a_1})$, and for $2\le t\le k$, the variety $\pi_{\le t}(\Gamma)$ is a Grassmannian bundle over $\pi_{\le t-1}(\Gamma)$ with fibers isomorphic to $G(1,a_t-(t-1))$. Therefore, $\Gamma$ is smooth.
    
    \underline{Isomorphism over the Schubert cell:} Let $\Lambda\in \Sigma_a(F_\bullet)$ be a general point. Then $\pi^{-1}(\Lambda)$ consists of the unique point determined by the relations $\Lambda_t=\Lambda\cap F_{a_t}$.

    \underline{Finiteness of $B$-orbits:} Since $\Gamma$ is a Schubert variety in $F(1,\dots,k;n)$, the finiteness of $B$-orbits follows from the Bruhat decomposition of $F(1,\dots,k;n)$.

    \underline{SNC divisor:} Let $D:=\pi^{-1}(\partial\Sigma_{a})$ and $D_t:=\pi_{\leq t}(D)$, $1\leq t\leq k$. Note that $\pi_{\le t}(\Gamma)$ is a Schubert variety in $F(1,\dots,t;n)$ and that $D_t$ coincides with $\partial \pi_{\le t}(\Gamma)$. We use induction on $t$. When $t=1$, we have $D_1\cong \mathbb{P}^{a_1-2}$, which is smooth and irreducible. For $2\le t\le k$,
    $$D_t=\pi_{\leq t}(\pi^{-1}_{\leq t-1}(D_{t-1}))\cup\tilde{D_t}, $$
    where $\tilde{D_t}$ is a projective bundle over $\pi_{\leq t-1}(\Gamma)$ modeled on $\mathbb{P}^{a_t-t-1}$. It follows that $D_t$ is an SNC divisor in $\pi_{\leq t}(\Gamma)$ provided that $D_{t-1}$ is a SNC divisor in $\pi_{\leq t-1}(\Gamma)$. By induction, we conclude that $D$ is a SNC divisor in $\Gamma$.
\end{proof}

\subsection{Smooth quadric hypersurfaces}Let $Q$ be a smooth quadric hypersurface in $\mathbb{P}(V)$. Let $F_1\subset...\subset F_{\left[n/2\right]}$ be an isotropic flag. Then the Schubert varieties of $Q$ are $\Sigma_{a;}\cong\mathbb{P}(F_a)$, $1\leq a\leq \left[\frac{n}{2}\right]$, and $\Sigma_{;b}\cong\mathbb{P}(F_b^\perp)\cap Q$, $0\leq b\leq \left[\frac{n}{2}\right]-1$. Since there is only one Schubert class in each degree, except in the middle degree when $n$ is even, we may construct irreducible representatives of the CSM classes directly. We first recall the following lemma on the flexibility of Schubert classes.

\begin{Lem}\cite[Corollary 3.4]{Flexibility}\label{flex}
    Let $\sigma$ be a Schubert class on a smooth quadric hypersurface. Assume $\sigma$ is not the fundamental class, the class of a point, or the class of a maximal isotropic subspace when $n$ is even. Then $m\sigma$ can be represented by an irreducible subvariety for every $m\geq 1$.
\end{Lem}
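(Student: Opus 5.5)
The statement is \cite[Corollary 3.4]{Flexibility}; I only sketch how it would be proved. The approach uses the explicit geometry of Schubert cycles on a smooth quadric $Q\subset\mathbb{P}(V)$, with $\dim Q=n-2$, together with intersections with general linear spaces.

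The first step identifies the classes. In codimension $c$ with $0<c<\frac{n-2}{2}$, the Schubert class is $h^c$, the class of $\Sigma_{;c}\cong\mathbb{P}(F_c^\perp)\cap Q$. This is a linear section $Q\cap L$ with $L$ of codimension $c$. In dimension $d$ with $0<d<\frac{n-2}{2}$, the class is that of a linear space $\mathbb{P}^d=\mathbb{P}(F_{d+1})\subset Q$. When $n$ is even there are, in addition, the two classes of maximal isotropic spaces in the middle dimension; these are excluded from the statement.

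For the classes $h^c$ (codimension below the middle), $m h^c$ is the class of $Q\cap L\cap Y$, where $Y$ is a general hypersurface of degree $m$ in a codimension $c-1$ linear space. The condition $c<\frac{n-2}{2}$ keeps $Q\cap L'$, with $L'$ of codimension $c-1$, of dimension at least $2$ and a smooth quadric or a cone with small vertex. By Bertini, a general degree $m$ hypersurface section of it is then irreducible, and its class is $m h^{c}$. The middle degree needs a separate remark: if $n$ is odd, the middle class is $h^{(n-2)/2}$ up to a factor of $2$, and one handles it similarly.

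For the linear classes $[\mathbb{P}^d]$ with $1\le d<\frac{n-2}{2}$, take a $(d+1)$-dimensional linear subspace $M\subset\mathbb{P}(V)$ with $Q\cap M$ a quadric cone with vertex $\mathbb{P}^{d-1}$ over a smooth conic. Inside this cone, the cone over a rational curve of degree $m$ on the conic, i.e. over a degree-$2m$ curve meeting the conic's point class, has class $m[\mathbb{P}^d]$; concretely it is the cone over the image of a degree-$m$ map $\mathbb{P}^1\to$ conic, which is irreducible and reduced if chosen birationally onto its image. An irreducible cone cannot be degree $m$ times one line, so one should use the cone with vertex $\mathbb{P}^{d-2}$ over a surface of type $\mathbb{P}^1\times\mathbb{P}^1$ quadric, and a curve of bidegree $(m,0)$ deformed to $(m,1)$-type. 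This is the delicate point, and it uses $d<\frac{n-2}{2}$ to guarantee a rank-$4$ quadric cone section of the right dimension.

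The main obstacle is exactly this last construction: realizing multiples of linear-space classes by irreducible, rather than merely effective, cycles. Lines and points have no such multiple representation, and middle-dimensional planes are rigid; this is why these cases are excluded. I would try the cone construction first, and fall back on cones over irreducible curves in the Segre-type quadric surface, checking classes by intersection with $h^d$ and with the opposite ruling.
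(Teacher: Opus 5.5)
The paper gives no argument for this lemma; it imports it from \cite[Corollary 3.4]{Flexibility}. Your sketch therefore has to stand on its own.

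The $h^c$ part is fine. For $1\le c<\frac{n-2}{2}$, a general $Q\cap L'$ with $L'$ of codimension $c-1$ is a smooth quadric of dimension $\ge 2$. Bertini then makes a general degree-$m$ section irreducible, of class $mh^c$. Two slips here: you wrote $Q\cap L\cap Y$ with $L$ of codimension $c$, which has the wrong dimension and should be $Q\cap L'\cap Y$. Also, the ``middle class'' remark for $n$ odd is vacuous: then $\frac{n-2}{2}\notin\mathbb{Z}$, and your two ranges already exhaust all classes.

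The genuine gap is the linear-space case, which you yourself flag as the crux. Neither construction you write down works as stated.

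\emph{First construction.} A cone with vertex $\mathbb{P}^{d-1}$ over a smooth conic spans a $\mathbb{P}^{d+2}$, not a $\mathbb{P}^{d+1}$, and is $(d+1)$-dimensional. A degree-$m$ map $\mathbb{P}^1\to$ conic has the whole conic as image, so ``the cone over it'' is the whole cone, not a $d$-cycle.

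\emph{Fallback.} A curve of bidegree $(m,1)$ on $\mathbb{P}^1\times\mathbb{P}^1$ has degree $m+1$, so its cone has class $(m+1)[\mathbb{P}^d]$. Moreover, bidegree is invariant under deformation, so ``$(m,0)$ deformed to $(m,1)$'' is not meaningful.

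\emph{A correct version.} Choose an isotropic $U\subset V$ with $\dim U=d-1$. Choose a $4$-dimensional $W\subset U^\perp$ with $W\cap U=0$ and $q|_W$ nondegenerate; this is possible iff $n-2(d-1)\ge 4$, i.e.\ $d\le\frac{n-2}{2}$. Then $Q\cap\mathbb{P}(U\oplus W)$ is the cone with vertex $\mathbb{P}(U)$ over $S=Q\cap\mathbb{P}(W)\cong\mathbb{P}^1\times\mathbb{P}^1$. Take a smooth curve $C\subset S$ of bidegree $(1,m-1)$. The join of $\mathbb{P}(U)$ and $C$ lies in $Q$, is irreducible of dimension $d$, and has degree $m$; no deformation is needed.

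\emph{Role of the inequality.} The \emph{strict} inequality $d<\frac{n-2}{2}$ is not what guarantees the rank-$4$ section; that needs only $d\le\frac{n-2}{2}$. The strict inequality gives $A_d(Q)=\mathbb{Z}[\mathbb{P}^d]$, so degree $m$ forces class $m[\mathbb{P}^d]$. For $d=\frac{n-2}{2}$ with $n$ even, the same join has class $[\Pi]+(m-1)[\Pi']$, with $\Pi,\Pi'$ maximal linear spaces from the two families. This is exactly why those classes are excluded.

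\emph{Lines.} Your closing claim that lines admit no irreducible multiple representatives and are therefore excluded is false. The lemma covers the line class whenever $\dim Q\ge 3$. Indeed, $m[\text{line}]$ is represented by the $d=1$ case above (empty vertex): a smooth rational curve of bidegree $(1,m-1)$ on a smooth quadric surface section of $Q$.
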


\begin{Prop}
 Let $\Sigma$ be a Schubert variety in $Q$. For each $0\le k\le \dim(\Sigma)$, there exists an irreducible subvariety of $\Sigma$ representing $c_{SM}(\Sigma^\circ)_k$.
\end{Prop}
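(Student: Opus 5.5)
The Schubert varieties of $Q$ are either linear spaces $\Sigma_{a;}\cong\mathbb{P}(F_a)$ or linear sections $\Sigma_{;b}\cong\mathbb{P}(F_b^\perp)\cap Q$. Their cells have simple inclusion--exclusion descriptions, so I would compute $c_{SM}(\Sigma^\circ)$ directly and exhibit irreducible representatives degree by degree, rather than build a finite log resolution. Alternatively, for $\Sigma_{a;}$ one may simply invoke Theorem \ref{hodgerigid} with the $B$-orbit structure of a projective space.

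\textbf{Case $\Sigma=\Sigma_{a;}=\mathbb{P}(F_a)$.} The cell is $\Sigma^\circ=\mathbb{P}(F_a)\setminus\mathbb{P}(F_{a-1})\cong\mathbb{A}^{a-1}$. The standard computation (e.g.\ via the resolution by the identity, with boundary divisor the hyperplane $\mathbb{P}(F_{a-1})$) gives
$$c_{SM}(\Sigma^\circ)=(1+H)^{a}-H(1+H)^{a-1}\cdots$$
More cleanly, $c(T_{\mathbb{P}^{a-1}})/(1+H)=(1+H)^{a-1}$. Hence
$$c_{SM}(\Sigma^\circ)_k=\binom{a-1}{k}[\mathbb{P}^k].$$
Every multiple $m[\mathbb{P}^k]$ in $\mathbb{P}^{a-1}$ is represented by an irreducible subvariety inside $\mathbb{P}(F_a)$, namely a degree $m$ hypersurface in a $\mathbb{P}^{k+1}$ (or $\mathbb{P}^k$ itself if $m=1$, or a point). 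These cycles lie in $\mathbb{P}(F_a)\subset Q$, so they represent the required class.

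\textbf{Case $\Sigma=\Sigma_{;b}=\mathbb{P}(F_b^\perp)\cap Q$.} This is a quadric cone of dimension $n-2-b$ with vertex $\mathbb{P}(F_b)$. Its cell is the complement of the next smaller Schubert variety. I would compute $c_{SM}(\Sigma^\circ)$ by additivity:
$$c_{SM}(\Sigma^\circ)=c_{SM}(\Sigma)-c_{SM}(\partial\Sigma).$$
This can be done either using known CSM classes of quadric cones (the cone over a smooth quadric $Q'$ is handled by Euler characteristics and the cone formula), or more simply by the Aluffi--Mihalcea algorithm. In each degree $k$, $A_k(Q)$ has rank one except in the middle degree for $n$ even. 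So the result is $c_{SM}(\Sigma^\circ)_k=m_k\sigma_k$ with $m_k\geq 0$, using positivity of CSM classes of Schubert cells. In the middle degree for $n$ even, a priori it is $m\sigma+m'\sigma'$. I expect $m_k>0$ throughout, since the Euler characteristic of the cell is $1$ and the top term is $[\Sigma]$.

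\textbf{Realizing each term.} For $k=\dim\Sigma$ the term is $[\Sigma]$ itself, and for $k=0$ it is $\chi(\Sigma^\circ)[pt]=[pt]$. For intermediate $k$ not in the exceptional middle degree, Lemma \ref{flex} gives an irreducible representative of $m_k\sigma_k$. This representative must lie in $\Sigma$. Since the representatives in \cite{Flexibility} are built from linear sections of quadrics and cones, I would take them inside the subquadric $\Sigma$ itself, applying Lemma \ref{flex} to $\Sigma$ viewed as a cone over a smaller smooth quadric, or use general linear sections of $\Sigma$ where the class is a multiple of $h^j\cap[\Sigma]$.

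\textbf{Main obstacle.} The hardest part is the middle-dimensional degree when $n$ is even, where Lemma \ref{flex} fails for maximal isotropic classes. There the class may be $m\sigma+m'\sigma'$ with both $m,m'$ positive. I would first check whether this class is a multiple of $h^{j}\cap[\Sigma]$ (that is, $m=m'$). If so, a general linear section of an appropriate irreducible subvariety works. Otherwise, when only one ruling appears with coefficient $1$, a single linear space $\mathbb{P}(F_{n/2})$ or $F_{n/2-1}^\perp$ suffices. I would verify by explicit computation which situation occurs in each case.
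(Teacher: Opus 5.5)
Your treatment of $\Sigma_{a;}$ is fine: the formula $c_{SM}(\Sigma^\circ)_k=\binom{a-1}{k}[\mathbb{P}^k]$ is correct, and a degree-$m$ hypersurface in a $\mathbb{P}^{k+1}\subset\mathbb{P}(F_a)$ represents the class. Your handling of the degrees $0$ and $\dim\Sigma$, and of the non-middle degrees via Lemma \ref{flex}, follows the paper.

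The gap is where you yourself locate the main obstacle: the middle dimension $n/2-1$ when $n$ is even. You do not show that the component $m\sigma_{n/2;}+m'\sigma_{;n/2-1}$ has $m=m'$. Your two alternatives do not exhaust the possibilities. A class such as $2\sigma_{n/2;}+\sigma_{;n/2-1}$ would be covered by neither, and ``explicit computation'' carries the whole case, so that degree is not proved.

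The missing idea is a symmetry argument, which is what the paper uses. Take an isometry $\gamma$ of $(V,q)$ of determinant $-1$ that fixes $F_1,\dots,F_{n/2-1}$ and exchanges the two maximal isotropic subspaces containing $F_{n/2-1}$. For example, swap $e_{n/2}$ and $e_{n/2+1}$ in a hyperbolic basis with $q(e_i,e_{n+1-i})=1$.

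Suppose $\dim\Sigma>n/2-1$, so $\Sigma=\Sigma_{;b}$ with $b\le n/2-2$. Then $\gamma$ preserves $\Sigma$ and its cell, since both are cut out by $F_b$, $F_{b+1}$ and their orthogonal complements. By functoriality, $\gamma_*c_{SM}(\Sigma^\circ)=c_{SM}(\Sigma^\circ)$. On the other hand, $\gamma_*$ interchanges $\sigma_{n/2;}$ and $\sigma_{;n/2-1}$, so $m=m'$. The class $m(\sigma_{n/2;}+\sigma_{;n/2-1})$ is then represented by the intersection of $\Sigma_{;n/2-2}\subset\Sigma$ with a general hypersurface of degree $m$, which is irreducible by Bertini.

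If instead $\dim\Sigma=n/2-1$, the middle component is just $[\Sigma]$. This is the only situation in which your ``single ruling with coefficient $1$'' case actually occurs.

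Separately, you only \emph{expect} $m_k>0$. Nonvanishing is needed both for Lemma \ref{flex} to apply and for any irreducible representative to exist, so it also needs a justification. The paper's argument uses this implicitly as well.
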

\begin{proof}
    For $k\neq 0,\frac{n}{2},n-2$, the statement follows from Lemma \ref{flex}.

    For $k=0$, note that $\chi(\Sigma^\circ)=1$, and hence $c_{SM}(\Sigma^\circ)_0=\sigma_{1;}$, which can be represented by the Schubert variety $\Sigma_{1;}$.

    For $k=n-2$ and $\Sigma=Q$, the statement is trivial, since the coefficient of the top-dimensional component in $c_{SM}(\Sigma^\circ)$ is always $1$.

    Now assume that $n$ is even and $k=\frac{n}{2}$. Let $\gamma$ be an involution that interchanges the two irreducible components of the space of maximal isotropic subspaces. Then $\gamma_*$ interchanges the classes $\sigma_{n/2;}$ and $\sigma_{;n/2-1}$, while fixing all other Schubert classes. By the functoriality of CSM classes, the coefficients of $\sigma_{n/2;}$ and $\sigma_{;n/2-1}$ in $c_{SM}(\Sigma^\circ)$ must therefore be equal. Hence it suffices to construct an irreducible subvariety representing the class $m(\sigma_{n/2;}+\sigma_{;n/2-1})$ for any $m\ge 1$. For example, this class can be represented by the intersection of $Q$ with a general hypersurface of degree $m$ contained in $\mathbb{P}(F_{n/2-2}^\perp)$.
    
\end{proof}

\subsection{Lagrangian Grassmannians}

Consider the Lagrangian Grassmannian $SG(k,2k)$ with $n=2k$. Let $F_1\subset...\subset F_{\left[n/2\right]}$ be an isotropic flag in $V$, and let $\Sigma_{a;b}(F_\bullet)$ be the corresponding Schubert variety. Choose a basis $e_1,...,e_n$ of $V$ such that $F_i=\text{span}(e_1,...,e_i)$ for $1\leq i\leq k$ and $F_j^\perp=\text{span}(e_1,...,e_{2k-j})$ for $0\leq j\leq k-1$. With respect to this basis, let $G=Sp(n)\cong Sp(V)$. Let $T$ and $B$ be the subgroups of diagonal matrices and upper triangular matrices in $G$, respectively.

Note that for a Schubert index $(a;b)$, the sequence $b$ is uniquely determined by $a$ together with the condition $a_i\neq b_j$ for all $i,j$. Let $\Gamma$ be the Schubert variety $\Sigma_{a^\alpha;b^\beta}(F_\bullet)$ in $SF(1,2,\dots,s,k;n)$, where $\alpha=(1,\dots,s)$ and $\beta=(s+1,\dots,s+1)$. Then $\Gamma$ is naturally equipped with a left action of $B$, and the canonical projection $\pi:\Gamma\rightarrow \Sigma_{a;b}(F_\bullet)$ is $B$-equivariant. We claim that $\pi$ is a finite log resolution.
\begin{Prop}\label{LG}
The morphism $\pi:\Gamma\rightarrow\Sigma_{a;b}(F_\bullet)$ is a finite log resolution.
\end{Prop}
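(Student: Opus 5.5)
I will follow the same four-part template as the Grassmannian case (smoothness, isomorphism over the cell, finiteness of $B$-orbits, SNC boundary). The only new feature is the last step of the tower, which is no longer a projective space but a Lagrangian Grassmannian.

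\underline{Structure of $\Gamma$.} A point of $\Gamma$ is a flag $\Lambda_1\subset\cdots\subset\Lambda_s\subset\Lambda$, where $\Lambda_t\subset F_{a_t}$ has dimension $t$ and $\Lambda$ is Lagrangian. Consider the projections $\pi_{\le t}:SF(1,\dots,s,k;n)\to SF(1,\dots,t;n)$ for $t\le s$.
\begin{itemize}
\item $\pi_{\le 1}(\Gamma)\cong\mathbb{P}(F_{a_1})$.
\item For $2\le t\le s$, the map $\pi_{\le t}(\Gamma)\to\pi_{\le t-1}(\Gamma)$ is a $\mathbb{P}^{a_t-t}$-bundle, with fibre $\mathbb{P}(F_{a_t}/\Lambda_{t-1})$. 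Isotropy is automatic here, since $F_{a_t}$ is isotropic.
\item The last step $\Gamma\to\pi_{\le s}(\Gamma)$ sends the flag to $\Lambda_s$. Its fibre is the set of Lagrangians $\Lambda\supset\Lambda_s$ satisfying the conditions $\dim(\Lambda\cap F_{b_j}^\perp)\ge k-j+1$. Passing to $\Lambda/\Lambda_s\subset\Lambda_s^\perp/\Lambda_s$, a symplectic space of dimension $2(k-s)$, these become Schubert conditions relative to the flag induced by $F_\bullet$.
\end{itemize}
For the last step I would check that the induced conditions cut out all of $LG(k-s,2(k-s))$ generically. The reason is that $b$ is the complement of $a$ in $\{0,\dots,k-1\}$, up to the shift by one. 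So the $\beta$-part imposes no condition beyond what containing $\Lambda_s$ already forces. If this holds, $\Gamma$ is an iterated bundle with smooth fibres ($\mathbb{P}$'s, then a Lagrangian Grassmannian), hence smooth.

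\underline{Isomorphism over the cell.} For $\Lambda\in\Sigma^\circ_{a;b}$ the flag is forced: $\Lambda_t=\Lambda\cap F_{a_t}$, since these intersections have exactly dimension $t$. So $\pi^{-1}(\Lambda)$ is a single point. Because $\Sigma_{a;b}$ is normal (a Schubert variety) and $\pi$ is birational and bijective over the cell, Zariski's main theorem gives an isomorphism $\pi^{-1}(\Sigma^\circ)\to\Sigma^\circ$.

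\underline{Finiteness of $B$-orbits.} $\Gamma$ is a $B$-stable Schubert variety in $Sp(n)/P$, so this follows from the Bruhat decomposition.

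\underline{SNC boundary.} This is the step I expect to be the main obstacle. Let $D=\Gamma\setminus\pi^{-1}(\Sigma^\circ)$. I would describe $D$ as the union of the following divisors:
\begin{itemize}
\item $E_t=\{\Lambda_t\subset F_{a_t-1}\}$ for those $t$ with $a_t-1\ne a_{t-1}$ (when $a_t-1=a_{t-1}$ the condition is vacuous or non-divisorial);
\item the loci where $\Lambda$ meets some $F_{b_j}^\perp$ (equivalently $F_{a}$) in excess dimension, which are Schubert divisors of the fibre $LG(k-s,2(k-s))$, pulled back.
\end{itemize}
The first type are the pullbacks of hyperplane subbundles $\mathbb{P}(F_{a_t-1}/\Lambda_{t-1})$. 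These are smooth, and they meet transversally by the same induction on $t$ as in the Grassmannian proof.

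The second type is the problem, because the boundary of the open cell in a Lagrangian Grassmannian is in general not a normal crossings divisor. The good case is when the fibre condition is trivial, i.e. the boundary meets the fibre only in a single smooth hyperplane-section divisor, $\{\Lambda\cap(\text{fixed Lagrangian})\ne0\}$ restricted suitably. My first attempt would be to show that the relevant extra divisor is $\{\Lambda\cap F_{k}/\Lambda_s\ne 0\}$ intersected with the previous strata, and to check in local coordinates (symmetric matrices on the big cell) that together with the $E_t$ it is smooth and transverse.

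If that fails, the fallback is to refine $\Gamma$ by adding more isotropic steps until every fibre is a projective space, which would reduce the proof to the Grassmannian argument verbatim. This is presumably why the statement is arranged with $\beta=(s+1,\dots,s+1)$.
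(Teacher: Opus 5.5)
There is a genuine gap, and it is exactly the one you flagged. The SNC step is never carried out, and your first attempt cannot succeed once $k-s\ge 2$.

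Your set-up is correct. Since $\{a_i\}\sqcup\{b_j+1\}=\{1,\dots,k\}$, the $F_{b_j}^\perp$-conditions follow from the $F_{a_i}$-conditions. So $\Gamma\to\pi_{\le s}(\Gamma)$ is a bundle with fibre $LG(k-s,\Lambda_s^\perp/\Lambda_s)$. The $B$-orbit requires $\dim(\Lambda\cap F_k)=s$, so the boundary is $D=\bigcup_t E_t\cup E$ with $E=\{\dim(\Lambda\cap F_k)\ge s+1\}$.

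One correction: keep $E_t$ also when $a_t-1=a_{t-1}\ge t$. For $k=3$, $a=(2,3)$, the locus $\{\Lambda_2=F_2\}$ is a boundary divisor.

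The problem is $E$ itself. It is a locally trivial bundle over $\pi_{\le s}(\Gamma)$ whose fibre is the Schubert divisor $\{L : L\cap(F_k/\Lambda_s)\ne 0\}$ of $LG(k-s,2(k-s))$. In symmetric-matrix coordinates centred at the Lagrangian $F_k/\Lambda_s$, this divisor is $\{\det A=0\}$. It is singular along $\{\operatorname{rank}A\le k-s-2\}$, which contains $0$ as soon as $k-s\ge 2$. So for $k-s\ge 2$ the divisor $D$ has a singular irreducible component and is not SNC, whatever local computation you do.

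The smallest cases are:
\begin{itemize}
\item $k=2$, $s=0$: here $\Gamma=\Sigma=LG(2,4)\cong Q^3$, and $D$ is a tangent hyperplane section, i.e.\ a quadric cone.
\item $k=3$, $a=(3)$: here $E$ is fibrewise a quadric cone over $\mathbb{P}(F_3)$.
\end{itemize}
For this $\pi$ the proposition is therefore true only when $k-s\le 1$. Then the last fibre is $\mathbb{P}^1$ (or a point), $E$ is the section $\{\Lambda=F_k\}$, and $D$ is locally a union of coordinate hyperplanes. In that range your argument goes through.

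The paper's proof does not supply the missing step. It finishes by asserting $D=\pi_{\le s}^{-1}(D_s)$, i.e.\ that no boundary component comes from the Lagrangian fibre. This omits exactly your $E$. It already fails for $k=2$, $a=(2)$, where $E=\{\Lambda=F_2\}$ is a section rather than a pull-back, although there $D$ happens to be SNC. For $s=0$ the paper checks only smoothness. So your doubt points to a real error in the published argument, not just a weakness in yours.

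Your fallback does not rescue the statement either. It proves something about a different map. It is also not a verbatim copy of the Grassmannian case. To remain an isomorphism over the cell, the new steps must be determined by $\Lambda$ on the cell. Isotropy then couples the steps, so boundary components are no longer hyperplane sub-bundles of single projective steps.

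For example, for $LG(2,4)$ the natural refinement $\Lambda_1=\Lambda\cap F_1^\perp$ is the blow-up along the line $\{\Lambda\supset F_1\}$. Its two boundary components are $\{\Lambda_1\subset F_2\}$ and the exceptional divisor. Both are smooth, but they meet along two curves crossing at $(F_1,F_2)$, so the boundary is still not SNC.
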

\begin{proof}
    \underline{Smoothness:} If $s=|a|=0$, then $\Sigma_{a;b}(F_\bullet)=SG(k,2k)$ is already smooth. Now assume that $s\ge 1$. Let $\pi_{\leq t}:SF(1,...,s,k;n)\rightarrow F(1,...,t;n)$ be the projection onto the first $t$ coordinates, $1\le t\le s$. Note that $\pi_{\le 1}(\Gamma)\cong \mathbb{P}(F_{a_1})$, and for $2\le t\le s$, the variety $\pi_{\le t}(\Gamma)$ is a Grassmannian bundle over $\pi_{\le t-1}(\Gamma)$ with fibers isomorphic to $G(1,a_t-(t-1))$. Moreover, $\Gamma$ is a projective bundle over $\pi_{\le s}(\Gamma)$ with fibers isomorphic to $SG(k-s,n-2s)$. Therefore, $\Gamma$ is smooth.
    
    \underline{Isomorphism over the Schubert cell:}
Let $\Lambda\in \Sigma_{a;b}(F_\bullet)$ be a general point. Then $\pi^{-1}(\Lambda)$ consists of the unique point determined by the relations $\Lambda_t=\Lambda\cap F_{a_t}$, $1\leq t\leq s$.

    \underline{Finiteness of $B$-orbits:} Since $\Gamma$ is a Schubert variety in $SF(1,\dots,s,k;n)$, the finiteness of $B$-orbits follows from the Bruhat decomposition of $SF(1,\dots,s,k;n)$.

    \underline{SNC divisor:} Let $D:=\pi^{-1}(\partial\Sigma_{a;b})$ and $D_t:=\pi_{\leq t}(D)$, $1\leq t\leq s$. Note that $\pi_{\le t}(\Gamma)$ is a Schubert variety in $F(1,\dots,t;n)$ and that $D_t$ coincides with $\partial \pi_{\le t}(\Gamma)$. We use induction on $t$. When $t=1$, we have $D_1\cong\mathbb{P}^{a_1-2}$ is smooth and irreducible. For $2\leq t\leq s$,
    $$D_{t}=\pi_{\leq t}(\pi^{-1}_{\leq t-1}(D_{t-1}))\cup\tilde{D_t}, $$
    where $\widetilde{D}_t$ is a projective bundle over $\pi_{\le t-1}(\Gamma)$ modeled on $\mathbb{P}^{a_t-t-1}$. Therefore, $D_t$ is an SNC divisor in $\pi_{\le t}(\Gamma)$ if $D_{t-1}$ is an SNC divisor in $\pi_{\le t-1}(\Gamma)$. By induction, $D_s$ is an SNC divisor in $\pi_{\le s}(\Gamma)$. Also note that
    $$D=\pi_{\leq s}^{-1}(D_s),$$
    and therefore $D$ is an SNC divisor in $\Gamma$.
\end{proof}

\subsection{Spinor varieties}

The same construction for Lagrangian Grassmannians also works for Spinor varieties. Let $\Sigma_{a;b}(F_\bullet)$ be a Schubert variety in $OG(k,2k)$. Note that $b$ is uniquely determined by $a$ together with the condition $a_i\neq b_j$ for all $i,j$. Let $\Gamma$ be the Schubert variety $\Sigma_{a^\alpha;b^\beta}(F_\bullet)$ in $OF(1,2,\dots,s,k;n)$, where $\alpha=(1,\dots,s)$ and $\beta=(s+1,\dots,s+1)$.
\begin{Prop}\label{spinor}
The canonical projection $\pi:\Gamma\rightarrow\Sigma_{a;b}(F_\bullet)$ is a finite log resolution.
\end{Prop}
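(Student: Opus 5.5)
We follow the proof of Proposition \ref{LG} essentially verbatim and check the four conditions of Definition \ref{finiteresolution} for $\pi:\Gamma\rightarrow\Sigma_{a;b}(F_\bullet)$. Here $\Gamma$ is the Schubert variety in $OF(1,\dots,s,k;n)$ with $n=2k$, $\alpha=(1,\dots,s)$ and $\beta=(s+1,\dots,s+1)$. We fix a basis adapted to the isotropic flag and take $G=SO(n)$ with $T,B$ the diagonal and upper triangular subgroups. Then $\Gamma$ is $B$-stable, $\pi$ is $B$-equivariant, and $\pi$ is proper because it is the restriction of a projection of projective varieties.

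\emph{Smoothness.} If $s=0$, then $\Sigma_{a;b}=OG(k,2k)$ and there is nothing to prove. For $s\ge 1$, let $\pi_{\le t}$ be the projection to the first $t$ steps. Then $\pi_{\le 1}(\Gamma)\cong\mathbb{P}(F_{a_1})$, and $\pi_{\le t}(\Gamma)\to\pi_{\le t-1}(\Gamma)$ is a $\mathbb{P}^{a_t-t}$-bundle, with fiber $\mathbb{P}(F_{a_t}/\Lambda_{t-1})$. These spaces are automatically isotropic because $a_t\le k$. Finally, $\Gamma\to\pi_{\le s}(\Gamma)$ is a bundle whose fiber over $\Lambda_s$ is the component of $OG(k-s,\Lambda_s^\perp/\Lambda_s)\cong OG(k-s,2k-2s)$ selected by the parity condition. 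This is a spinor variety, hence smooth. The main point to verify is that the incidence conditions coming from $b$ impose no further constraint on this fiber. This holds because $b$ is determined by $a$: the conditions $\dim(\Lambda\cap F_{b_j}^\perp)\ge k-j+1$ follow from $\Lambda\supset\Lambda_s$ together with isotropy and the fixed component. I expect this to be the main obstacle, since one must check it carefully against the parity and component conventions, including the abuse of notation for $F^\perp_{n/2-1}$.

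\emph{Isomorphism over the cell, and finiteness of orbits.} For $\Lambda\in\Sigma^\circ_{a;b}$, the fiber $\pi^{-1}(\Lambda)$ is the single point $\Lambda_t=\Lambda\cap F_{a_t}$, and this flag depends algebraically on $\Lambda$. Hence $\pi$ is bijective over the normal variety $\Sigma^\circ_{a;b}$, and therefore an isomorphism there. Finiteness of $B$-orbits follows from the Bruhat decomposition of $OF(1,\dots,s,k;n)$, since $\Gamma$ is a Schubert variety in it.

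\emph{SNC divisor.} Set $D=\pi^{-1}(\partial\Sigma_{a;b})$ and $D_t=\pi_{\le t}(D)$. We argue by induction on $t$. First, $D_1\cong\mathbb{P}^{a_1-2}$ is a smooth hyperplane in $\mathbb{P}(F_{a_1})$. Next, $D_t=\pi_{\le t}(\pi_{\le t-1}^{-1}(D_{t-1}))\cup\widetilde D_t$, where $\widetilde D_t$ is the sub-bundle $\mathbb{P}(F_{a_t-1}/\Lambda_{t-1})$. This sub-bundle is a smooth divisor transverse to the pulled-back divisor, so SNC is preserved. Finally, $D=\pi_{\le s}^{-1}(D_s)$. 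Here we must check that no additional boundary component appears inside the spinor fibers. This follows from the same observation as above: the $b$-conditions are automatic, so degenerations of $\Lambda$ within a fiber stay in the cell. Pullback of an SNC divisor under a smooth bundle map is SNC, which completes the argument.
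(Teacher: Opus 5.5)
Your plan is the paper's own: its proof just says ``the same as Proposition~\ref{LG}''. The smoothness, equivariance, isomorphism over the cell, and finiteness of orbits are fine. In particular, your check that the $b$-conditions impose nothing on the fibre of $\Gamma\to\pi_{\le s}(\Gamma)$ is correct: use $\dim(\Lambda\cap F_b^\perp)=k-b+\dim(\Lambda\cap F_b)$ for the genuine complements, and parity of $\dim(\Lambda\cap F_{k-1}^\perp)$ for the abused one.

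The gap is the final step $D=\pi_{\le s}^{-1}(D_s)$, and your justification conflates two things. Automatic $b$-\emph{inequalities} show that the fibre is the whole spinor variety $OG(k-s,\Lambda_s^\perp/\Lambda_s)$. They do not make the \emph{equalities} cutting out the cell hold on the fibre, since $\Lambda\supset\Lambda_s$ may meet $F_k$ in excess dimension. Concretely, for $s<k$ the $B$-fixed point $F_k$ lies in $\partial\Sigma_{a;b}$. Its preimage $\pi^{-1}(F_k)=\{(\Lambda_\bullet,F_k)\}$ maps isomorphically onto $\pi_{\le s}(\Gamma)$, so it is not contained in $\pi_{\le s}^{-1}(D_s)$. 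This also gives $\pi_{\le t}(D)=\pi_{\le t}(\Gamma)$, so your $D_t:=\pi_{\le t}(D)$ are not the boundaries your induction uses. More precisely, $D$ contains the divisor $D'=\{\dim(\Lambda\cap F_k)\ge s+2\}$. This is a bundle over $\pi_{\le s}(\Gamma)$ whose fibre is the Schubert divisor $\{(\Lambda/\Lambda_s)\cap(F_k/\Lambda_s)\neq 0\}$ of the spinor fibre. Some such component is unavoidable: $\pi^{-1}(X^\circ)\cong X^\circ$ is an affine space, so it cannot contain the complete positive-dimensional fibres over the open cell of $\pi_{\le s}(\Gamma)$.

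This is not just bookkeeping. Take the chart around $\Lambda=F_k$ in which $\Lambda/\Lambda_s$ is the graph of a skew form $\phi$ on $F_k/\Lambda_s$. There the fibre of $D'$ is the Pfaffian hypersurface $\{\mathrm{Pf}(\phi)=0\}$ of degree $(k-s)/2$, which is singular at $\phi=0$ once $k-s\ge 4$. So $D'$ is a singular component and $D$ is not SNC.

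The smallest instance is $k=4$, $s=0$. There $\Gamma=\Sigma_{a;b}=OG(4,8)\cong Q^6$ and $\pi=\mathrm{id}$. The boundary is the Schubert divisor, a tangent hyperplane section of $Q^6$, i.e.\ a quadric cone. The case $k=5$, $a=(1)$ is similar: there $\Sigma_{a;b}\cong OG(4,8)$ and $\pi$ is an isomorphism.

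So the statement as given fails for $k-s\ge 4$. The proof of Proposition~\ref{LG} makes the same false claim, already for $LG(2,4)$ with $s=0$, so deferring to it does not help. Your argument is correct when $k-s\le 2$. For $k=s$ everything is trivial. For $k-s=2$ the fibre is $\mathbb{P}^1$, $D'$ is the section $\{\Lambda=F_k\}$, and $D=\pi_{\le s}^{-1}(D_s)\cup D'$ is SNC. For $k-s\ge 4$ one needs a finer $\Gamma$, e.g.\ with additional flag steps inside the spinor fibres.
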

\begin{proof}
The proof is the same as that of Proposition \ref{LG}.
\end{proof}

\subsection{Cayley plane}

Consider the Cayley plane $\mathbb{OP}^2=E_6/P_6$. The numbering of the simple roots and their incidence relations are illustrated in the Dynkin diagrams (see Figure \ref{Dynkin1}).

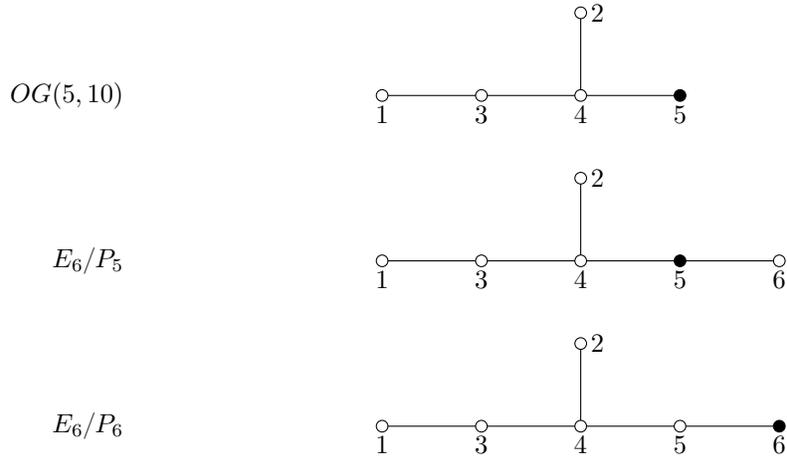
\begin{figure}[ht]
\centering
\caption{Marked Dynkin diagrams}
\begin{tikzpicture}[scale=1.1, every node/.style={font=\small}]\label{Dynkin1}

\node[left] at (0,4) {$OG(5,10)$};
\node[left] at (0,2) {$E_6/P_5$};
\node[left] at (0,0) {$E_6/P_6$};

\begin{scope}[shift={(3,4)}]
  \draw (0,0)--(1.2,0)--(2.4,0)--(3.6,0);
  \draw (2.4,0)--(2.4,1.0);

  \filldraw[fill=white] (0,0) circle (2pt);
  \filldraw[fill=white] (1.2,0) circle (2pt);
  \filldraw[fill=white] (2.4,0) circle (2pt);
  \filldraw[fill=black] (3.6,0) circle (2pt);
  \filldraw[fill=white] (2.4,1.0) circle (2pt);

  \node[below] at (0,0) {$1$};
  \node[below] at (1.2,0) {$3$};
  \node[below] at (2.4,0) {$4$};
  \node[below] at (3.6,0) {$5$};
  \node[right] at (2.4,1.0) {$2$};
\end{scope}

\begin{scope}[shift={(3,2)}]
  \draw (0,0)--(1.2,0)--(2.4,0)--(3.6,0)--(4.8,0);
  \draw (2.4,0)--(2.4,1.0);

  \filldraw[fill=white] (0,0) circle (2pt);
  \filldraw[fill=white] (1.2,0) circle (2pt);
  \filldraw[fill=white] (2.4,0) circle (2pt);
  \filldraw[fill=black] (3.6,0) circle (2pt);
  \filldraw[fill=white] (4.8,0) circle (2pt);
  \filldraw[fill=white] (2.4,1.0) circle (2pt);

  \node[below] at (0,0) {$1$};
  \node[below] at (1.2,0) {$3$};
  \node[below] at (2.4,0) {$4$};
  \node[below] at (3.6,0) {$5$};
  \node[below] at (4.8,0) {$6$};
  \node[right] at (2.4,1.0) {$2$};
\end{scope}

\begin{scope}[shift={(3,0)}]
  \draw (0,0)--(1.2,0)--(2.4,0)--(3.6,0)--(4.8,0);
  \draw (2.4,0)--(2.4,1.0);

  \filldraw[fill=white] (0,0) circle (2pt);
  \filldraw[fill=white] (1.2,0) circle (2pt);
  \filldraw[fill=white] (2.4,0) circle (2pt);
  \filldraw[fill=white] (3.6,0) circle (2pt);
  \filldraw[fill=black] (4.8,0) circle (2pt);
  \filldraw[fill=white] (2.4,1.0) circle (2pt);

  \node[below] at (0,0) {$1$};
  \node[below] at (1.2,0) {$3$};
  \node[below] at (2.4,0) {$4$};
  \node[below] at (3.6,0) {$5$};
  \node[below] at (4.8,0) {$6$};
  \node[right] at (2.4,1.0) {$2$};
\end{scope}

\end{tikzpicture}
\end{figure}

The Schubert varieties are indexed by the minimal representatives $w\in W^{P_6}=W/W_{P_6}$. In particular, there are $27$ Schubert varieties in $E_6/P_6$ (see Table \ref{table1}). In the table, each row represents a Schubert variety in $E_6/P_6$. The first column gives a reduced word for $w$ as a product of simple reflections. For example, $(56)$ stands for $w=s_5s_6$. The second column is the dimension of the Schubert variety $X_{P_6}(w)$, and the third column is its degree.
\begin{table}[ht]
\centering
\caption{Schubert varieties in the Cayley plane.}\label{table1}
\begin{minipage}{0.46\textwidth}
\centering
\begin{tabular}{|c|c|c|}

\hline
$w$ & $\dim$ & deg   \\
\hline
e  & 0         & 1             \\
6  & 1         & 1           \\
56  & 2        & 1            \\
456  & 3       & 1            \\
2456  & 4      & 1             \\
3456  & 4      & 1            \\
13456  & 5    & 1   \\
23456  & 5    & 2             \\
123456  & 6   &  3   \\
423456  & 6   & 2    \\
1423456  & 7   & 5   \\
5423456  & 7   & 2      \\
31423456  &  8 &  5    \\
15423456 & 8  & 7   \\
\hline
\end{tabular}
\end{minipage}
\hfill
\begin{minipage}{0.46\textwidth}
\centering
\begin{tabular}{|c|c|c|}
\hline
$w$ & dim & deg   \\
\hline
65423456   & 8       & 2             \\
315423456  & 9      & 12   \\
165423456  & 9      & 9    \\
4315423456 & 10     & 12    \\
3165423456 & 10     & 21    \\
24315423456 & 11    & 12       \\
43165423456 & 11    & 33   \\
243165423456 & 12   & 45    \\
543165423456 & 12   & 33     \\
2543165423456 & 13 &  78   \\
42543165423456 & 14 & 78     \\
342543165423456 & 15 & 78    \\
1342543165423456 & 16 & 78    \\
\hline
\end{tabular}
\end{minipage}
\end{table}

A standard way to study the geometry of Schubert varieties in the Cayley plane is via the Tits correspondence. Consider the diagram \[
\begin{tikzcd}
& E_6/(P_5\cap P_6) \arrow[dl, "\tau"'] \arrow[dr, "\eta"] & \\
E_6/P_5 & & E_6/P_6.
\end{tikzcd}
\]
Given a subset $U\subset E_6/P_5$, define its Tits transform by $\mathcal{T}(U):=\eta(\tau^{-1}(U))$. In particular, the Tits transform of a point in $E_6/P_5$ is a line in $E_6/P_6$. Let $x$ be the unique $B$-fixed point in $E_6/P_6$. Then the lines in $E_6/P_6$ passing through $x$ are parametrized by the spinor variety $OG(5,10)\subset E_6/P_5$. Therefore, $\mathcal{T}(OG(5,10))=X_{P_6}(24315423456)$ is isomorphic to the cone over $OG(5,10)$, and every subvariety of $X_{P_6}(24315423456)$ arises in this way. We prove that the finite log-resolution constructed for Schubert varieties in $OG(5,10)$ can be lifted to a finite log-resolution of Schubert varieties in $E_6/P_6$.

\begin{Prop}\label{Cayley}
Let $X_{P_6}(w)$ be a Schubert variety in $E_6/P_6$ with $w\leq (24315423456)$. Then there exists a finite log-resolution $\pi:Y\rightarrow X_{P_6}(w)$. In particular, $Y$ can be chosen to be a Schubert variety in $E_6/P$ for some parabolic subgroup $P\subset P_6$, and $\pi$ is induced by the natural projection $E_6/P\rightarrow E_6/P_6$.
\end{Prop}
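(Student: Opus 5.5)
The plan is to reduce to the spinor variety via the Tits correspondence and then lift the resolution of Proposition \ref{spinor}.

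\textbf{Step 1: the cone structure.} Every $w\le (24315423456)$ gives a Schubert variety $X_{P_6}(w)$ inside $X_{P_6}(24315423456)=\mathcal{T}(OG(5,10))$, which is the cone with vertex $x$ over $OG(5,10)\subset E_6/P_5$. Since $X_{P_6}(w)$ is $B$-stable and contains $x$, it is the cone over a $B$-stable Schubert variety $Z=X_{P_{4,5}\cap\cdots}(w')$ in $OG(5,10)$. Here $w'$ is obtained from $w$ by deleting the final letter $6$, and the Levi of $P_6$ of type $D_5$ acts on $OG(5,10)=D_5/P_5$. Concretely,
\[
\widetilde{X}:=\tau\big(\eta^{-1}(X_{P_6}(w))\big)\cap OG(5,10)=Z,
\]
and $\eta^{-1}(X_{P_6}(w))\cap\tau^{-1}(Z)$ is a $\mathbb{P}^1$-bundle over $Z$. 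This bundle is the Schubert variety $X_{P_5\cap P_6}(w)$, and it maps birationally to the cone: it is the blow-up of the vertex, except in the degenerate cases where $w$ is $e$ or $6$.

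\textbf{Step 2: lifting the resolution.} Apply Proposition \ref{spinor} to $Z$. This gives $\Gamma_Z\subset OF(1,\dots,s,5;10)$, a Schubert variety for a parabolic $Q\subset P_5$ of $D_5$, together with a finite log resolution $\Gamma_Z\to Z$. Define $Y:=\Gamma_Z\times_Z X_{P_5\cap P_6}(w)$. This is the Schubert variety $X_P(w'')$ in $E_6/P$, where $P=\widetilde{Q}\cap P_6$ and $\widetilde Q$ is the parabolic of $E_6$ with the same marked nodes as $Q$ together with node $5$. Then $\pi:Y\to X_{P_6}(w)$ is the composite of natural projections, and it is $B$-equivariant and proper.

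\textbf{Step 3: checking the conditions of Definition \ref{finiteresolution}.}
\begin{itemize}
\item Smoothness: $Y$ is a $\mathbb{P}^1$-bundle (the line bundle $\mathcal{O}(-1)$ over $Z$ completed, pulled back) over the smooth $\Gamma_Z$.
\item Finiteness of $B$-orbits: this follows from the Bruhat decomposition of $E_6/P$.
\item Isomorphism over the cell: the open cell $X_{P_6}(w)^\circ$ is the cone over $Z^\circ$ minus the vertex and minus the section at infinity. Over it, $\pi$ is the composite of the isomorphism over $Z^\circ$ with the bundle projection, which is injective off the vertex.
\item SNC boundary: the complement of $\pi^{-1}(X_{P_6}(w)^\circ)$ is the union of $p^{-1}(D_Z)$, where $p:Y\to\Gamma_Z$ and $D_Z$ is the SNC divisor on $\Gamma_Z$, with the two sections $E_0$ (the exceptional section over the vertex) and $E_\infty$ (the section at infinity, equal to the preimage of the cone's boundary $X_{P_6}(w)\setminus(\text{open cone})$).
\end{itemize}
To justify the description of $E_\infty$, one must identify the boundary of $X_{P_6}(w)$. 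This boundary is the union of the cones over the boundary components of $Z$ together with the Schubert divisor $X_{P_6}(w s_6)$, which corresponds to the "base" of the cone, i.e. $\mathcal{T}$-curves not through $x$. Because $E_0$ and $E_\infty$ are disjoint sections of a $\mathbb{P}^1$-bundle, each meets $p^{-1}(D_Z)$ transversally, so the total boundary is SNC.

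\textbf{Main obstacle.} The hard step is this last identification. One has to check, via the Bruhat order on $W^{P_6}$ below $24315423456$, that the boundary divisors of $X_{P_6}(w)$ are exactly the cones over boundary divisors of $Z$ plus the single "hyperplane section" divisor. One must also verify that the cone's hyperplane-at-infinity section really is $B$-stable and of the form $\pi^{-1}$ of a Schubert divisor. I would do this by comparing the Bruhat intervals directly: removing the last letter $6$ gives a bijection between the relevant part of $W^{P_6}$ and $W^{P_5}_{D_5}$. The hypothesis $w\le(24315423456)$ is exactly what guarantees that $X_{P_6}(w)$ lies inside the cone, so that this bijection applies.
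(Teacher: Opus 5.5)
Your construction is the paper's: $Y=\Gamma_Z\times_Z\tau^{-1}(Z)$ with $\pi=\eta\circ\mathrm{pr}_2$. Smoothness, finiteness of $B$-orbits and the isomorphism over the cell go through as you say.

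The genuine error is your description of the Schubert cell and its boundary. This is exactly the step you call the main obstacle, and the claim you propose to verify there is false: there is no $B$-stable section at infinity.
\begin{itemize}
\item By the Borel fixed point theorem, every nonempty closed $B$-stable subvariety of $E_6/P_6$ contains the unique $B$-fixed point $x$. A hyperplane section at infinity of the cone misses $x$, so it cannot be $B$-stable.
\item Equivalently, the cone $X_{P_6}(24315423456)$ has only $1+16$ Schubert cells: the vertex, and one cell over each of the $16$ Schubert cells of $OG(5,10)$. Hence $B$ acts transitively on the whole $\mathbb{A}^1$-bundle over each cell of $Z$. So $X_{P_6}(w)^\circ$ is the cone over $Z^\circ$ with only the vertex removed.
\item Your candidate divisor $X_{P_6}(ws_6)$ is just the point $x$, since $ws_6\in\langle s_1,\dots,s_5\rangle=W_{P_6}$.
\item For instance, $X_{P_6}(56)\cong\mathbb{P}^2$ has cell $\mathbb{A}^2$, the complement of the single line $X_{P_6}(6)$, not $\mathbb{P}^2$ minus two lines.
\end{itemize}

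Once this is corrected, the obstacle disappears. You have $\pi^{-1}(\partial X_{P_6}(w))=p^{-1}(D_Z)\cup E_0$. This is SNC because $p$ is smooth and $E_0$ is a section of the $\mathbb{P}^1$-bundle $p$. Locally, with $D_Z=\{x_1\cdots x_r=0\}$ and a fibre coordinate $t$ for which $E_0=\{t=0\}$, the boundary is $\{t\,x_1\cdots x_r=0\}$. No Bruhat-order comparison is needed.

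Including $E_0$ is correct, and is in fact more careful than the paper's write-up. The paper identifies $\pi^{-1}(\partial\mathcal{T}(\Sigma))$ with $\pi_1^{-1}(D')$ alone, even though the whole vertex section lies over $x\in\partial\mathcal{T}(\Sigma)$.

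A smaller slip in Step 1: $\tau(\eta^{-1}(X_{P_6}(w)))\cap OG(5,10)$ is all of $OG(5,10)$, because every line through $x$ meets $X_{P_6}(w)$ at $x$. You should define $Z$ as the locus of lines through $x$ that are contained in $X_{P_6}(w)$.
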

\begin{proof}
    If $w=0$, then the statement is trivial. Assume $w\neq 0$. Then $X_{P_6}(w)=\mathcal{T}(\Sigma)$ for some Schubert variety $\Sigma\cong \Sigma_{a;b}\subset OG(5,10)\hookrightarrow E_6/P_5$. By Proposition \ref{spinor}, there exists a finite log-resolution $\pi':\Gamma\rightarrow \Sigma_{a;b}$ where $\Gamma$ is a Schubert variety in $OF(1,\dots,s,5;n)$ for some $0\leq s\leq 5$. Under the embedding $OF(1,...,s,5;n)\cong D_5/P_I\hookrightarrow E_6/P_I$, $\Gamma$ can be identified with a Schubert variety in $E_6/P_I$, and $\pi'$ is $B_{E_6}$-equivariant and induced by the projection $E_6/P_I\rightarrow E_6/P_5$.

Set $Y:=\Gamma\times_{\Sigma}\tau^{-1}(\Sigma)$. Since $\Gamma$ is a smooth Schubert variety and $\tau$ is a smooth homogeneous fibration, $Y$ is a smooth Schubert variety in $E_6/P_{I\cup{5,6}}$. In particular, $Y$ has finitely many $B_{E_6}$-orbits by the Bruhat decomposition. We have the following commutative diagram:

\[
\begin{tikzcd}[column sep=large, row sep=large]
Y:=\Gamma\times_\Sigma \tau^{-1}(\Sigma) \arrow[r, "\pi_2"] \arrow[d, "\pi_1"'] & \tau^{-1}(\Sigma) \arrow[r, "\eta"] \arrow[d, "\tau"] & \mathcal{T}(\Sigma)=X_{P_6}(w) \\
\Gamma \arrow[r, "\pi'"] & \Sigma &
\end{tikzcd}
\]
Set $\pi:=\eta\circ\pi_2$. Clearly, $\pi$ is $B_{E_6}$-equivariant. Since $\tau^{-1}(\Sigma)\to\mathcal{T}(\Sigma)$ is an isomorphism over $\mathcal{T}(\Sigma)^\circ$ and $\pi':\Gamma\to \Sigma$ is an isomorphism over $\Sigma^\circ$, it follows that $\pi:Y\rightarrow\mathcal{T}(\Sigma)$ is an isomorphism over $\mathcal{T}(\Sigma)^\circ$. Therefore $\pi$ is birational.

Let $D':=\pi'^{-1}(\partial \Sigma)$. Since $\pi_1$ is smooth and $D'$ is an SNC divisor on $\Gamma$, the pullback $D:=\pi_1^{-1}(D')$ is an SNC divisor on $Y$. Note that $D=\pi^{-1}(\partial\mathcal{T}(\Sigma))$ and hence $\pi:Y\rightarrow \mathcal{T}(\Sigma)=X_{P_6}(w)$ is a finite log-resolution.
\end{proof}

\subsection{Freudenthal variety}

Consider the Freudenthal variety $E_7/P_7$. There are $56$ Schubert varieties, indexed by the minimal representatives $w\in W^{P_7}$ (see Table \ref{table2}). In the table, each row represents a Schubert variety in $E_7/P_7$. The first column gives a reduced word for $w$ as a product of simple reflections. For example, $(67)$ stands for $w=s_6s_7$. The second column is the dimension of the Schubert variety $X_{P_7}(w)$, and the third column is its degree. One Schubert variety of particular interest is $X_{P_7}(243154234567)$, which is isomorphic to the cone over the Cayley plane.

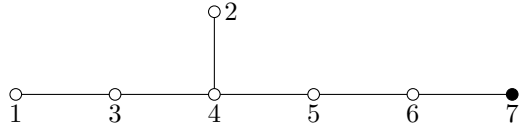
\begin{figure}[ht]
\centering
\caption{Marked Dynkin diagram for $E_7/P_7$}
\begin{tikzpicture}[scale=1.1, every node/.style={font=\small}]

\begin{scope}[shift={(3,0)}]
  \draw (0,0)--(1.2,0)--(2.4,0)--(3.6,0)--(4.8,0)--(6.0,0);
  \draw (2.4,0)--(2.4,1.0);

  \filldraw[fill=white] (0,0) circle (2pt);
  \filldraw[fill=white] (1.2,0) circle (2pt);
  \filldraw[fill=white] (2.4,0) circle (2pt);
  \filldraw[fill=white] (3.6,0) circle (2pt);
  \filldraw[fill=white] (4.8,0) circle (2pt);
  \filldraw[fill=black] (6.0,0) circle (2pt);
  \filldraw[fill=white] (2.4,1.0) circle (2pt);

  \node[below] at (0,0) {$1$};
  \node[below] at (1.2,0) {$3$};
  \node[below] at (2.4,0) {$4$};
  \node[below] at (3.6,0) {$5$};
  \node[below] at (4.8,0) {$6$};
  \node[below] at (6.0,0) {$7$};
  \node[right] at (2.4,1.0) {$2$};
\end{scope}

\end{tikzpicture}
\end{figure}

\begin{table}[ht]
\centering
\caption{Schubert varieties in the Freudenthal variety.}\label{table2}

\begin{minipage}[t]{0.42\textwidth}
\centering
\begin{tabular}{|c|c|c|}
\hline
$w$ & dim & deg   \\
\hline
 e            & 0   &    1      \\
 7            & 1   &    1      \\
 67           & 2   &     1     \\
 567          & 3   &      1    \\
 4567         & 4  &       1   \\
 24567        & 5   &    1      \\
 34567        & 5  &     1     \\
 134567       & 6   &  1\\
 234567       & 6   &   2       \\
 1234567      & 7  & 3 \\
 4234567      & 7  & 2 \\
 14234567     & 8 & 5\\
 54234567     & 8  & 2 \\
 314234567    & 9   &5 \\
 154234567    & 9 & 7 \\
 654234567    & 9  &  2     \\
 3154234567   & 10 & 12\\
 1654234567   & 10 & 9 \\
 7654324567   & 10   &  2        \\
 43154234567  & 11  & 12 \\
 31654234567  & 11 & 21 \\
 17654234567  & 11  & 11 \\
 243154234567 & 12   & 12 \\
\hline
\end{tabular}
\end{minipage}
\hfill
\begin{minipage}[t]{0.53\textwidth}
\centering
\begin{tabular}{|c|c|c|}
\hline
 $w$ & dim & deg \\
\hline
 431654234567     & 12 & 33 \\
 317765234567     & 12   & 32 \\
 2431654234567    & 13  & 45 \\
 5431654234567    & 13   & 33 \\
 4317654234567    & 13  & 65 \\
 25431654234567   & 14  & 78 \\
 24317654234567   & 14   & 110 \\
 54317654234567   & 14  & 98 \\
 425431654234567  & 15   & 78 \\
 254317654234567  & 15 & 286 \\
 654317654234567  & 15    & 98 \\
 3425431654234567 & 16   & 78 \\
 4254317654234567 & 16  & 364 \\
 2654317654234567 & 16   & 384 \\
 13425431654234567 & 17   & 78 \\
 34254317654234567 & 17 & 442 \\
 42654317654234567 & 17 & 748 \\
 134254317654234567 & 18 & 520 \\
 432654317654234567 & 18 & 1190 \\
 542654317654234567 & 18   & 748 \\
 1342654317654234567 & 19 & 1710 \\
 3542654317654234567 & 19 & 1938 \\
\hline
\end{tabular}
\end{minipage}

\vspace{1em}

\begin{tabular}{|c|c|c|}
\hline
 $w$ & dim & deg \\
\hline
 13542654317654234567   & 20  & 3648 \\
 43542654317654234567   & 20  & 1938 \\
 143542654137654234567  & 21 & 5586 \\
 243542654317654234567  & 21   & 1938 \\
 1243542654317654234567 & 22  & 5586 \\
 3143542654317654234567 & 22   & 7524 \\
 23143542654317654234567 & 23 & 13110 \\
 423143542654317654234567 & 24 & 13110 \\
 5423143542654317654234567 & 25 & 13110 \\
 65423143542654317654234567 & 26 & 13110 \\
765423143542654317654234567 & 27 & 13110 \\
\hline
\end{tabular}

\end{table}

\begin{Prop}
Let $X_{P_7}(w)$ be a Schubert variety in $E_7/P_7$ with $w\leq (243154234567)$. Then there exists a finite log resolution $\pi:Y\rightarrow X_{P_7}(w)$. In particular, $Y$ can be chosen to be a Schubert variety in $E_7/P$ for some parabolic $P\subset P_7$, and $\pi$ is induced by the natural projection $E_7/P\rightarrow E_7/P_7$.
\end{Prop}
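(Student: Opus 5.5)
The plan is to mirror the proof of Proposition \ref{Cayley}, replacing the spinor variety by the Cayley plane and using Proposition \ref{Cayley} itself as the input. We use the Tits correspondence for $E_7$:
\[
E_7/P_6 \xleftarrow{\ \tau\ } E_7/(P_6\cap P_7) \xrightarrow{\ \eta\ } E_7/P_7 ,
\]
with Tits transform $\mathcal{T}(U)=\eta(\tau^{-1}(U))$.

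First, let $x$ be the $B$-fixed point of $E_7/P_7$. The lines in $E_7/P_7$ through $x$ are parametrized by the Cayley plane $E_6/P_6\hookrightarrow E_7/P_6$, embedded as the Schubert variety attached to the $E_6$ sub-diagram on nodes $1,\dots,6$. Hence $\mathcal{T}(E_6/P_6)=X_{P_7}(243154234567)$ is the cone over $\mathbb{OP}^2$. For $w\neq e$ with $w\leq(243154234567)$, the Schubert variety $X_{P_7}(w)$ is $\mathcal{T}(\Sigma)$ for a Schubert variety $\Sigma=X_{P_6}(w')\subset E_6/P_6$, where $w'$ is obtained by deleting the final $s_7$. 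One checks from the reduced words that this is a dimension-shifting bijection between Schubert varieties of the cone and those of $\mathbb{OP}^2$.

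The key restriction is this: for the construction to go through, $\Sigma$ must itself admit a finite log resolution of the form in Proposition \ref{Cayley}, i.e. $w'\leq(24315423456)$. Comparing $w=(243154234567)$ with $w'=(24315423456)$ shows that the hypothesis $w\le(243154234567)$ is exactly this condition after stripping the last reflection. So I would verify, using Tables \ref{table1} and \ref{table2}, that $w\mapsto w'$ maps the Bruhat interval below $(243154234567)$ onto the interval below $(24315423456)$, together with $e$.

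Next, Proposition \ref{Cayley} gives a Schubert variety $\Gamma\subset E_6/P$ with $P\subset P_6$, and a finite log resolution $\pi':\Gamma\to\Sigma$ induced by projection. Via $E_6/P\cong E_6/P_J\hookrightarrow E_7/P_J$, the map $\pi'$ becomes $B_{E_7}$-equivariant. Set
\[
Y:=\Gamma\times_\Sigma\tau^{-1}(\Sigma), \qquad \pi:=\eta\circ\mathrm{pr}_2 .
\]
Then $Y$ has the following properties.
\begin{itemize}
\item $Y$ is a smooth Schubert variety in $E_7/P_{J\cup\{6,7\}}$, because $\tau$ is a smooth homogeneous fibration. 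Its finiteness of $B$-orbits follows from the Bruhat decomposition.
\item $\pi$ is an isomorphism over the open cell. This holds because $\tau^{-1}(\Sigma)\to\mathcal{T}(\Sigma)$ is an isomorphism over $\mathcal{T}(\Sigma)^\circ$; indeed $w'\cdot s_7$ is reduced and $\eta$ restricted to the cone is a blowup of the vertex, birational off $x$.
\item The boundary $D=\mathrm{pr}_1^{-1}(\pi'^{-1}(\partial\Sigma))$ is an SNC divisor, being the smooth pullback of an SNC divisor.
\end{itemize}

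The main obstacle is the identification $\pi^{-1}(\partial\mathcal{T}(\Sigma))=\mathrm{pr}_1^{-1}(\pi'^{-1}(\partial\Sigma))$. One must check that $\partial\mathcal{T}(\Sigma)$ is exactly $\mathcal{T}(\partial\Sigma)$ together with the vertex $x$, and that the preimage of $x$, namely the zero section $\Gamma\subset Y$ of the line bundle structure, does not create a new boundary component. Concretely, I would argue that $\eta^{-1}(x)\cap\tau^{-1}(\Sigma)$ is a section of $\tau$ lying over all of $\Sigma$, and hence meets the open $B$-orbit of $\tau^{-1}(\Sigma)$ only in codimension one. If it is indeed an additional $B$-stable divisor, it is the section $E\cong\Gamma$, transverse to the fibers. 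Then $D\cup E$ is still SNC, since $E$ meets $\mathrm{pr}_1^{-1}(D')$ transversally as a section of a smooth fibration.

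This mirrors Proposition \ref{Cayley}, where the same point is asserted. It gives the finite log resolution, and Hodge rigidity then follows from Theorem \ref{hodgerigid}.
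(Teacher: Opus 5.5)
Your proposal is correct and follows the paper's proof. Both realize $X_{P_7}(w)=\mathcal{T}(\Sigma)$ through the Tits correspondence, with $\Sigma=X_{P_6}(w')$ and $w'\le(24315423456)$. Both then take the resolution $\Gamma\to\Sigma$ from Proposition \ref{Cayley} and set $Y=\Gamma\times_\Sigma\tau^{-1}(\Sigma)$, $\pi=\eta\circ\mathrm{pr}_2$.

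Your concern about the vertex is justified, and you can drop the hedge. The section $E\cong\Gamma$ of $\mathrm{pr}_1$ lying over $x$ is always an extra component of $\pi^{-1}(\partial\mathcal{T}(\Sigma))$. It surjects onto $\Gamma$, so it is not contained in $\mathrm{pr}_1^{-1}(D')$. It is also disjoint from the open orbit, not ``meeting it in codimension one''.

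Already for $w=s_7$ one has $\Sigma$ a point, $Y=\mathbb{P}^1$ and $D=\emptyset$, while the boundary is $\{x\}$. So the paper's assertion $D=\pi^{-1}(\partial\mathcal{T}(\Sigma))$, made in Proposition \ref{Cayley} and reused here, is not literally true. Your observation is exactly the needed fix: $D\cup E$ is SNC because $E$ is a section of a $\mathbb{P}^1$-bundle and $D$ is pulled back from the base.

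There is one slip, which the paper's text also makes. $X_{P_7}(243154234567)$ is $12$-dimensional, so it is the cone over $X_{P_6}(24315423456)$, not over $\mathbb{OP}^2$. The cone over $\mathbb{OP}^2$ is the $17$-dimensional $X_{P_7}(13425431654234567)$. Delete the sentence claiming a bijection with all Schubert varieties of $\mathbb{OP}^2$. Your later ``key restriction'' paragraph already uses the correct interval, and the argument needs only that interval.
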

\begin{proof}
Consider the Tits correspondence \[
\begin{tikzcd}
& E_7/(P_6\cap P_7) \arrow[dl, "\tau"'] \arrow[dr, "\eta"] & \\
E_7/P_6 & & E_7/P_7.
\end{tikzcd}
\]
For every $w\leq (243154234567)$, we have $X_{P_7}(w)=\mathcal{T}(X):=\eta(\tau^{-1}(X))$ for some Schubert variety $X$ in $E_6/P_6\subset E_7/P_6$. This correspondence can be obtained by comparing the dimensions and degrees in Table \ref{table1} and Table \ref{table2}. By Proposition \ref{Cayley}, there exists a finite log-resolution $Y'\rightarrow X$. The conclusion then follows by the same argument as in the proof of Proposition \ref{Cayley}.
\end{proof}

\section{Type A flag varieties}\label{typeA}
In this section, we consider Schubert varieties and Schubert cells in $F(k_1,\dots,k_h;n)$, which is isomorphic to $G/P$ with $G=SL(V)$. 

Fix an ordered basis $e_1,\dots,e_n$ of $V$. Let $F_i$ be the vector subspace spanned by $e_1,\dots,e_i$, for $1\le i\le n$. Let $B\subset SL(n;\mathbb{C})$ be the Borel subgroup consisting of upper triangular matrices, and let $T\subset B$ be the maximal torus consisting of diagonal matrices.

\subsection{Resolution of singularities}

Let $\Sigma_{a^\alpha}(F_\bullet)$ be the Schubert variety in $F(k_1,\dots,k_h;n)$. The associated rank matrix $M$ is the $h\times k_h$ matrix with entries $M_{ti}=\mu_{i,t}$, $1\leq i\leq k_h$, $1\leq t\leq h$. Consider the following diagram
\begin{center}
\begin{tabular}{ c c c c c c c }
 $W^{1,1}$&  & ... & &$W^{1,k_h-1}$& &$\Lambda_1$   \\ 
         $\vdots$ & & $\vdots$ &  &$\vdots$ &  &$\vdots$\\
                 $W^{h,1}$ & & ... &  & $W^{h,k_h-1}$&    &$\Lambda_h$\\
                 \hline
 $F_{a_1}$ & $\subset$ & ...&$\subset$&$F_{a_{k_h}-1}$ &$\subset$ & $F_{a_{k_h}}$   
\end{tabular}
\end{center}
where $W^{t,i}$ are subspaces of dimension $\mu_{i,t}$, for $1\le t\le h$ and $1\le i\le k_h-1$, and $\Lambda_t$ are subspaces of dimension $k_t$, for $1\le t\le h$, such that each row and each column form a partial flag of subspaces. For each row, if $W^{t,i}=W^{t,i'}$ for some $i<i'$ (equivalently, if $\mu_{i,t}=\mu_{i',t}$), we delete the redundant entries $W^{t,i'}$ and keep only the leftmost one $W^{t,i}$. There are exactly $k_t$ non-repeated entries in the $t$-th row. Let $\Gamma$ be the collection of the resulting diagrams. Then $\Gamma$ can be viewed as a subset of a product of Grassmannians, and it maps onto $\Sigma_{a^\alpha}(F_\bullet)$ under the projection sending each diagram to $(\Lambda_1,\dots,\Lambda_h)$. Note that $\Gamma$ admits a left action of $B$ on each component, and the projection is $B$-equivariant.

\begin{Thm}\label{resolutionA}
    The natural projection $\pi:\Gamma\rightarrow\Sigma_{a^\alpha}(F_\bullet)$ is a resolution of singularities.
\end{Thm}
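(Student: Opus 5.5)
We need to prove three things: $\Gamma$ is smooth and irreducible, $\pi$ is proper and surjective, and $\pi$ is birational.

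\textbf{Properness and surjectivity.} $\Gamma$ is a closed subvariety of a product of Grassmannians, since the row and column incidence conditions are closed. Hence $\pi$ is proper.

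The image of $\pi$ lies in $\Sigma_{a^\alpha}(F_\bullet)$. Indeed, $W^{t,i}$ has dimension $\mu_{i,t}$ and satisfies $W^{t,i}\subset F_{a_i}\cap\Lambda_t$, by following the row to $\Lambda_t$ and the column down to $F_{a_i}$. So $\dim(F_{a_i}\cap\Lambda_t)\ge\mu_{i,t}$.

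\textbf{Birationality.} On the open cell $\Sigma^\circ_{a^\alpha}$ we have the equalities $\dim(F_{a_i}\cap\Lambda_t)=\mu_{i,t}$. Hence $W^{t,i}=F_{a_i}\cap\Lambda_t$ is forced, so the fiber is a single reduced point. Reducedness can be checked on tangent spaces, or deduced from normality of Schubert varieties via Zariski's main theorem once $\Gamma$ is known to be irreducible.

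This identifies $\pi^{-1}(\Sigma^\circ)\cong\Sigma^\circ$. Combined with irreducibility of $\Gamma$ (below) and properness, it gives birationality and surjectivity, since $\Sigma$ is the closure of $\Sigma^\circ$.

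\textbf{Smoothness and irreducibility.} This is the main step. I would show that $\Gamma$ is an iterated Grassmannian bundle, built by filling the diagram in a suitable order: column by column from left to right, and within each column from top to bottom ($t=1,\dots,h$).

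At each stage we choose a new non-redundant entry $W^{t,i}$ of dimension $\mu_{i,t}$. The constraints on it are:
\[
W^{t-1,i}\subset W^{t,i}\subset F_{a_i}, \qquad W^{t,i'}\subset W^{t,i}\ \text{for the previous non-redundant entry } i'<i \text{ in row } t.
\]
In the last column, $F_{a_{k_h}}$ and the $\Lambda_t$ take the roles of the column and row entries. Here we read the "previous" entries as the last non-redundant entry of each row up to that column, since deleted entries equal an earlier one.

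So $W^{t,i}$ must contain the subspace $A=W^{t-1,i}+W^{t,i'}$, which is already determined, and lie in $F_{a_i}$. The fiber is therefore $G(\mu_{i,t}-\dim A,\ a_i-\dim A)$. This is a smooth irreducible fiber provided $\dim A$ is constant on the previous stage.

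\textbf{Main obstacle: constancy of $\dim A$.} The delicate point is to show that $\dim(W^{t-1,i}+W^{t,i'})$ is constant. I expect the combinatorics of $\mu$ to settle it. Since $\mu_{i,t}-\mu_{i-1,t}\in\{0,1\}$, and a new entry appears exactly when $\mu$ jumps, a jump at $(i,t)$ means $\alpha_i\le t$.

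I would try to prove, inductively, the stronger statement
\[
W^{t-1,i}\cap W^{t,i'}=W^{t-1,i'} \quad\text{(with } W^{t-1,i'} \text{ meaning the appropriate earlier entry)}.
\]
Dimension counting then gives $\dim A=\mu_{i-1,t}+\mu_{i,t-1}-\mu_{i-1,t-1}$, which is either $\mu_{i,t}$ or $\mu_{i,t}-1$ depending on whether $\alpha_i<t$ or $\alpha_i=t$.

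If this inductive intersection statement fails in general, I would fall back on a different order. In that order one chooses, at each step, only the single new vector line added at $(i,\alpha_i)$: a point of $\mathbb{P}(F_{a_i}/W^{\alpha_i,i-1})$ modulo lower data, with $W^{t,i}$ for $t>\alpha_i$ then determined as sums. This gives a tower of projective bundles of the type used for $G(k,n)$ in \S\ref{Cominuscule}.

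Once the tower is established, smoothness and irreducibility follow immediately, and the theorem is proved.
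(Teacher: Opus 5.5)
Your arguments for properness, for the image lying in $\Sigma_{a^\alpha}(F_\bullet)$, and for the fiber over the cell being forced by $W^{t,i}=F_{a_i}\cap\Lambda_t$ are fine and match the paper. The gap is in the smoothness step. With your column-by-column, top-to-bottom order, $\dim A$ is genuinely not constant, and the identity $W^{t-1,i}\cap W^{t,i'}=W^{t-1,i'}$ you hope to prove is false.

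Take $F(1,2;n)$ with $\alpha=(2,1)$, so $\mu_{1,1}=0$, $\mu_{2,1}=1$, $\mu_{1,2}=1$, $\mu_{2,2}=2$. Then $\Gamma=\{(W^{2,1},\Lambda_1,\Lambda_2) : W^{2,1}\subset F_{a_1},\ W^{2,1}+\Lambda_1\subset\Lambda_2\subset F_{a_2}\}$. Your order chooses $W^{2,1}\in\mathbb{P}(F_{a_1})$ and $\Lambda_1\in\mathbb{P}(F_{a_2})$ freely. Then $A=W^{2,1}+\Lambda_1$ is $2$-dimensional generically but $1$-dimensional when $\Lambda_1=W^{2,1}$. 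There the fiber jumps from a point to $\mathbb{P}(F_{a_2}/\Lambda_1)$, and $\Lambda_1\cap W^{2,1}\neq W^{1,1}=0$. Your formula $\mu_{i-1,t}+\mu_{i,t-1}-\mu_{i-1,t-1}$ only gives the generic value.

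Your fallback breaks at the same place. Setting $\Lambda_2:=W^{2,1}+\Lambda_1$ does not give a $2$-plane on that locus. So the tower $\mathbb{P}(F_{a_1})\times\mathbb{P}(F_{a_2})$ you would build is only birational to $\Gamma$: $\Gamma$ has positive-dimensional fibers over it along $\{W^{2,1}=\Lambda_1\}$. That tower tells you nothing about the smoothness of $\Gamma$.

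The missing idea is to fill the diagram in the opposite direction, as the paper does: row $h$ first, left to right, then row $h-1$, and so on upward. Set $W^{h+1,i}:=F_{a_i}$. Each non-redundant entry $W^{t,i}$ (i.e.\ $\alpha_i\le t$) then only has to satisfy $W^{t,i_0}\subset W^{t,i}\subset W^{t+1,i}$, where $i_0<i$ is the previous jump in row $t$ (with $W^{t,i_0}=0$ if there is none). The needed compatibility $W^{t,i_0}\subset W^{t+1,i_0}\subset W^{t+1,i}$ holds automatically. Every other incidence is either automatic (those involving deleted entries, which coincide with entries further left) or is imposed when a later entry is chosen.

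Row $h$ is thus a full flag $W^{h,1}\subset\cdots\subset\Lambda_h$ with $W^{h,i}\subset F_{a_i}$. Each later step is a $\mathbb{P}(W^{t+1,i}/W^{t,i_0})$-bundle whose fiber dimension $\dim W^{t+1,i}-\mu_{i,t}$ is constant. Since each new entry has exactly one already-chosen lower bound and one already-chosen upper bound, no sums or intersections ever need to be controlled, and $\Gamma$ is an iterated projective bundle, hence smooth and irreducible.
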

\begin{proof}
    We show that $\Gamma$ can be realized as an iterated tower of Grassmannians. We use induction, starting from $W^{h,1}$ and proceeding from left to right, then from bottom to top. More precisely, the order is given by $(t,i)>(t',i')$ if $t<t'$ or if $t=t'$ and $i>i'$.

    When $(t,i)=(h,1)$, the locus of $W^{h,1}$ is parametrized by the Grassmannian $G(\mu_{1,h},F_{a_1})$, which is smooth and irreducible.

    Now let $(t,i)>(h,1)$, and let $\Gamma^{<(t,i)}$ be the image of $\Gamma$ under the projection forgetting all entries $W^{t',i'}$ with $(t',i')\geq (t,i)$. Inductively, assume that $\Gamma^{<(t,i)}$ is an iterated tower of Grassmannians. If $\mu_{i,t}>1$, let $i_0$ be the minimal integer such that $\mu_{i_0,t}=\mu_{i,t}-1$. Otherwise, set $i_0=0$ and $W^{t,0}=0$. If $t=h$, set $W^{h+1,i}=F_{a_i}$. Then, over every point of $\Gamma^{<(t,i)}$, the subspace $W^{t,i}$ can be any subspace of dimension $\mu_{i,t}$ satisfying
    $$W^{t,i_0}\subset W^{t,i}\subset W^{t+1,i}.$$
    Therefore, we obtain a $G(\mu_{i,t}-\mu_{i_0,t},\mu_{i,t+1}-\mu_{i_0,t})$-bundle over $\Gamma^{<(t,i)}$. We conclude by induction that $\Gamma$ is an iterated tower of Grassmannians. In particular, $\Gamma$ is smooth and irreducible.

    Note that $\pi$ is an isomorphism over the Schubert cell $\Sigma_{a^\alpha}^\circ(F_\bullet)$. Indeed, for every $(\Lambda_1,\dots,\Lambda_h)\in \Sigma_{a^\alpha}^\circ(F_\bullet)$, the fiber $\pi^{-1}(\Lambda_1,\dots,\Lambda_h)$ is uniquely determined by the relations $W^{t,i}=\Lambda_t\cap F_{a_i}$. Therefore, $\pi:\Gamma\to\Sigma_{a^\alpha}(F_\bullet)$ is a resolution of singularities.
\end{proof}

\begin{Thm}\label{4.4}
Assume that $h=2$ and $k_1=1$. Then the resolution $\pi:\Gamma\rightarrow\Sigma_{a^\alpha}(F_\bullet)$ is a finite log resolution.
\end{Thm}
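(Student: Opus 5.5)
By Theorem \ref{resolutionA}, $\Gamma$ is smooth, irreducible, and $\pi$ is an isomorphism over $\Sigma_{a^\alpha}^\circ(F_\bullet)$. So conditions (1)–(3) of Definition \ref{finiteresolution} reduce to two things: finiteness of $B$-orbits on $\Gamma$, and that $D:=\Gamma\setminus\pi^{-1}(\Sigma^\circ_{a^\alpha}(F_\bullet))$ is an SNC divisor.

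First I would make $\Gamma$ explicit when $h=2$, $k_1=1$. Let $i_1$ be the unique index with $\alpha_{i_1}=1$. The first row then consists of the single line $\Lambda_1=W^{1,i_1}$, subject to $\Lambda_1\subset W^{2,i_1}$. The second row is the chain
$$W^{2,1}\subset W^{2,2}\subset\cdots\subset W^{2,k_2-1}\subset\Lambda_2, \qquad \dim W^{2,i}=i,\qquad W^{2,i}\subset F_{a_i}.$$
So $\Gamma$ is the set of pairs $(\Lambda_1,\Lambda_\bullet)$, where $\Lambda_\bullet=(W^{2,1},\dots,W^{2,k_2})$ is a complete flag in $\Lambda_2$ lying in the Schubert variety $\Sigma_{a}$ of $F(1,\dots,k_2;n)$ for $\alpha=(1,\dots,k_2)$, and $\Lambda_1$ is a line in $W^{2,i_1}$. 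The map $(\Lambda_1,\Lambda_\bullet)\mapsto \Lambda_\bullet$ is a $\mathbb{P}^{i_1-1}$-bundle, namely $\mathbb{P}(W^{2,i_1})$ over a Bott--Samelson type variety.

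\textbf{Finiteness of $B$-orbits.} $\Gamma$ embeds $B$-equivariantly into the partial flag variety of flags $\Lambda_1\subset W^{2,i_1}$ together with $W^{2,1}\subset\cdots\subset W^{2,k_2}$. This is not a single flag variety, since $\Lambda_1$ and $W^{2,i_1-1}$ are not nested. Instead, it is the fibre product $F(1,\dots,k_2;n)\times_{G(i_1,n)}F(1,i_1;n)$, a $\mathbb{P}^{i_1-1}$-bundle over a flag variety. For finiteness I would fix a $B$-orbit of the flag $W^{2,\bullet}$, i.e.\ a $T$-fixed flag $\langle e_{c_1}\rangle\subset\langle e_{c_1},e_{c_2}\rangle\subset\cdots$. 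Its stabilizer in $B$ acts on $\mathbb{P}(W^{2,i_1})$, with $W^{2,i_1}=\langle e_{c_1},\dots,e_{c_{i_1}}\rangle$. The task is then to show that this stabilizer $B'$ has finitely many orbits on the lines of $W^{2,i_1}$. A line is classified up to $T$ by its support $\{c_j\}$ (the nonzero coordinates). I would use the unipotent part of $B'$: it contains root subgroups $U_{e_{c_j}\to e_{c_l}}$ for $c_l<c_j$, but only when they preserve the smaller flag pieces, i.e.\ $l<j$. The claim is that the orbit of a line is determined by a combinatorial datum: the set of ``leading'' coordinates that cannot be cleared by admissible row operations. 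This gives at most $2^{i_1}$ orbits per stratum. I expect this to be the main obstacle, because $\Lambda_1$ is a line and not a flag member, so the bookkeeping of which unipotent moves are available is delicate. The hypothesis $k_1=1$ is exactly what keeps the orbit count finite; for larger $k_1$ one would face Grassmannians of $W^{2,i_1}$ with moduli.

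\textbf{SNC divisor.} The boundary $\partial\Sigma_{a^\alpha}$ is a union of Schubert divisors, each given by the drop of one rank condition $\dim(F_{a_i}\cap\Lambda_t)\ge\mu_{i,t}$ toward a smaller $a$-index. On $\Gamma$ these pull back to the loci $\{W^{2,i}\subset F_{a_i-1}\}$, plus $\{\Lambda_1\subset W^{2,i_1}\cap F_{a_{i}-1}\}$ for the relevant $i$, or equivalently $\{\Lambda_1\subset W^{2,i_1-1}\}$. I would proceed by induction along the tower of Theorem \ref{resolutionA}, exactly as in the Grassmannian case. At each step the new boundary component is a sub-projective/Grassmannian bundle $\widetilde D$ of the current fibre bundle, cut out by one linear condition. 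The remaining components are pulled back from the base, so transversality follows as in Proposition on Grassmannians. At the last step, the $\Lambda_1$-step, the possible new components are $\mathbb{P}(W^{2,i_1}\cap F_{a_{i_1}-1})$ and $\mathbb{P}(W^{2,i_1-1})$, which are two hyperplanes in the fibre $\mathbb{P}(W^{2,i_1})$. I would check that generically they are distinct hyperplanes and hence meet transversally. Combining this with the pullback of the SNC boundary of the base gives SNC. It remains to check that $D$ is exactly the union of these components. For this I would compare with $\pi^{-1}(\Sigma^\circ)$, which is characterized by $W^{t,i}=\Lambda_t\cap F_{a_i}$ holding with equality of dimensions.
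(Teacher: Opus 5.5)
Your setup and strategy are the paper's. You view $\Gamma$ as the $\mathbb{P}(W^{2,i_1})$-bundle $\pi'$ over the smooth Schubert variety $\Sigma_{a^\beta}\subset F(1,\dots,k_2;n)$, count orbits fibrewise over $T$-fixed points, and get SNC from $\partial\Sigma_{a^\beta}$ plus one extra component.

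The step you call the main obstacle is not one. The stabilizer $B'$ of the $T$-fixed flag contains $T$. You already observed that $T$-orbits of lines in the coordinate space $\langle e_{c_1},\dots,e_{c_{i_1}}\rangle$ are classified by their support. So $B'$ has at most $2^{i_1}-1$ orbits on the fibre, which is the paper's entire argument; no unipotent bookkeeping is needed.

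\textbf{The error: the $\Lambda_1$-step of the SNC argument.} The two ``hyperplanes'' $\mathbb{P}(W^{2,i_1}\cap F_{a_{i_1}-1})$ and $\mathbb{P}(W^{2,i_1-1})$ are never distinct where both are hyperplanes. Since $W^{2,i_1-1}\subset F_{a_{i_1-1}}\subset F_{a_{i_1}-1}$, one always has $W^{2,i_1-1}\subset W^{2,i_1}\cap F_{a_{i_1}-1}$. Equality holds exactly when $W^{2,i_1}\not\subset F_{a_{i_1}-1}$. Over $E_{i_1}:=\{W^{2,i_1}\subset F_{a_{i_1}-1}\}$ the first set is the whole fibre. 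So the check ``generically distinct, hence transverse'' fails, and the boundary has been misdescribed.

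The correct statement is $\{\Lambda_1\subset F_{a_{i_1}-1}\}=H\cup\pi'^{-1}(E_{i_1})$ with $H:=\{\Lambda_1\subset W^{2,i_1-1}\}$. There is exactly one new component, the hyperplane sub-bundle $H$ of the $\mathbb{P}^{i_1-1}$-bundle (empty if $i_1=1$). Locally trivialize the flag $W^{2,i_1-1}\subset W^{2,i_1}$ over $\Sigma_{a^\beta}$. Then $D=\pi'^{-1}(\partial\Sigma_{a^\beta})\cup H$ becomes the union of the pullback of a divisor on the base and a coordinate hyperplane in the fibre. This is SNC because $\partial\Sigma_{a^\beta}=\bigcup_i E_i$, with $E_i:=\{W^{2,i}\subset F_{a_i-1}\}$, is SNC on the iterated projective bundle $\Sigma_{a^\beta}$. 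This is precisely the paper's decomposition $\pi'^{-1}(D_1)\cup D_2$ with $D_2=H$.

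\textbf{The criterion for identifying $D$ is also wrong as stated.} The conditions $W^{t,i}=\Lambda_t\cap F_{a_i}$ with expected dimensions only involve the spaces $F_{a_i}$. They describe where the fibre of $\pi$ is a single point, not $\pi^{-1}(\Sigma^\circ_{a^\alpha})$.

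For example, take $n=4$, $a=(2,4)$, $\alpha=(1,2)$. Then $\Gamma=\Sigma_{a^\alpha}=\{\Lambda_1\subset F_2\}$. A point with $\Lambda_1=F_1$ and general $\Lambda_2\supset F_1$ satisfies all your equalities, yet lies in the boundary.

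Type A Schubert cells are cut out by the numbers $\dim(\Lambda_t\cap F_j)$ for all $j$. On $\Gamma$ this gives $\pi^{-1}(\Sigma^\circ_{a^\alpha})=\{W^{2,i}\not\subset F_{a_i-1}\ \text{for all } i,\ \Lambda_1\not\subset F_{a_{i_1}-1}\}$. From this, $D=\pi'^{-1}(\bigcup_i E_i)\cup H$ follows.
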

\begin{proof}
    By Theorem \ref{resolutionA}, it suffices to show that $\Gamma$ has finitely many $B$-orbits and that $\Gamma\backslash\pi^{-1}(\Sigma_{a^\alpha}^\circ)$ is an SNC divisor.

    Note that there is a natural projection $\pi':\Gamma\rightarrow F(1,2,...,k_2;n)$ to the bottom row, whose image is the Schubert variety $\Sigma_{a^\beta}(F_\bullet)$, where $\beta=(1,2,\dots,k_2)$. We claim that every $B$-orbit $C$ (that is, Schubert cell) in $\Sigma_{a^\beta}(F_\bullet)$ lifts to finitely many $B$-orbits in $\Gamma$. 
    
    Let $a^1=a_{i_0}$ be the unique sub-index such that $\alpha_{i_0}=1$. Let $x\in C$ be the unique $T$-fixed point in $C$, whose entries are subspaces spanned by subsets of the basis vectors ${e_1,\dots,e_n}$ of $V$. Let $x_{i_0}$ be the $i_0$-th entry of $x$. Then the natural action of $T$ on $G(1,x_{i_0})$ has $2^{i_0}-1$ orbits. Since the stabilizer of $x$ contains $T$, there are only finitely many $B$-orbits in $\Gamma$ lying over $C$. By the Bruhat decomposition, there are only finitely many Schubert cells in $\Sigma_{a^\beta}(F_\bullet)$, and therefore $\Gamma$ has only finitely many $B$-orbits. 

    Let $D_1$ be the union of Schubert divisors of $\Sigma_{a^\beta}(F_\bullet)$. It is known that $D_1$ is an SNC divisor. Notice that $\Gamma\backslash\pi^{-1}(\Sigma_{a^\alpha}^\circ)=\pi'^{-1}(D_1)\cup D_2$ where $D_2$ is a $\mathbb{P}^1$-bundle over $\Sigma_{a^\beta}(F_\bullet)$. Therefore $\pi'^{-1}(D_1)\cup D_2$ is an SNC divisor in $\Gamma$.   
\end{proof}
\begin{Rem}
    In fact, the resolution constructed in Theorem \ref{resolutionA} is isomorphic to the classical Bott--Samelson resolution, for which the SNC divisor property is standard.
\end{Rem}
\begin{Cor}
    The CSM classes of Schubert cells in $F(1,k_2;n)$ are Hodge rigid.
\end{Cor}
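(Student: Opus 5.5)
The plan is to deduce the corollary directly from Theorem \ref{hodgerigid}. That theorem needs a finite log resolution of every Schubert variety in $F(1,k_2;n)$, and Theorem \ref{4.4} supplies exactly this when $h=2$ and $k_1=1$.

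First, fix a Schubert cell $\Sigma^\circ_{a^\alpha}(F_\bullet)$ in $F(1,k_2;n)$ with closure $X=\Sigma_{a^\alpha}(F_\bullet)$. Using the identification $F(1,k_2;n)\cong SL(n)/P$ with $P=P_{\{1,k_2\}}$, we regard $X$ as a $B$-stable Schubert variety $X_P(w)$, and $\Sigma^\circ_{a^\alpha}(F_\bullet)$ as its open $B$-orbit $X^\circ$. This uses the remark following \S\ref{2.1.7}. One should check that the open $B$-orbit is indeed the locus where all the rank conditions $\dim(F_{a_i}\cap\Lambda_t)\ge\mu_{i,t}$ hold with equality. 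That is standard for type A.

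Next, apply Theorem \ref{resolutionA} to obtain the $B$-equivariant resolution $\pi:\Gamma\to X$. Then apply Theorem \ref{4.4} with $h=2$, $k_1=1$: it shows $\pi$ is a finite log resolution in the sense of Definition \ref{finiteresolution}. Properness holds because $\Gamma$ is a closed subvariety of a product of Grassmannians. Theorem \ref{hodgerigid} now gives, for each $k$, an irreducible subvariety of $X\subset F(1,k_2;n)$ whose class is $c_{SM}(X^\circ)_k$. Pushing forward along the closed embedding $X\hookrightarrow F(1,k_2;n)$ preserves both the CSM class and irreducibility. Hence each homogeneous component of the CSM class of the cell, viewed in the ambient flag variety, is Hodge rigid.

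The only delicate point is the degenerate case of a vanishing component. Theorem \ref{hodgerigid} produces an irreducible representative in each degree $k\le\dim X$; this rests on Huh's result that exactly one component of $\mathfrak{D}_k(\Lambda)$ is generically supported on $Y^\circ$, so the pushforward is nonzero. I would simply cite this and not reprove it. Since the corollary is a direct combination of Theorem \ref{4.4} and Theorem \ref{hodgerigid}, I expect no real obstacle beyond this bookkeeping.
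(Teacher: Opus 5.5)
Your proposal is correct and is exactly the paper's argument: the corollary is obtained by feeding Theorem \ref{4.4} into Theorem \ref{hodgerigid}, since for $h=2$, $k_1=1$ the resolution $\pi:\Gamma\to\Sigma_{a^\alpha}(F_\bullet)$ is a finite log resolution. One aside is inaccurate but harmless: the open $B$-orbit is not the locus where the conditions $\dim(F_{a_i}\cap\Lambda_t)\ge\mu_{i,t}$ hold with equality. Already for $\mathbb{P}(F_3)\subset\mathbb{P}^{n-1}$ the equality $\dim(\Lambda\cap F_3)=1$ holds on all of $\mathbb{P}(F_3)$, including $\mathbb{P}(F_2)$, so one would also need the conditions on the $F_{a_i-1}$. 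You never use this claim, however, since a Schubert cell is by definition the open $B$-orbit $X^\circ$ to which Theorem \ref{hodgerigid} applies.
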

\begin{proof}
    This follows from Theorem \ref{hodgerigid} and Theorem \ref{4.4}.
\end{proof}

\section{Symplectic Grassmannians}\label{SG}
In this section, we consider Schubert varieties in symplectic Grassmannians. In particular, we construct resolutions of singularities of these Schubert varieties and prove that, under certain conditions, these resolutions are finite log resolutions. As an application, we also describe the singular locus of Schubert varieties in symplectic Grassmannians.

\subsection{Resolution of singularities}

We begin with an example.
\begin{Ex}
    Let $\Sigma_{;2}$ be the Schubert variety in $SG(1,n)$, $n\geq 6$, defined by the sequence $F_2^\perp$. Consider the following locus in $SF(1,3;n)$:
    $$\Gamma:=\{(\Lambda_1,\Lambda_2)\in SF(1,3;n)|\Lambda_1\subset \Lambda_2, F_2\subset\Lambda_2\subset F_2^\perp\}.$$
    We claim that the forgetful map $\pi:\Gamma\rightarrow \Sigma_{;2}$, $(\Lambda_1,\Lambda_2)\mapsto \Lambda_1$ is a resolution of singularities. Indeed, $\Gamma$ maps onto $\Sigma_{;2}$, and for a general point $\Lambda_1\in \Sigma_{;2}$, the fiber of $\pi$ is uniquely determined by taking $\Lambda_2$ to be the span of $F_2$ and $\Lambda_1$. Let $\pi_2:SF(1,3;n)\rightarrow SG(3,n)$ be the second projection. Then $\pi_2(\Gamma)$ is isomorphic to $SG(1,n-2)$, and every fiber of $\pi_2$ is isomorphic to $\mathbb{P}^2$. Hence $\Gamma$ is smooth. By Zariski's Main Theorem, $\pi$ is a resolution of singularities. It also follows that the singular locus of $\Sigma_{;2}$ is $F_2$.
\end{Ex}

\begin{Def}\label{resolution}
    Let $\Sigma_{a;b}(F_\bullet)$ be the Schubert variety in $SG(k,n)$ defined by an isotropic flag $F_\bullet$. For each $1\leq j\leq k-s$, set $$x_j:=\#\{i|a_i\leq b_j\}.$$ 
    For $1\leq \tau\leq k$, set $z_\tau:=\#\{j|b_j<a_\tau\}$ if $\tau\leq s$, and set $z_\tau:=k-\tau+1$ if $\tau>s$. Define
\begin{eqnarray}
    \Gamma_{a;b}(F_\bullet):=\{(\Lambda_1,...,\Lambda_k,W^{1,1},...,W^{1,z_1},W^{2,1},...,W^{k,z_k})&|& F_{z_\tau}\subset W^{\tau,z_\tau}\subset F_{a_\tau}\text{ or }F_{b_{k-\tau+1}}^\perp,\nonumber\\
    & &W^{\tau-1,j}\subset W^{\tau,j}\subset W^{\tau,j+1}\text{ for }1\leq j\leq z_\tau-1,\nonumber\\
    & &\Lambda_{\tau-1}\subset\Lambda_{\tau}\subset W^{\tau,z_\tau}\}\nonumber
\end{eqnarray}
\end{Def}
\begin{Rem}\label{tableS}
    An element in $\Gamma_{a;b}(F_\bullet)$ can be arranged into the following table
\begin{center}
\begin{tabular}{ c c c c c c c c c}
 &  & $F_{b_1}$ & &...& &$F_{b_{k-s}}$ & &  \\ 
  & & $\cap$ & & & &$\cap$ & &  \\  
 $\Lambda_1$ & $\subset$ & $W^{1,1}$ & $\subset$ & ...&$\subset$&$W^{1,k-s}$ &$\subset$  &$F_{a_1}$   \\
     $\cap$ & & $\cap$ & & & &$\cap$ & &$\cap$\\
         $\vdots$ & & $\vdots$ & & & &$\vdots$ &  &$\vdots$\\
                 $\cap$ & & $\cap$ & & & & $\cap$&    &$\cap$\\
 $\Lambda_k$ & $\subset$ & $W^{k,1}$ & $\subset$ & ...&$\subset$&$W^{k,k-s}$ &$\subset$ & $F_{b_1}^\perp$   
\end{tabular}
\end{center}
such that each row and each column forms a partial flag of isotropic subspaces. Here we omit $W^{\tau,j}$ if $j>z_\tau$. If $s=0$, then the rightmost column starts with $F_{b_k}^\perp$.
\end{Rem}

\begin{Thm}\label{smoothness}
The forgetful map $\pi:\Gamma_{a;b}(F_\bullet)\rightarrow \Sigma_{a;b}(F_\bullet)$ is a resolution of singularities.
\end{Thm}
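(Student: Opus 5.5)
I will follow the strategy used for Theorem \ref{resolutionA} and the Example above: show that $\Gamma_{a;b}(F_\bullet)$ is an iterated tower of smooth fibrations, hence smooth and irreducible. Then show that $\pi$ is an isomorphism over the Schubert cell $\Sigma^\circ_{a;b}(F_\bullet)$, and that its image is exactly $\Sigma_{a;b}(F_\bullet)$. Since $\pi$ is proper and birational onto a variety, this gives a resolution of singularities.

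For smoothness I would fill in the table of Remark \ref{tableS} in a fixed order: column by column from left to right starting with the $W^{\tau,j}$, row by row from top to bottom, and finally the first column $\Lambda_1\subset\dots\subset\Lambda_k$. At each step the new entry $W^{\tau,j}$ is constrained to lie between two already chosen subspaces, $W^{\tau-1,j}$ (or $F_{b_j}$) below and $W^{\tau,j+1}$ (or $F_{a_\tau}$, $F^\perp_{b_{k-\tau+1}}$) above, with prescribed dimension. The key observation is that every space sitting above $W^{\tau,j}$ in the table is contained in $F_{b_j}^\perp$, while $F_{b_j}\subset W^{\tau,j}$. Hence the isotropy condition on $W^{\tau,j}$ only involves the quotient $W^{\tau,j}/F_{b_j}$ inside $F_{b_j}^\perp/F_{b_j}$, where the form is nondegenerate. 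So each step is either an ordinary Grassmannian bundle, when the upper bound is already isotropic (e.g. $F_{a_\tau}$ or rows with $\tau\le s$), or a bundle of isotropic Grassmannians $SG(d,2m)$ relative to a fixed symplectic quotient, or of isotropic subspaces inside a coisotropic space $U^\perp\supset U$, which again reduces to $SG$ of $U^\perp/U$. All of these are smooth and irreducible, so by induction $\Gamma_{a;b}(F_\bullet)$ is smooth and irreducible, with dimension equal to the sum of the fiber dimensions.

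For birationality, take $\Lambda\in\Sigma^\circ_{a;b}(F_\bullet)$. I would define the unique preimage by $\Lambda_\tau:=\Lambda\cap F_{a_\tau}$ for $\tau\le s$ and $\Lambda_\tau:=\Lambda\cap F^\perp_{b_{k-\tau+1}}$ for $\tau>s$, and $W^{\tau,j}:=\Lambda_\tau+F_{b_j}$. I would then check that these have the required dimensions. This uses the rank conditions defining the cell together with the genericity statement $\dim(\Lambda\cap F_{b_j})$ minimal, which forces $\dim(\Lambda_\tau+F_{b_j})$ to be as prescribed. Uniqueness follows because on the open cell the containments $\Lambda_\tau\subset\Lambda\cap(\cdot)$ and $F_{b_j}+\Lambda_\tau\subset W^{\tau,j}$ are equalities by dimension count. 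Surjectivity onto $\Sigma_{a;b}$ then follows: the image is closed and irreducible, lies in $\Sigma_{a;b}$ because the incidence conditions are closed, and contains the cell. I will also compare $\dim\Gamma_{a;b}$ with the known dimension of $\Sigma_{a;b}$ as a sanity check.

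The main obstacle is the bookkeeping in the smoothness step. One must verify, using the integers $x_j$ and $z_\tau$, that the dimension of each new entry relative to its lower bound is constant along the base, and that the relevant quotient spaces are genuinely symplectic or isotropic of constant dimension. The delicate point is the interaction between the rows $\tau\le s$ and the columns $F_{b_j}$ with $b_j\ge a_\tau$; I expect the index convention $z_\tau=\#\{j\mid b_j<a_\tau\}$ is designed precisely so that those columns are omitted there. I would first work this out for small cases such as the Example above before writing the general inductive statement.
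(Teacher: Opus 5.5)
Your proposal is correct and is essentially the paper's proof. The paper builds $\Gamma_{a;b}(F_\bullet)$ as an iterated tower of Grassmannian and symplectic Grassmannian bundles via the successive forgetful maps. It then takes the explicit preimage $\Lambda_\tau=\Lambda\cap F_{a_\tau}$ or $\Lambda\cap F^\perp_{b_{k-\tau+1}}$, $W^{\tau,j}=\mathrm{span}(\Lambda_\tau,F_{b_j})$ over a general point, and uses Zariski's Main Theorem to get an isomorphism over the cell.

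One small correction: when you choose $W^{\tau,j}$, its upper bound $W^{\tau,j+1}$ must already be fixed. So you should proceed right to left within each row, not left to right as you wrote; the paper goes row by row from the top, right to left, choosing $\Lambda_\tau$ after row $\tau$. Your observation that isotropy only matters in $F_{b_j}^\perp/F_{b_j}$ is a useful justification that the paper leaves implicit.
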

\begin{proof}
    We prove that $\Gamma_{a;b}(F_\bullet)$ is smooth by showing that it can be realized as an iterated tower of Grassmannians and symplectic Grassmannians. Let $\pi_\tau$ denote the projection obtained by forgetting $\Lambda_\tau$, and let $\pi^{\tau,j}$ denote the projection obtained by forgetting $W^{\tau,j}$. Consider the fibration
    $$\pi^{1,k-s}\circ\pi^{1,k-s-1}\circ...\circ\pi^{1,1}\circ\pi_{1}\circ\pi^{2,k-s}\circ\pi^{2,k-s-1}\circ...\circ\pi^{k,1}\circ\pi_k.$$
    Here, the source of each projection is understood accordingly, and we omit $\pi^{\tau,j}$ whenever $W^{\tau,j}$ is undefined. The leftmost projection (whose source has only one component) is regarded as the constant map. We claim that all fibers are isomorphic to Grassmannians or symplectic Grassmannians.
    
\begin{itemize}

    \item (I-1) If $z_\tau=0$, then necessarily $\tau\leq s$, and the fibers of $\pi_\tau$ are isomorphic to $G(1,a_\tau-\tau+1)$ parameterizing all $\Lambda_\tau$ such that $$\Lambda_{\tau-1}\subset \Lambda_\tau\subset F_{a_\tau}.$$

        \item (I-2) If $z_\tau>0$, then the fibers of $\pi_\tau$ are isomorphic to $G(1,\dim(W^{\tau,1})-\tau+1)$, parameterizing all $\Lambda_\tau$ such that $$\Lambda_{\tau-1}\subset \Lambda_\tau\subset W^{\tau,1}.$$

                \item (II-1) If $1\leq j<z_\tau$ and $W^{\tau-1,j}$ does not exist, then the fibers of $\pi^{\tau,j}$ are isomorphic to the Grassmannian parameterizing all $W^{\tau,j}$ such that $$F_{b_j}\subset W^{\tau,j}\subset W^{\tau,j+1}$$

                    \item (II-2) If $1\leq j<z_\tau$ and $W^{\tau-1,j}$ exists, then the fibers of $\pi^{\tau,j}$ are isomorphic to the Grassmannian parameterizing all $W^{\tau,j}$ such that $$W^{\tau-1,j}\subset W^{\tau,j}\subset W^{\tau,j+1}$$
                    
                \item (III-1) If $\tau\leq s$ and $W^{\tau-1,z_\tau}$ does not exist, then the fibers of $\pi^{\tau,z_\tau}$ are isomorphic to the Grassmannian parameterizing all $W^{\tau,z_\tau}$ such that $$F_{b_{z_\tau}}\subset W^{\tau,z_\tau}\subset F_{a_\tau}$$
 
                \item (III-2) If $\tau\leq s$ and $W^{\tau-1,z_\tau}$ exists, then the fibers of $\pi^{\tau,z_\tau}$ are isomorphic to the Grassmannian parameterizing all $W^{\tau,z_\tau}$ such that $$W^{\tau-1,z_\tau}\subset W^{\tau,z_\tau}\subset F_{a_\tau}$$
                
                \item (III-3) If $\tau\geq s+1$, then $z_\tau=k-\tau+1$, and the fibers of $\pi^{\tau,z_\tau}$ are isomorphic to the symplectic Grassmannian parameterizing all $W^{\tau,z_\tau}$ such that $$W^{\tau-1,z_\tau}\subset W^{\tau,z_\tau}\subset F_{b_{k-\tau+1}}^\perp$$ 
    \end{itemize}
    We conclude that $\Gamma_{a;b}(F_\bullet)$ is smooth. Note that for a general point $\Lambda\in \Sigma_{a;b}(F_\bullet)$, the fiber of $\pi$ is uniquely determined by the relations $\Lambda_\tau=\Lambda\cap F_{a_\tau}$ or $\Lambda_\tau=\Lambda\cap F_{b_{k-\tau+1}}^\perp$, and $W^{\tau,j}=\text{span}(\Lambda_\tau,F_{b_j})$. By Zariski's Main Theorem, $\pi$ is an isomorphism over $\Sigma_{a;b}^\circ(F_\bullet)$. Therefore, $\pi:\Gamma_{a;b}(F_\bullet)\rightarrow \Sigma_{a;b}(F_\bullet)$ is a resolution of singularities.
\end{proof}

Let $e_1,...,e_n$ be a basis of $V$ such that $F_i=\text{span}(e_1,...,e_i)$ for $1\leq i\leq \frac{n}{2}$ and $F_j^\perp=\text{span}(e_1,...,e_{n-j})$ for $0\leq j\leq \frac{n}{2}-1$. Let $G=Sp(n)\cong Sp(V)$. Let $T$ and $B$ be the subgroups of diagonal matrices and upper triangular matrices in $G$, respectively. Note that the variety $\Gamma_{a;b}(F_\bullet)$ constructed above admits a left action of $B$ by acting on each component, and the resolution of singularities $\pi:\Gamma_{a;b}(F_\bullet)\rightarrow \Sigma_{a;b}(F_\bullet)$ is $B$-equivariant. We will prove that $\pi$ is a finite log resolution under certain conditions.
\begin{Ex}
    Consider the Schubert variety $\Sigma_{3;1}(F_\bullet)$ in $SG(2,n)$.
    Then $\Gamma_{3;1}(F_\bullet)\subset SG(1,n)\times SG(2,n)\times SG(2,n)\times SG(3,n)$ is the locus of $(\Lambda_1,\Lambda_2,W^1,W^2)$ such that $\Lambda_2\in\Sigma_{3;1}$ and fitting into the following diagram:
\begin{center}
\begin{tabular}{ c c c c c }
 &  & $F_1$ & & \\ 
  & & $\cap$ & & \\  
 $\Lambda_1$ & $\subset$ & $W^1$ & $\subset$ &$F_3$    \\
  $\cap$ & & $\cap$ &  &$\cap$\\
   $\Lambda_2$ & $\subset$ & $W^2$ & $\subset$ &$F_1^\perp$  
\end{tabular}
\end{center}
There is a natural projection 
$$\pi':\Gamma_{3;1}(F_\bullet)\rightarrow \Sigma_{3^1;1^2}(F_\bullet)\subset SF(1,2;n).$$
Note that the partial flag variety $SF(1,2;n)$ has a stratification by Schubert cells and hence has only finitely many $B$-orbits. We show that $\Gamma_{3;1}(F_\bullet)$ also has finitely many $B$-orbits by proving that, over every point $(\Lambda_1,\Lambda_2)\in \Sigma_{3^1;1^2}(F_\bullet)$, the fiber has finitely many orbits under the action of the stabilizer of $(\Lambda_1,\Lambda_2)$.

If $(\Lambda_1,\Lambda_2)$ is not contained in the exceptional locus of $\pi'$, then the fiber over it consists of a unique point. Thus the corresponding orbit lifts to a unique orbit in $\Gamma_{3;1}(F_\bullet)$.

If $(\Lambda_1,\Lambda_2)$ is contained in the exceptional locus of $\pi'$, then necessarily either $\Lambda_1=F_1$ or $F_1\subset \Lambda_2\subset F_3$. First assume that $\Lambda_1=F_1$ and $\Lambda_2\not\subset F_3$. The fiber over $(\Lambda_1,\Lambda_2)$ can then be identified with the locus of $W^1$ such that
$$F_1\subset W^1\subset F_3.$$
Projecting from $F_1$, we see that the stabilizer of $\Lambda_2/F_1$ contains the automorphism group of $F_3/F_1$, since $\Lambda_2\not\subset F_3$. Thus $G(1,F_3/F_1)$ is stratified into two Schubert cells under the action of the stabilizer.

If $\Lambda_1\neq F_1$ and $F_1\subset \Lambda_2\subset F_3$, then $W^1$ is uniquely determined by $\text{span}(\Lambda_1,F_1)$, and $W^2$ can be any $3$-dimensional isotropic subspace such that
$$\Lambda_2\subset W^2\subset F_1^\perp.$$
By projecting from $\Lambda_2$, one sees that the fiber is isomorphic to $SG(1,F_1^\perp/\Lambda_2)$, which has a stratification by Schubert cells. 
\end{Ex}

We need the following lemmas.

\begin{Lem}\label{orbit}
        Let $G=SL(V)$. Fix a maximal torus $T$ and a Borel subgroup $B\supset T$. Let $\Lambda_1\subset \cdots \subset \Lambda_t$ be a partial flag in $V$ with $\dim(\Lambda_i)=i$. Let $S\subset B$ be the stabilizer of $\Lambda_\bullet$ under the left action. Then the orbit space $G(1,V)/S$ is finite.
    \end{Lem}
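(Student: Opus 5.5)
The idea is to show that $S$ contains a maximal torus $T'$ of $G$. Once that is done, finiteness is immediate, because a maximal torus already has only finitely many orbits on $G(1,V)=\mathbb{P}(V)$. The lemma is stated for an arbitrary partial flag, not necessarily $T$-fixed. So the one real step is finding a torus that fixes both the flag $F_\bullet$ defining $B$ and the flag $\Lambda_\bullet$.

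\emph{Step 1 (a common adapted basis).} Let $F_\bullet$ be the complete flag whose stabilizer in $G$ is $B$. I will use the standard linear-algebra fact behind the Bruhat decomposition: any two flags in $V$ admit a common adapted basis. That is, there is a basis $f_1,\dots,f_n$ of $V$ such that every $F_d$ and every $\Lambda_i$ is spanned by a subset of the $f_j$. To prove it, I would induct on $\dim V$. Pick the smallest $d$ with $F_d\cap\Lambda_1\neq 0$; then $F_d\cap\Lambda_1=\Lambda_1$ because $\dim\Lambda_1=1$. Choose $f$ spanning $\Lambda_1$, and apply induction in $V/\Lambda_1$ to the images of both flags. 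Lifting the resulting basis compatibly then gives the basis for $V$.

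Equivalently, one can quote the standard fact that the intersection of the Borel subgroup $B$ with the parabolic $Q=\operatorname{Stab}_G(\Lambda_\bullet)$ contains a maximal torus of $G$.

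\emph{Step 2 (the torus lies in $S$).} Let $T'\subset SL(V)$ be the diagonal torus with respect to $f_1,\dots,f_n$, consisting of determinant-one matrices that are diagonal in this basis. Each element of $T'$ preserves every coordinate subspace. Hence it preserves every $F_d$, so it lies in $B$, since $B$ is the full stabilizer of $F_\bullet$ in $G$. It also preserves every $\Lambda_i$. Therefore $T'\subset S$.

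\emph{Step 3 (finiteness).} Write $v=\sum c_jf_j$. The $T'$-orbit of $[v]\in\mathbb{P}(V)$ is exactly the set of lines $[\sum c'_jf_j]$ with $\{j : c'_j\neq 0\}=\{j : c_j\neq 0\}$. This holds because $T'$ rescales the coordinates by arbitrary nonzero scalars whose product is $1$, and the determinant constraint is absorbed by projective rescaling. So the $T'$-orbits on $G(1,V)$ are indexed by the nonempty subsets of $\{1,\dots,n\}$, and there are $2^n-1$ of them. Since $T'\subset S$, every $S$-orbit is a union of $T'$-orbits, so $G(1,V)/S$ is finite.

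The main obstacle is Step 1, namely producing the common adapted basis, or equivalently a maximal torus inside $B\cap Q$. I would first try the quotient induction sketched above. If the lifting is awkward, I would fall back on the Bruhat decomposition: $\Lambda_\bullet=bw\Lambda^{std}_\bullet$ for some $b\in B$ and $w\in W$, where $\Lambda^{std}_\bullet$ is the standard $T$-fixed flag. Then $T'=bTb^{-1}$ fixes $\Lambda_\bullet$, since $T$ fixes the coordinate flag $w\Lambda^{std}_\bullet$, and $T'\subset B$.
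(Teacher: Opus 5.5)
Your proof is correct, and it takes a different and shorter route than the paper's. Both arguments start from the same normalization. After translating by an element of $B$ (which conjugates $S$ and carries $S$-orbits bijectively to orbits of the conjugate), $\Lambda_\bullet$ may be taken to be the $T$-fixed point of its Schubert cell, so $S\supset T$. This is exactly your Bruhat fallback, and it is how the paper begins.

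From there the paper does not rely on the torus alone. It writes $S$ explicitly as the pattern group $\{b\in B \mid b_{ij}=0 \text{ for } i<j,\ w^{-1}(i)>w^{-1}(j)\}$ and decomposes $\mathbb{P}(V)$ into the $B$-orbits $\Sigma_a^\circ$. It then shows that each $\Sigma_a^\circ$ meets at most $2^{|\bar I|}$ $S$-orbits: the root subgroups $1+cE_{ia}\subset S$ clear the coordinates outside $\bar I$, and the torus normalizes the remaining ones. You instead stop as soon as a maximal torus sits inside $S$, because $T'$ by itself already has only the $2^n-1$ coordinate-support orbits on $\mathbb{P}(V)$.

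Your route buys brevity and generality. It needs no description of $S$, and it applies verbatim to any subgroup of $B$ containing a maximal torus. It also sidesteps a step the paper states loosely. $S$ does not in general preserve the sets $O_I$: take $n=3$ and $\Lambda_1=\langle e_3\rangle$; then $1+\beta E_{12}\in S$ moves $[v_1:v_2:1]$ with $v_1v_2\neq 0$ into $O_{\{1\}}$. So ``$S$ acts transitively on $O_I$'' has to be read as ``each $O_I$ lies in a single $S$-orbit'', which is all the count needs.

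The paper's route buys finer information: an orbit bound and explicit representatives inside each $B$-orbit of $\mathbb{P}(V)$. That is closer to how the lemma is used to lift $B$-orbits fiber by fiber in the proof of Theorem~\ref{logsg}.

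On your Step 1: if you use the quotient induction instead of the Bruhat fallback, two details are needed. The induction must allow flags with a repeated term, since the image of $F_\bullet$ in $V/\Lambda_1$ is such a flag. Also, basis vectors lying in the image of $F_{d-1}$ must be lifted into $F_{d-1}$, which is possible because $F_{d-1}\cap\Lambda_1=0$.
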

\begin{proof}
Take a basis $e_1,\ldots,e_n$ of $V$ such that $T$ is the group of diagonal matrices and $B$ is the group of upper triangular matrices. Let $F_a$ denote the span of $e_1,\ldots,e_a$. Assume that the orbit $B\Lambda_\bullet$ is the Schubert cell $\Sigma_w^\circ$ in $F(1,\ldots,t;n)$ and that $\Lambda_\bullet$ is the unique $T$-fixed point in it. Then $S$ can be identified with
$$S=\{b\in B|b_{ij}=0 \text{ for any }(i,j)\text{ such that }i<j,w^{-1}(i)>w^{-1}(j)\}.$$
Each $B$-orbit in $G(1,V)$ can be identified with a Schubert cell
$$\Sigma^\circ_a:=\{v\in G(1,V)|v\subset F_a,v\not\subset F_{a-1}\}$$
We claim that for each Schubert cell, the orbit space $\Sigma_a^\circ/S$ is finite. Each $v\in \Sigma_a^\circ$ can be represented by a column vector such that $v_a=1$ and $v_j=0$ for $j>a$. Let $\bar{I}$ be the set of all $i$ such that $i<a$ and $b_{ia}=0$ for every $b\in S$. For each subset $I\subset \bar{I}$, set
$$O_I:=\{v\in \Sigma^\circ_a|v_i=0\text{ for }i\in I\text{ and }v_i\neq 0\text{ for }i\in \bar{I}-I\}.$$
It is then straightforward to check that the action of $S$ on each $O_I$ is transitive. Thus $|\Sigma^\circ_a/S|\leq 2^{|\bar{I}|}<\infty$. We conclude that $G(1,V)/S$ is finite.
\end{proof}

\begin{Lem}\label{SNC}
    Consider the Schubert variety $\Sigma_{a^\alpha;b^\beta}(F_\bullet)$ in $SF(k_1,k_2,\ldots,k_h;n)$, where $a_i=i$ for $1\leq i\leq k_h-1$, the sequence $\alpha_i$ is non-decreasing except possibly at $\alpha_{k_h}$ or $\beta_1$, and either $s=k_h$ with $\alpha_{k_h}=1$, or $s=k_h-1=b_1$ with $\beta_1=1$. Let $\pi_t:\Sigma_{a^\alpha;b^\beta}(F_\bullet)\rightarrow SG(k_t,n)$, $1\leq t\leq h$, be the canonical projection sending $(\Lambda_1,\ldots,\Lambda_h)$ to $\Lambda_t$. Then $\pi_1^{-1}(\partial \pi_1(\Sigma_{a^\alpha;b^\beta}))$ is an SNC divisor in $\Sigma_{a^\alpha;b^\beta}(F_\bullet)$.
\end{Lem}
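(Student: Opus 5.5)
We first identify $\pi_1(\Sigma_{a^\alpha;b^\beta})$ and its boundary. Since the $\alpha_i$ are non-decreasing away from the exceptional index, $\Lambda_1$ is constrained only through the condition coming from the unique index with label $1$. In the case $s=k_h$ with $\alpha_{k_h}=1$, the hypotheses $a_i=i$ force $k_1=1$, and $\pi_1(\Sigma)=\mathbb{P}(F_{a_{k_h}})$ is a linear subspace with boundary the hyperplane $\mathbb{P}(F_{a_{k_h}-1})$. In the case $s=k_h-1=b_1$ with $\beta_1=1$, we get $\pi_1(\Sigma)=\Sigma_{;b_1}\subset SG(1,n)$, i.e.\ $\mathbb{P}(F_{b_1}^\perp)$. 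Its boundary is the Schubert divisor, namely the hyperplane section $\mathbb{P}(F_{b_1+1}^\perp)$. When $b_1$ is small this boundary may also pick up the singular locus $\mathbb{P}(F_{b_1})$, as in the Example preceding Definition~\ref{resolution}; in that case we must describe the boundary as the full complement of the open cell. In either case the boundary is cut out in $\pi_1(\Sigma)$ by a single $B$-stable linear condition on $\Lambda_1$, possibly together with a second condition of the form $\Lambda_1\subset F_{b_1}$.

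We then describe the fibers of $\pi_1$. Here the key point is $a_i=i$ for $i\le k_h-1$: every $\Lambda_t$ with $t\ge 2$ must contain the fixed isotropic subspace $F_{k_t-1}$, or $F_{k_t-?}$ according to the rank conditions $\mu_{i,t}$. Hence, over a point $\Lambda_1$, the flag $(\Lambda_2,\dots,\Lambda_h)$ is determined by containing $\Lambda_1+F_{k_h-1}$, or is forced outright. Concretely, we show that $\Sigma$ is isomorphic to the incidence variety
\[
\{(\Lambda_1,\Lambda_h)\ :\ \Lambda_1\subset\Lambda_h,\ F_{k_h-1}\subset\Lambda_h\},
\]
with the intermediate $\Lambda_t$ equal to $F_{k_t-1}+\Lambda_1$ or $F_{k_t}$. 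We then project instead to $\Lambda_h$, which ranges over a smooth variety $Z$. This $Z$ is a projective space $\mathbb{P}(F_{a_{k_h}}/F_{k_h-1})$ in the first case, and a symplectic Grassmannian $SG(1,F_{k_h-1}^\perp/F_{k_h-1})$-type locus in the second. Over $Z$, $\Lambda_1$ ranges over $\mathbb{P}(\Lambda_h)$. Thus $\Sigma$ is a $\mathbb{P}^{k_h-1}$-bundle over $Z$, namely $\mathbb{P}(\mathcal{S})$ for the tautological bundle $\mathcal{S}$, and in particular it is smooth.

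Finally, we verify SNC for $\pi_1^{-1}(\partial)$. In the bundle description the condition $\Lambda_1\subset F_{k_h-1}$ (resp.\ $\Lambda_1\subset F_{a_{k_h}-1}$, $\Lambda_1\subset F_{b_1+1}^\perp$) defines a sub-projective bundle $\mathbb{P}(\mathcal{S}\cap \underline{F})$ of $\mathbb{P}(\mathcal{S})$. This sub-bundle has constant corank one, because $\Lambda_h\cap F_{k_h-1}$ or $\Lambda_h\cap F^\perp_{b_1+1}$ has dimension exactly $k_h-1$ over each point of $Z$ (or over $Z$ minus a smooth divisor). Each such component is therefore a smooth hypersurface, since it is a projective sub-bundle over a smooth base or over a smooth divisor of the base. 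Transversality of two such components is checked in local coordinates of the product structure: one is a fiberwise hyperplane and the other is pulled back from a smooth divisor of $Z$. This gives an SNC divisor.

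The main obstacle is the second case, where $\pi_1(\Sigma)=\mathbb{P}(F_{b_1}^\perp)$ is singular along $\mathbb{P}(F_{b_1})$. There the preimage of the boundary has two components: the fiberwise hyperplane $\{\Lambda_1\subset F_{b_1}=F_{k_h-1}\}$ and the pullback of the locus where $\Lambda_h$ meets $F_{b_1+1}^\perp$ nongenerically. One must check that the constant-rank claim $\dim(\Lambda_h\cap F_{b_1+1}^\perp)=k_h-1$ holds off a smooth divisor of $Z$, and that the two components meet transversally. We would first do this by writing explicit affine charts on $\mathbb{P}(\mathcal{S})$ adapted to the basis $e_1,\dots,e_n$.
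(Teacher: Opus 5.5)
\textbf{There is a genuine gap.} Your second paragraph makes a false structural claim, and everything after it depends on it. The rank conditions only give $F_{k_t-1}\subset\Lambda_t$, together with $\Lambda_{t-1}\subset\Lambda_t\subset\Lambda_{t+1}$. So when $\Lambda_1\subset F_{k_t-1}$, the space $\Lambda_t$ is \emph{not} forced to be $F_{k_t}$; it ranges over $\mathbb{P}(\Lambda_{t+1}/F_{k_t-1})$.

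A concrete counterexample: take $h=3$, $(k_1,k_2,k_3)=(1,2,3)$, in the first case. Over a point $(F_1,\Lambda_3)$ of your incidence variety, the choices of $\Lambda_2$ form a $\mathbb{P}^1$. In fact $\Sigma$ is the blow-up of that incidence variety along the codimension-two locus $\{\Lambda_1=F_1\}$.

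So for $h\ge 3$, $\Sigma$ is not a $\mathbb{P}^{k_h-1}$-bundle over $Z$. It is an iterated tower of projective bundles, with $\Lambda_t/F_{k_t-1}$ a point of $\mathbb{P}(\Lambda_{t+1}/F_{k_t-1})$. Correspondingly, $\pi_1^{-1}(\partial\pi_1(\Sigma))$ has $h$ irreducible components: $\{\Lambda_t\subset F_{k_{t+1}-1}\}$ for $1\le t\le h-1$, and the pullback of the hyperplane $\partial\pi_h(\Sigma)$. Your component $\{\Lambda_1\subset F_{k_h-1}\}$ is, on $\Sigma$, the reducible union of the first $h-1$ of these. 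Your transversality check (one fiberwise hyperplane against one divisor pulled back from $Z$) therefore only proves the lemma for $h=2$.

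\textbf{How to repair it.} Your local mechanism is the right one, but it must be applied one level at a time, i.e.\ by induction on $h$; this is what the paper does. Forgetting $\Lambda_1$ gives $\pi^1:\Sigma\to\pi^1(\Sigma)$, a $\mathbb{P}(\Lambda_2/F_{k_1-1})$-bundle over a Schubert variety of the same shape with $h-1$ steps. The hyperplane $F_{k_2-1}\subset\Lambda_2$ lies in $F_{a_{k_h}-1}$ (resp.\ $F_{k_h}^\perp$). Hence the intersection of $\Lambda_2$ with that space is either $\Lambda_2$ or $F_{k_2-1}$, which gives
\[
\pi_1^{-1}(\partial\pi_1(\Sigma))=\{\Lambda_1\subset F_{k_2-1}\}\cup(\pi^1)^{-1}\bigl(\pi_{(2)}^{-1}(\partial\pi_2(\Sigma))\bigr).
\]
The first piece is a fiberwise hyperplane sub-bundle. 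The second is the pullback of a divisor that is SNC by the inductive hypothesis. Your local product argument then gives SNC.

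\textbf{Two smaller corrections.}
First, $k_1=1$ is not forced. One may have $\alpha_1=\cdots=\alpha_{k_1-1}=1$, which only adds $F_{k_1-1}\subset\Lambda_1$; work modulo $F_{k_1-1}$.
Second, the ``main obstacle'' you single out does not arise. Isotropy imposes no condition on $\Lambda_1$ here, since every line is isotropic and $F_{b_1}^\perp\subset F_{k_1-1}^\perp$. So in the second case $\pi_1(\Sigma)\cong\mathbb{P}(F_{b_1}^\perp/F_{k_1-1})$ is a smooth projective space, and its boundary is the single hyperplane $\{\Lambda_1\subset F_{b_1+1}^\perp\}$.
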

\begin{proof}
    We use induction on $h$. For $1\leq t\leq h-1$, let $\pi^t:\Sigma_{a^\alpha;b^\beta}(F_\bullet)\rightarrow SF(k_{t+1},...,k_h;n)$ be the forgetful map sending $(\Lambda_1,\ldots,\Lambda_h)$ to $(\Lambda_{t+1},\ldots,\Lambda_h)$. Note that the images of $\Sigma_{a^\alpha;b^\beta}(F_\bullet)$ under $\pi_t$ and $\pi^t$ are Schubert varieties in the corresponding symplectic Grassmannians and symplectic flag varieties. In particular, $\pi_h(\Sigma_{a^\alpha;b^\beta})$ is isomorphic to $G(1,a_{k_h}-k_h+1)$ if $s=k_h$, and to $SG(1,n-2k_h+2)$ if $s=k_h-1$. Hence $\partial \pi_h(\Sigma_{a^\alpha;b^\beta})$ is smooth and irreducible.
    
     For each $2\leq t\leq h$, let $\pi_{(t)}: \pi^{t-1}(\Sigma_{a^\alpha;b^\beta})\rightarrow SG(k_t,n)$ be the natural projection. Suppose by induction that $\pi_{(2)}^{-1}(\partial\pi_2(\Sigma_{a^\alpha;b^\beta}))$ is an SNC divisor in $\pi^1(\Sigma_{a^\alpha;b^\beta})$. Notice that
    $$\pi_1^{-1}(\partial\pi_1(\Sigma_{a^\alpha;b^\beta}))=\tilde{D}\cup (\pi^{1})^{-1}(\pi_{(2)}^{-1}(\partial\pi_2(\Sigma_{a^\alpha;b^\beta}))),$$
    where $\tilde{D}$ is a locally trivial projective bundle over $\pi^1(\Sigma_{a^\alpha;b^\beta})$ with fiber $\mathbb{P}(F_{k_2-1}/F_{k_1-1})$. Since the pullback of an SNC divisor under a smooth morphism is again SNC, it follows that $$(\pi^{1})^{-1}(\pi_{(2)}^{-1}(\partial\pi_2(\Sigma_{a^\alpha;b^\beta})))$$ is an SNC divisor in $\Sigma_{a^\alpha;b^\beta}$. Moreover, $\tilde{D}$ is smooth, and it meets the pullback divisor transversely. Therefore $\pi_1^{-1}(\partial\pi_1(\Sigma_{a^\alpha;b^\beta}))$ is an SNC divisor in $\Sigma_{a^\alpha;b^\beta}$.
\end{proof}

\begin{Thm}\label{logsg}
  Assume $s=|a|=k-1$. Then $\pi:\Gamma_{a;b}(F_\bullet)\rightarrow \Sigma_{a;b}(F_\bullet)$ is a finite log resolution.
\end{Thm}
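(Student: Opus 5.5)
By Theorem \ref{smoothness}, $\Gamma_{a;b}(F_\bullet)$ is smooth and $\pi$ is a $B$-equivariant isomorphism over $\Sigma^\circ_{a;b}(F_\bullet)$. It therefore remains to verify conditions (1) (finiteness of $B$-orbits) and (3) (SNC boundary) of Definition \ref{finiteresolution}. With $s=k-1$ there is a single $b$-index $b_1$. Hence $z_\tau=\#\{j\mid b_1<a_\tau\}\in\{0,1\}$ for $\tau\le s$, and $z_k=1$. The table of Remark \ref{tableS} then has only one middle column $W^{\tau,1}$, present exactly for $\tau> x_1$, where $x_1=\#\{i\mid a_i\le b_1\}$. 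So a point of $\Gamma_{a;b}$ is a full isotropic flag $\Lambda_1\subset\cdots\subset\Lambda_k$ together with a chain $W^{x_1+1,1}\subset\cdots\subset W^{k,1}$ satisfying
$$F_{b_1}\subset W^{\tau,1},\qquad \Lambda_\tau\subset W^{\tau,1}\subset F_{a_\tau}\ (\text{resp. }F_{b_1}^\perp\text{ for }\tau=k).$$
I will compare $\Gamma_{a;b}$ with the Schubert variety $\Sigma'$ in $SF(1,\dots,k;n)$ given by the bottom-left column, i.e.\ the image of the natural projection $\pi':\Gamma_{a;b}\to SF(1,\dots,k;n)$, $(\Lambda_\bullet,W^{\bullet,1})\mapsto\Lambda_\bullet$. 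This mirrors the proof of Theorem \ref{4.4} and the $\Sigma_{3;1}$ example.

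\textbf{Finiteness of $B$-orbits.} $\Sigma'$ has finitely many $B$-orbits by the Bruhat decomposition, so it suffices to show that over each $T$-fixed point $\Lambda_\bullet$ of a cell of $\Sigma'$, the fiber of $\pi'$ has finitely many orbits under $S=\operatorname{Stab}_B(\Lambda_\bullet)$. Since $\dim W^{\tau,1}=\dim\Lambda_\tau+\dim F_{b_1}-\dim(\Lambda_\tau\cap F_{b_1})$ generically, the fiber is obtained by successively choosing:
\begin{itemize}
\item for $\tau$ with $\Lambda_\tau\cap F_{b_1}$ too large, a hyperplane-type extension inside $F_{a_\tau}$ or $F_{b_1}^\perp$;
\item at the last step, possibly an isotropic line in $F_{b_1}^\perp/W^{k-1,1}$, as in the example.
\end{itemize}
Each choice is a point of a projective space $\mathbb{P}(U)$ or isotropic Grassmannian $SG(1,U)$, where $U$ is a quotient of coordinate subspaces. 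I will show, as in the $\Sigma_{3;1}$ example, that the relevant stabilizer acts on each successive fiber through a group containing the torus and enough root subgroups that Lemma \ref{orbit} applies, or that the fiber is itself a Schubert variety for a smaller symplectic group. The orbits then stay finite at each stage, and an induction on $\tau$ finishes. This is the main obstacle. The delicate point is that after fixing $W^{\tau,1}$ the stabilizer shrinks, so I must check that the stabilizer of the partial data still contains the upper-triangular group of the next quotient (or the torus plus the Borel of the relevant $Sp$). I would first try to choose all intermediate data to be $T$-fixed orbit representatives, so that the remaining stabilizer is again a "pattern" subgroup of $B$ of the form appearing in the proof of Lemma \ref{orbit}.

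\textbf{SNC boundary.} The boundary $D=\Gamma\setminus\pi^{-1}(\Sigma^\circ)$ splits into two parts:
\begin{itemize}
\item the pullback under $\pi'$ of the boundary of $\Sigma'$;
\item the loci where some $W^{\tau,1}$ fails to equal $\operatorname{span}(\Lambda_\tau,F_{b_1})$, i.e.\ $\Lambda_\tau\cap F_{b_1}\neq\Lambda_{\tau-1}\cap F_{b_1}$ at the wrong step.
\end{itemize}
For the first part I invoke Lemma \ref{SNC}, after arranging via the tower structure of Theorem \ref{smoothness} that the relevant flag-variety image satisfies its hypotheses ($a_i=i$ after quotienting, and $s=k_h-1=b_1$ with $\beta_1=1$). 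Since the tower maps are smooth fibrations, the pullback of an SNC divisor is SNC. Each component of the second part is a projective sub-bundle in one step of the tower of Theorem \ref{smoothness}, namely $\{\Lambda_\tau\subset W^{\tau-1,1}\}$ or $\{\Lambda_\tau\subset F_{a_{\tau-1}}\}$. It is therefore smooth and transverse to the pullbacks of the earlier divisors, exactly as in the inductive argument of Lemma \ref{SNC}. Finally, I would confirm that these components exhaust $D$ by comparing them with the codimension-one $B$-orbit closures found in the orbit count.
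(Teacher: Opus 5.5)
\textbf{The gap is in the SNC step.} You want to obtain the first part of $D$ as the pullback of an SNC divisor along smooth maps. But the map actually in play, $\pi'\colon\Gamma_{a;b}(F_\bullet)\to\Sigma'$, is birational rather than a fibration, and $\Sigma'$ is in general singular. So there is no SNC divisor on $\Sigma'$ to pull back, and no smooth map to pull it back along.

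Already in the paper's example $\Sigma_{3;1}\subset SG(2,n)$ with $n\ge 6$, the variety $\Sigma'=\Sigma_{3^1;1^2}$ is, in suitable local coordinates at $(F_1,F_2)$, the rank-four quadric cone $x_2y_{n-1}+x_3y_{n-2}=0$.

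Moreover $\pi^{-1}(\Sigma^\circ_{a;b})=(\pi')^{-1}((\Sigma')^\circ)$. So your first part is already all of $D$, and your two parts are not complementary. In the same example:
\begin{itemize}
\item $D$ contains $\pi'$-exceptional components such as $\{\Lambda_2\subset W^{1,1}\}$, a divisor of dimension $n-3$ that $\pi'$ contracts onto a surface.
\item Components such as $\{W^{1,1}=F_2\}$ and $\{W^{2,1}\subset F_3^\perp\}$ are strict transforms of divisors on $\Sigma'$, not pullbacks.
\end{itemize}
Lemma~\ref{SNC} does not apply to $\Sigma'$ either. Its hypothesis $a_i=i$ for $i\le k_h-1$ is met by a row of the table in Remark~\ref{tableS}, where $F_{b_1}\subset W^{\tau,1}$ fixes the first $b_1$ steps. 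It is not met by the $\Lambda$-column.

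\textbf{What works.} The paper uses only the smooth maps of the tower of Theorem~\ref{smoothness} and inducts on rows.
\begin{itemize}
\item Forgetting $(\Lambda_k,W^{k,\cdot})$ is a smooth fibration.
\item The boundary is the pullback of the boundary one stage down, which is SNC by induction, together with one new divisor $D'$. The restriction of $D'$ to each fiber is the divisor of Lemma~\ref{SNC}.
\item The base case is the first-row variety $\tilde\Gamma^1$, again handled by Lemma~\ref{SNC}.
\end{itemize}
Alternatively, you could extend your second-part argument to every component of $D$. That means realizing each as a relative hyperplane sub-bundle at one step of that tower and checking transversality stage by stage. Either way, the projection to the $\Lambda$-column cannot be the map you pull back along.

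\textbf{Finiteness of orbits.} This part of your proposal is the paper's argument: project to $\Gamma^{0,0}=\Sigma'$ and add the $W^{\tau,1}$ one at a time via Lemma~\ref{orbit}. The paper invokes Lemma~\ref{orbit} at each stage without addressing the shrinking-stabilizer issue you flag, so it does not close that point for you. Your proposed remedy of using $T$-fixed representatives is also unavailable in general. In the proof of Lemma~\ref{orbit}, the orbits $O_I$ with $I\subsetneq\bar I$ contain no $T$-fixed point.
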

\begin{proof}
By Theorem \ref{smoothness}, it suffices to show that $\Gamma_{a;b}(F_\bullet)$ has finitely many $B$-orbits and that $\pi^{-1}(\partial \Sigma_{a;b}(F_\bullet))$ is an SNC divisor in $\Gamma_{a;b}(F_\bullet)$.

We first show that $\Gamma_{a;b}(F_\bullet)$ has finitely many $B$-orbits. Set $W^{0,1}=F_{b_1}$. We use induction on $\tau$. Let $\Gamma^{\tau,1}$ be the image of $\Gamma_{a;b}(F_\bullet)$ under the projection sending $(\Lambda_1,...,\Lambda_k,W^{1,1},W^{2,1}...,W^{k,1})$ to $(\Lambda_1,...,\Lambda_k,W^{1,1},W^{2,1},...,W^{\tau,1})$.

    Note that $\Gamma^{0,0}$ is the Schubert variety defined by the same sequence with the upper index $(1,2,\ldots,k)$ in the symplectic partial flag variety $SF(1,\ldots,k;n)$, and hence has finitely many $B$-orbits by the Bruhat decomposition.

    If $\tau=1$, consider the natural projection
    \begin{eqnarray}
    \Gamma^{1,1}&\longrightarrow&\Gamma^{0,0}\nonumber\\
        (\Lambda,...,\Lambda_k,W^{1,1})&\mapsto&(\Lambda_1,...,\Lambda_k).\nonumber
    \end{eqnarray}
    We claim that every $B$-orbit in $\Gamma^{0,0}$ lifts to finitely many $B$-orbits in $\Gamma^{1,1}$. Assume $a_1>b_1>0$; otherwise $W^{1,1}=0$, and the statement is trivial. Let $O$ be a $B$-orbit in $\Gamma^{0,0}$, and let $(\Lambda_1,\ldots,\Lambda_k)\in O$ be the unique $T$-fixed point. If $\Lambda_1\not\subset F_{b_1}$, then $W^{1,1}=\text{span}(\Lambda_1,F_{b_1})$ is uniquely determined, and the claim follows. If $\Lambda_1\subset F_{b_1}$, then $W^{1,1}$ can be any subspace of dimension $b_1+1$ such that
    $$F_{b_1}\subset W^{1,1}\subset F_{a_1}\cap \Lambda_k^\perp.$$
    Intersecting the partial flag $\Lambda_\bullet$ with $F_{a_1}$ and then quotienting by $F_{b_1}$, the claim follows from Lemma \ref{orbit}.

Now assume inductively that $|\Gamma^{\tau',\phi'}/B|$ is finite for all earlier stages. If $\Lambda_\tau\not\subset W^{\tau-1,\phi}$, then $W^{\tau,\phi}=\text{span}(\Lambda_\tau,W^{\tau-1,\phi})$ is uniquely determined. If $\Lambda_\tau\subset W^{\tau-1,\phi}$, then $W^{\tau,\phi}$ is parametrized by the Grassmannian $G(1,a_\tau-\dim(W^{\tau-1,\phi}))$ if $\tau\leq s$, or by $G(1,n-b_{k-\tau+1}-\dim(W^{\tau-1,\phi}))$ if $\tau>s$. By Lemma \ref{orbit}, each $B$-orbit in $\Gamma^{\tau-1,\phi}$ (or in $\Gamma^{k,\phi-1}$ if $\tau=1$) lifts to finitely many $B$-orbits in $\Gamma^{\tau,\phi}$. Therefore, by induction, $\Gamma_{a;b}(F_\bullet)$ has finitely many $B$-orbits.

 We next show that the complement of $\pi^{-1}(\Sigma_{a;b}^\circ(F_\bullet))$ in $\Gamma_{a;b}(F_\bullet)$ is an SNC divisor. We again use induction. First consider the case $\tau=1$, and assume first that $s\neq 0$. Consider the following Schubert variety in $SF(1,b_1+1,\ldots,b_{z_1}+1;n)$:
    $$\tilde{\Gamma}^1:=\{(\Lambda_1,W^{1,1},...,W^{1,z_1})|F_{b_j}\subset W^j\subset F_{a_1},  1\leq j\leq z_1\}.$$
    It is the image of $\Gamma_{a;b}(F_\bullet)$ under the projection forgetting $\Lambda_\tau$ and $W^{\tau,j}$ for $\tau>1$. Let $\pi_1:\tilde\Gamma^1\rightarrow SG(1,n)$ be the first projection. By Lemma \ref{SNC}, $\pi_1^{-1}(\partial\pi_1(\tilde{\Gamma}^1))$ is an SNC divisor in $\tilde{\Gamma}^1$. If $s=0$, then replacing $F_{a_1}$ by $F_{b_{k-s}}^\perp$, the same argument applies.

    Now let $\pi'$ be the projection from $\Gamma_{a;b}(F_\bullet)$ obtained by forgetting $\Lambda_k$ and $W^{k,j}$ for $1\leq j\leq z_k$. By induction, we may assume that $D:=(\pi')^{-1}(\pi'(\Gamma_{a;b}(F_\bullet))\backslash\pi'(\Gamma_{a;b}(F_\bullet))^\circ)$ is an SNC divisor. Then
    $$\pi^{-1}(\Sigma_{a;b}(F_\bullet)\backslash\Sigma_{a;b}^\circ(F_\bullet))=D\cup D',$$
where $D'$ is the divisor in $\Gamma_{a;b}(F_\bullet)$ such that $D'\cap(\pi')^{-1}(\Lambda_1,...,\Lambda_{k-1},W^{1,1},...,W^{k-1,z_{k-1}})$ is the Schubert variety
$$\{(\Lambda_k,W^{k,1},...,W^{k,z_k})|\Lambda_{k-1}\subset\Lambda_k\subset W^{k,1},W^{k-1,j}\subset W^{k,j}\subset W^{k,j+1}\}.$$
By Lemma \ref{SNC}, the divisor $D'$ restricts to an SNC divisor on each fiber of $\pi'$. Therefore $D\cup D'$ is an SNC divisor in $\Gamma_{a;b}(F_\bullet)$.
\end{proof}
\begin{Cor}
Assume $s=|a|=k-1$. Then each homogeneous component of $c_{SM}(\Sigma_{a;b}^\circ(F_\bullet))$ is Hodge rigid.
\end{Cor}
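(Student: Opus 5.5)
This corollary follows by combining Theorem \ref{logsg} with Theorem \ref{hodgerigid}; the only work is to check that the hypotheses match.

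First, we identify $\Sigma_{a;b}(F_\bullet)$ with a $B$-stable Schubert variety $X_P(w)$ in $Sp(n)/P$, where $P$ is the maximal parabolic subgroup with $SG(k,n)\cong Sp(n)/P$. We use the basis $e_1,\dots,e_n$ adapted to $F_\bullet$ fixed before Theorem \ref{logsg}, with $B$ the upper triangular matrices. This is the standard correspondence recorded in the Remark of \S\ref{2.1.7}. Under this identification, the Schubert cell $\Sigma^\circ_{a;b}(F_\bullet)$ is the open dense $B$-orbit $X^\circ$. This holds because the defining conditions $\dim(\Lambda\cap F_{a_i})=i$ and $\dim(\Lambda\cap F_{b_j}^\perp)=k-j+1$ are $B$-invariant and cut out a single $B$-orbit.

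Second, since $s=|a|=k-1$, Theorem \ref{logsg} shows that $\pi:\Gamma_{a;b}(F_\bullet)\to\Sigma_{a;b}(F_\bullet)$ is a finite log resolution in the sense of Definition \ref{finiteresolution}. Concretely:
\begin{itemize}
\item $\pi$ is proper and $B$-equivariant, because $B$ acts componentwise and preserves each $F_i$ and $F_j^\perp$;
\item $\Gamma_{a;b}(F_\bullet)$ is smooth and has finitely many $B$-orbits;
\item $\pi$ is an isomorphism over $\Sigma^\circ_{a;b}(F_\bullet)$;
\item the boundary $\pi^{-1}(\partial\Sigma_{a;b})$ is an SNC divisor.
\end{itemize}

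Third, applying Theorem \ref{hodgerigid} to $X=\Sigma_{a;b}(F_\bullet)$ with $Y=\Gamma_{a;b}(F_\bullet)$ gives that each homogeneous component $c_{SM}(\Sigma^\circ_{a;b}(F_\bullet))_k$ is represented by an irreducible subvariety of $\Sigma_{a;b}(F_\bullet)$. Via the closed embedding into $SG(k,n)$, this is also an irreducible subvariety of $SG(k,n)$, which is exactly Hodge rigidity.

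The only step needing care is the identification of the cell with the $B$-orbit and of the boundary with $\pi^{-1}(\Sigma_{a;b}\setminus\Sigma^\circ_{a;b})$, so that the divisor in Definition \ref{finiteresolution}(3) is the one that Theorem \ref{logsg} proves is SNC. We take this from the standard Bruhat description of Schubert cells in $SG(k,n)$.
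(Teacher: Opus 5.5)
Your proposal is correct and matches the paper's proof, which obtains the corollary directly by combining Theorem \ref{logsg} (the finite log resolution $\Gamma_{a;b}(F_\bullet)\to\Sigma_{a;b}(F_\bullet)$ when $s=k-1$) with Theorem \ref{hodgerigid}. One minor caveat: the equalities in Definition \ref{2.8} do not by themselves cut out a single $B$-orbit (e.g.\ in $SG(1,n)$ with $s=0$, the condition $\dim(\Lambda\cap F_{b_1}^\perp)=1$ holds on all of $\mathbb{P}(F_{b_1}^\perp)$), so the identification of $\Sigma^\circ_{a;b}(F_\bullet)$ with the open $B$-orbit should rest on the standard Bruhat description, as you ultimately do, rather than on that sentence.
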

\begin{proof}
    Follows from Theorem \ref{logsg} and Theorem \ref{hodgerigid}.
\end{proof}

\subsection{Singular loci of Schubert varieties}\label{singularloci}

Another application of our construction of resolutions is the computation of the singular loci of Schubert varieties.

Let $E$ be the exceptional locus of $\pi:\Gamma_{a;b}(F_\bullet)\rightarrow \Sigma_{a;b}(F_\bullet)$.
\begin{Lem}\label{component}
    Assume that $Z$ is a component of $E$ of codimension at least $2$ in $\Gamma_{a;b}(F_\bullet)$. Then $\pi(Z)$ is contained in the singular locus of $\Sigma_{a;b}(F_\bullet)$.
\end{Lem}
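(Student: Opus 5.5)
We argue by contraposition. Suppose $x\in\pi(Z)$ is a smooth point of $\Sigma:=\Sigma_{a;b}(F_\bullet)$; we will show that $\pi$ is an isomorphism near $\pi^{-1}(x)$, which contradicts $x\in\pi(E)$.

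The key input is the classical purity of the exceptional locus (van der Waerden purity). Let $\pi:\Gamma\to\Sigma$ be a proper birational morphism with $\Gamma$ smooth, here $\Gamma=\Gamma_{a;b}(F_\bullet)$ by Theorem \ref{smoothness}. If $x\in\Sigma$ is a smooth point, or even just a normal $\mathbb{Q}$-factorial point, then every component of the exceptional locus meeting $\pi^{-1}(x)$ has codimension one in $\Gamma$.

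The steps are as follows.

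First, note that Schubert varieties are normal, so by Zariski's Main Theorem $\pi$ has connected fibers. Hence $E=\pi^{-1}(\pi(E))$, and $E$ is exactly the locus of points where $\pi$ fails to be a local isomorphism.

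Second, let $U\subset\Sigma$ be the smooth locus and work over $U$. Because $U$ is smooth, it is locally factorial. We then run the standard determinant argument:
\begin{itemize}
\item Pick a point $y\in E\cap\pi^{-1}(U)$. Locally, the Jacobian determinant of $\pi$ with respect to local coordinates on $U$ and on $\Gamma$ defines a section of $K_\Gamma\otimes\pi^*K_U^{-1}$.
\item This section vanishes exactly along the locus where $\pi$ is not étale.
\item Since $\pi$ is birational and $U$ is normal, Zariski's Main Theorem gives that $\pi$ fails to be a local isomorphism precisely where it fails to be quasi-finite, which is precisely where it fails to be étale.
\item The vanishing locus of a single section of a line bundle is a divisor.
\end{itemize}
It follows that $E\cap\pi^{-1}(U)$ is of pure codimension one in $\pi^{-1}(U)$.

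Third, conclude. Suppose $\pi(Z)\not\subset\Sigma\setminus U$. Then $Z\cap\pi^{-1}(U)$ is a nonempty open subset of $Z$, and it is a component of $E\cap\pi^{-1}(U)$. By the second step it has codimension $1$, contradicting the hypothesis that $Z$ has codimension at least $2$. Hence $\pi(Z)\subset\Sigma\setminus U=\operatorname{Sing}(\Sigma)$.

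I expect the main obstacle to be the second step, namely justifying that the non-étale locus coincides with the non-isomorphism locus. The point needing care is that "not quasi-finite" and "Jacobian vanishes" must be identified over the smooth locus. If $\pi$ is quasi-finite at $y$, then Zariski's Main Theorem, together with birationality and normality of $U$, makes $\pi$ a local isomorphism at $y$, so the Jacobian is nonzero there. Conversely, a point where the Jacobian is nonzero is étale, hence quasi-finite. With this identification in hand, the remaining steps are formal. I would simply cite van der Waerden's purity theorem (e.g.\ as in Shafarevich or Debarre) rather than reprove it.
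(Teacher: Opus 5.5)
Your proof is correct, but it takes a different route from the paper's.

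\textbf{Your route.} You prove purity of the exceptional locus over the smooth locus $U$ directly. On $\pi^{-1}(U)$, the section $\det(d\pi)$ of $\omega_\Gamma\otimes\pi^*\omega_U^{-1}$ is nonzero because $\pi$ is birational. Its zero locus is exactly the non-\'etale locus. Zariski's Main Theorem (birational, normal target) identifies that locus with $E\cap\pi^{-1}(U)$. Hence every component of $E$ meeting $\pi^{-1}(U)$ has codimension one.

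\textbf{The paper's route.} The paper argues with divisors instead. It takes a smooth neighborhood $U$ of a general point of $\pi(Z)$ over which $Z$ is the only exceptional component. Since $Z$ has codimension at least $2$, $\pi_*$ identifies $\mathrm{Div}(\pi^{-1}(U))$ with $\mathrm{Div}(U)$, and every divisor on $U$ is Cartier. So every divisor $D$ upstairs would equal $\pi^*\pi_*D$ and have degree zero on contracted curves. This is absurd for a divisor that is positive on an exceptional curve, such as an ample divisor on the projective variety $\Gamma$.

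\textbf{What each route buys.}
\begin{itemize}
\item The paper's method only uses that Weil divisors on $U$ are Cartier, so it extends to $\mathbb{Q}$-factorial points. In exchange it needs a divisor positive on a contracted curve.
\item As written, the paper's choice of $U$ with $\pi^{-1}(U)\cap Z=\pi^{-1}(U)\cap E$ tacitly rules out another component $Z'$ of $E$ with $\pi(Z')\supseteq\pi(Z)$. A general choice of point cannot avoid such a $Z'$.
\item Your Jacobian argument needs genuine smoothness of $U$, which is all the lemma requires. It uses no projectivity and no test curves.
\item Your argument treats all components of $E$ over $U$ at once, so it has no issue with a component like $Z'$.
\item Your argument also makes explicit why the two usual definitions of $E$ agree: Schubert varieties are normal, so $\pi$ has connected fibers.
\end{itemize}
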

\begin{proof}
    Let $p$ be a general point of $\pi(Z)$. Assume, for a contradiction, that $p$ is a smooth point of $\Sigma_{a;b}(F_\bullet)$. Let $U$ be an open neighborhood of $p$ such that $U$ is smooth and $\pi^{-1}(U)\cap Z=\pi^{-1}(U)\cap E$. Since $\pi^{-1}(U)$ is smooth, every Weil divisor on $\pi^{-1}(U)$ is Cartier. By assumption, $\pi^{-1}(U)\cap Z$ has codimension at least $2$ in $\pi^{-1}(U)$. Hence the pushforward of divisors induces an isomorphism $\pi_*: \text{Div}(\pi^{-1}(U))\cong\text{Div}(U)$. On the other hand, if $\pi^{-1}(U)\to U$ contracts a divisor only in codimension at least $2$, then a Cartier divisor on $\pi^{-1}(U)$ that has nontrivial degree on an exceptional curve cannot descend to $U$. Thus not every Cartier divisor on $\pi^{-1}(U)$ can come from $U$, contradicting the above isomorphism. Therefore $U$ cannot be smooth. This contradiction shows that $\pi(Z)$ is contained in the singular locus of $\Sigma_{a;b}(F_\bullet)$.
\end{proof}

From the perspective of Lemma \ref{component}, we first determine all components of $E$ of codimension at least $2$. Note that the image of the exceptional locus of $\pi$ is a union of subvarieties of the following types:
\begin{itemize}
    \item (I): The Zariski closure of $\{\Lambda\in \Sigma_{a;b}(F_\bullet)|\dim(\Lambda\cap F_{b_j})=x_j+1\}$, which we denote by $\Sigma(F_{b_j})$;
    \item (II): The Zariski closure of $\{\Lambda\in \Sigma_{a;b}(F_\bullet)|\dim(\Lambda\cap F_{a_i})=i+1\}$, which we denote by $\Sigma(F_{a_i})$;
    \item (III): The Zariski closure of $\{\Lambda\in \Sigma_{a;b}(F_\bullet)|\dim(\Lambda\cap F_{b_j}^\perp)=k-j+2\}$, which we denote by $\Sigma(F_{b_j}^\perp)$;
\end{itemize}

\begin{Thm}
Let $C$ be an irreducible component of $E$. If $\pi(C)$ is a subvariety of one of the following types, then $C$ has codimension at least $2$, and therefore $\pi(C)$ is contained in the singular locus of $\Sigma_{a;b}(F_\bullet)$:
\begin{itemize}
\item (I) $\Sigma(F_{b_j})$ when $b_j<a_s$;
\item (II) $\Sigma(F_{a_i})$, unless $i=s$, $a_s=\frac{n}{2}$, and $x_{k-s}=s+1-\frac{n-2b_{k-s}}{2}$;
\item (III) $\Sigma(F_{b_j}^\perp)$ for all $2\leq j\leq k-s$.
\end{itemize}
\end{Thm}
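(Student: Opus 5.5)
The plan is a dimension count. For each type, I compute $\dim C$ and compare it with $\dim \Gamma_{a;b}(F_\bullet)=\dim\Sigma_{a;b}(F_\bullet)$, using the explicit tower structure from the proof of Theorem \ref{smoothness}. Once $\operatorname{codim} C\geq 2$ is known, Lemma \ref{component} gives $\pi(C)\subset \operatorname{Sing}\Sigma_{a;b}(F_\bullet)$.

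Write $C$ as the closure of the locus over a general point $p\in\pi(C)$. Then
$$\dim C=\dim \pi(C)+\dim \pi^{-1}(p).$$
I compute the two terms separately.

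\emph{The image.} Each of $\Sigma(F_{b_j})$, $\Sigma(F_{a_i})$, $\Sigma(F_{b_j}^\perp)$ is, generically, a Schubert variety in $SG(k,n)$ with a modified index. For type (II), $a_i$ is replaced so that $\dim(\Lambda\cap F_{a_i})$ jumps by one: one lowers $a_i$ or shifts an element between $a$ and $b$. Types (I) and (III) are analogous. So $\operatorname{codim}(\pi(C),\Sigma_{a;b})$ is the length difference of the two Weyl group elements. I compute it by counting dimensions of the free choices in the standard parametrization of the Schubert cell. In the generic case it should be $1$, or the excess incidence forces it to be larger.

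\emph{The fiber.} Over a general $p\in\pi(C)$ the fiber is read off the table of Remark \ref{tableS}. Exactly one incidence becomes non-generic. For type (I) it is $\Lambda_\tau\subset W^{\tau-1,j}$ where normally $W^{\tau,j}=\operatorname{span}(\Lambda_\tau,F_{b_j})$. For type (II) it is $\Lambda\cap F_{a_i}$ becoming too big. For type (III) it is the analogous condition for $F_{b_j}^\perp$. The entries $W^{\tau,j}$ (or $\Lambda_\tau$) affected by the jump are then no longer determined. They vary in a Grassmannian or projective space $\mathbb{P}(\cdot)$, of the kind appearing in cases (I-2), (II-2), (III-2), (III-3) of the proof of Theorem \ref{smoothness}. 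Its dimension is the dimension jump minus one, computed from the dimensions of $W^{\tau-1,j}$, $W^{\tau,j+1}$, $F_{a_\tau}$ and $F_{b_{k-\tau+1}}^\perp\cap\Lambda^\perp$.

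\emph{Comparison.} The codimension of $C$ equals $\operatorname{codim}\pi(C)-\dim\pi^{-1}(p)$, and I show this is at least $2$ in each case. In type (I), the hypothesis $b_j<a_s$ ensures the freed subspace $W^{\tau,j}$ lives inside some $F_{a_\tau}$ with room beyond $F_{b_j}$. The extra condition on $\Lambda\cap F_{b_j}$ then costs at least two more dimensions in the base than the fiber gains. Type (III) with $j\geq2$ works the same way, since $F_{b_j}^\perp\subsetneq F_{b_{j-1}}^\perp$ leaves slack. For type (II), $\Lambda\cap F_{a_i}$ acquiring an extra vector is normally codimension $a_{i+1}-a_i$ or more, which exceeds the fiber dimension plus one. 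The exception occurs when $i=s$ and $a_s=n/2$, so $F_{a_s}$ is Lagrangian. There the isotropy condition $\Lambda\subset \Lambda^\perp$ merges the conditions coming from $F_{a_s}$ and from $F_{b_{k-s}}^\perp$. The numerical condition $x_{k-s}=s+1-\frac{n-2b_{k-s}}{2}$ is exactly the equality case, where $C$ becomes a divisor.

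The main obstacle will be the uniform bookkeeping for type (II), specifically identifying the exact excluded case. I would first treat $a_s<n/2$ by a direct incidence count. I would then handle $a_s=n/2$ by computing
$$\dim\bigl(F_{n/2}\cap F_{b_{k-s}}^\perp\bigr)\quad\text{and}\quad \dim\bigl(\Lambda\cap F_{b_{k-s}}\bigr)=x_{k-s},$$
and checking when the codimension drops to $1$.
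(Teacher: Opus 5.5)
Your strategy is the paper's: for each type, compute $\operatorname{codim}\pi(C)$ and the dimension of a general fiber, subtract, and invoke Lemma \ref{component}. The gap is that you never carry out the counts, and the heuristics you substitute for them do not give the inequality. In type (III) the inequality cannot be obtained at all.

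\textbf{Type (III) is false as stated.} The ``slack'' between $F_{b_j}^\perp$ and $F_{b_{j-1}}^\perp$ can be a single dimension, and then the count gives codimension $1$. Take $k=2$, $s=0$, $(b_1,b_2)=(0,1)$.
Then $\Sigma_{;1,0}(F_\bullet)=SG(2,n)$, which is smooth.
For $j=2$, $\Sigma(F_{b_2}^\perp)=\{\Lambda\subset F_1^\perp\}$ has dimension $2n-7$, so its codimension is $2$.
Over a general such $\Lambda$, the fiber of $\pi$ is the choice of the line $\Lambda_1\subset\Lambda$, a $\mathbb{P}^1$; the $W$'s are then determined ($W^{1,2}=\Lambda_1+F_1$).
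So the component $C$ of $E$ over $\Sigma(F_{b_2}^\perp)$ is a divisor, and $\pi(C)$ lies in the smooth locus.
The paper's own type (III) count, $b_j-b_{j-1}+x_{j-1}-x_j+j-1$, gives $2$ here. This is because its image codimension $j_0+b_j-b_{j-1}+x_{j-1}-x_j$ evaluates to $3$, whereas the true codimension is $2$. So no completion of your sketch can prove type (III) without an extra hypothesis.

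\textbf{Other steps where the heuristics fail.} First, an exceptional component never lies over a divisor. By your own formula, $\operatorname{codim}\pi(C)=1$ together with a positive-dimensional fiber would give $\operatorname{codim}C\le 0$. So ``in the generic case it should be $1$'' cannot hold for $\pi(C)$. Whenever a locus $\Sigma(F_\cdot)$ is a divisor of $\Sigma_{a;b}(F_\bullet)$, $\pi$ is an isomorphism over its general point.

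Second, in type (II) the fiber is not controlled by one incidence.
Let $a_{i_0},\dots,a_i$ be the maximal run of consecutive values ending at $a_i$.
The fiber is then the set of $\Lambda_i$ with $\Lambda_{i_0-1}\subset\Lambda_i\subset\Lambda_{i+1}$, which is a $\mathbb{P}^{i-i_0+1}$.
Its dimension is not bounded in terms of $a_{i+1}-a_i$, so your bound ``codimension $a_{i+1}-a_i$ or more, which exceeds the fiber dimension plus one'' fails.

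The paper instead uses that the whole run is pushed down: $F_{a_{i+1}}$ is replaced by an $(a_{i_0}-1)$-dimensional subspace.
This gives $\operatorname{codim}\pi(C)=a_{i+1}-a_{i_0}+1$, so the run length cancels and $\operatorname{codim}C=a_{i+1}-a_i$.
That is at least $2$ only after two reductions your proposal omits:
\begin{enumerate}
\item pass to the last element of the run, using $\Sigma(F_{a_i})=\Sigma(F_{a_{i'}})$ when $a_{i'}-a_i=i'-i$;
\item discard the case $a_i\le b_j<a_{i+1}$, where $\Sigma(F_{a_i})\subset\Sigma(F_{b_j})$.
\end{enumerate}

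For $i=s$ the paper gets $\operatorname{codim}C=n-b_{k-s}+x_{k-s}-s-a_s$.
It then uses $s-x_{k-s}\le\frac n2-b_{k-s}-1$, which holds because the $a$'s above $b_{k-s}$ avoid $b_{k-s}+1$.
This yields $\operatorname{codim}C\ge\frac n2-a_s+1$, with equality exactly in the excluded case. That is the precise form your ``Lagrangian'' remark needs to take.
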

\begin{proof}
\textbf{(I-1-1)} Assume $b_j<a_s$, $b_j\neq a_i$ for all $i$, and $b_j\neq b_{j-1}-1$. Set $\tau_0:=x_j+1$. Further assume that there is no $j'>j$ such that $b_{j'}<a_{\tau_0}$; otherwise $\Sigma(F_{b_j})\subset \Sigma(F_{b_{j'}})$. Under this assumption, we must have $W^{\tau_0-1,j}=W^{\tau_0,j+1}=0$. The subvariety $\Sigma(F_{b_j})$ is the Schubert variety whose defining sequence is obtained by replacing $F_{a_{\tau_0}}$ with $F_{b_j}$ and replacing $F_{b_j}^\perp$ with $F_{a_{\tau_0}}^\perp$. Over a general point $\Lambda\in \Sigma(F_{b_j})$, the spaces $\Lambda_\tau$, $1\leq \tau\leq k$, are uniquely determined, and the spaces $W^{\tau,t}$ are uniquely determined unless $t=j$. Notice that $W^{\tau_0,j}$ can be chosen to be any subspace of dimension $b_j+1$ such that
$$\text{span}(\Lambda_{\tau_0},F_{b_j})\subset W^{\tau_0,j}\subset F_{a_{\tau_0}}.$$
Once $W^{\tau_0,j}$ is chosen, all the other $W^{\tau,j}$ are uniquely determined by the relation $W^{\tau,j}=\text{span}(W^{\tau_0,j},\Lambda_\tau)$. 
Thus
\begin{eqnarray}
    \dim(\pi^{-1}(\Lambda))&=&\dim(G(1,a_{\tau_0}-\dim(F_{b_j})))\nonumber\\
    &=&a_{\tau_0}-b_j-1.\nonumber
\end{eqnarray}

 \begin{eqnarray}
        \text{codim}(\pi^{-1}(\Sigma(F_{b_j})))&=&\dim(\Sigma_{a;b})-\dim(\Sigma(F_{b_j}))-\dim(\pi^{-1}(\Lambda))\nonumber\\
        &=&2a_{\tau_0}-2b_j-1-\dim(\pi^{-1}(\Lambda))\nonumber\\
        &=&a_{\tau_0}-b_j\geq 2\nonumber
        \end{eqnarray}
        
\textbf{(I-1-2)} Assume $b_j<a_s$, $b_j\neq a_i$ for all $i$, and $b_j=b_{j-1}+1$. Then the defining sequence deforms to one of smaller dimension, and hence the dimension of $\Sigma(F_{b_j})$ drops. On the other hand, the dimension of the general fiber does not change. We conclude that
$$\text{codim}(\pi^{-1}(\Sigma(F_{b_j})))\geq 2$$

 {\bf (I-2-1)} Assume $b_{j}=a_{\tau_0}$ for some $\tau_0<s$. Let $i_0$ be the smallest number such that $a_{\tau_0}-a_{i_0}=\tau_0-i_0$. Further assume that $b_{j'}\neq a_{i_0}-2$ for all $1\leq j'\leq k-s$. Note that the variety $\Sigma(F_{b_j})$ exists only if $a_{\tau_0}>\tau_0$. Let $F_{a_{i_0}-1}$ be an isotropic subspace of dimension $a_{i_0}-1$ contained in $F_{a_{i_0}}$ and containing both $F_{a_{i_0-1}}$ and $F_{b_{j-1}}$. Then $\Sigma(F_{b_j})$ is the Schubert variety whose defining sequence is obtained by replacing $F_{a_{\tau_0+1}}$ with $F_{a_{i_0}-1}$ and replacing $F_{b_j}^\perp$ with $F_{a_{\tau_0+1}}^\perp$. Over a general point $\Lambda$, the spaces $\Lambda_\tau$ are uniquely determined unless $\tau=\tau_0$, and the spaces $W^{\tau,t}$ are uniquely determined unless $\tau=\tau_0$ or $t=j$. The constraints on $\Lambda_{\tau_0}$ and $W^{\tau_0+1,j}$ are
$$\Lambda_{\tau_0-1}\subset \Lambda_{\tau_0}\subset \Lambda\cap F_{a_{\tau_0}}$$
and
$$F_{a_{\tau_0}}\subset W^{\tau_0+1,j}\subset F_{a_{\tau_0+1}}.$$
Once $\Lambda_{\tau_0}$ and $W^{\tau_0+1,j}$ are chosen, the remaining spaces $W^{\tau_0,t}$ and $W^{\tau,t}$ are uniquely determined. Thus the fiber dimension over $\Lambda$ is
$$\dim(\pi^{-1}(\Lambda))=a_{\tau_0+1}-a_{\tau_0}.$$
Therefore  \begin{eqnarray}
        \text{codim}(\pi^{-1}(\Sigma(F_{b_j})))&=&\dim(\Sigma_{a;b})-\dim(\Sigma(F_{b_j}))-\dim(\pi^{-1}(\Lambda))\nonumber\\
        &=&\tau_0-i_0+2+2a_{\tau_0+1}-2a_{\tau_0}-\dim(\pi^{-1}(\Lambda))\nonumber\\
        &=&\tau_0-i_0+2+a_{\tau_0+1}-a_{\tau_0}>1\nonumber
        \end{eqnarray}
        
\textbf{(I-2-2)} Assume $b_j=a_{\tau_0}$ for some $\tau_0<s$ and $b_{j-1}=a_{i_0}-2$ for some $i_0\leq \tau_0$ such that $a_{\tau_0}-a_{i_0}=\tau_0-i_0$. Then the defining sequence deforms to one of smaller dimension, and hence the dimension of $\Sigma(F_{b_j})$ drops. On the other hand, the dimension of the general fiber does not change. We conclude that
$$\text{codim}(\pi^{-1}(\Sigma(F_{b_j})))>1.$$

Next consider the subvarieties of type (II). If $a_i=i$ or $i=k$, then $\Sigma(F_{a_i})$ is empty. From now on assume $i<k$ and $a_i>i$. Note that if $a_i-a_{i'}=i-i'$, then $\Sigma(F_{a_i})=\Sigma(F_{a_{i'}})$. Therefore we may assume that $a_i$ is essential, that is, $a_i\neq a_{i+1}-1$. Let $i_0\leq i$ be the smallest integer such that $a_i-a_{i_0}=i-i_0$. 

\textbf{(II-1)} Assume $i<s$ and $a_i-b_j\neq 2$ for all $1\leq j\leq k-s$. If there exists $1\leq j\leq k-s$ such that $a_i\leq b_j<a_{i+1}$, then $\Sigma(F_{a_i})\subset \Sigma(F_{b_j})$. Now assume that no such $j$ exists. Then $\Sigma(F_{a_i})$ is the Schubert variety whose defining sequence is obtained by replacing $F_{a_{i+1}}$ with a subspace $F_{a_{i_0}-1}$ of dimension $a_{i_0}-1$ contained in $F_{a_{i_0}}$. Note that
    $$\dim(\Sigma_{a;b})-\dim(\Sigma({F_{a_i}}))=a_{i+1}-(a_{i_0}-1).$$ 
    Over a general point $\Lambda\in\Sigma(F_{a_i})$, the spaces $W^{\tau,t}$ are uniquely determined by $W^{\tau,t}=\text{span}(\Lambda_\tau,F_{b_t})$ for all $(\tau,t)$, and the spaces $\Lambda_\tau$ are uniquely determined by $\Lambda_\tau=\Lambda\cap F_{a_\tau}$ or $\Lambda_\tau=\Lambda\cap F_{b_{k-\tau+1}}^\perp$ for $1\leq \tau\leq i_0-1$ or $\tau\geq i+1$. For $\tau=i$, the space $\Lambda_i$ can be any subspace of dimension $i$ such that $\Lambda_{{i_0-1}}\subset\Lambda_i\subset \Lambda_{i+1}$. For a fixed general $\Lambda_i$, $\Lambda_{\tau}$ are uniquely determined by $\Lambda_{\tau}=\Lambda_i\cap F_{a_\tau}$ for $i_0\leq \tau\leq i-1$. Thus
        $$\dim(\pi^{-1}(\Lambda))=\dim(G(i-i_0+1,i-i_0+2))=i-i_0+1$$
        and
        \begin{eqnarray}
        \text{codim}(\pi^{-1}(\Sigma(F_{a_i})))&=&\dim(\Sigma_{a;b})-\dim(\Sigma(F_{a_i}))-\dim(\pi^{-1}(\Lambda))\nonumber\\
        &=&a_{i+1}-a_{i_0}-(i-i_0)\nonumber\\
        &>&a_i+1-a_{i_0}-(i-i_0)=1\nonumber
        \end{eqnarray}
        
 \textbf{(II-2)} Assume $i=s<k$ and $a_{i_0}-b_j\neq 2$ for all $1\leq j\leq k-s$. If $a_s\leq b_{k-s}$, then $\Sigma(F_{a_s})\subset \Sigma(F_{b_{k-s}})$. Now assume that $a_s>b_{k-s}$. The defining sequence of $\Sigma(F_{a_s})$ is obtained by replacing $F_{b_{k-s}}^\perp$ with a subspace $F_{a_{i_0}-1}$ of dimension $a_{i_0}-1$ contained in $F_{a_{i_0}}$. Then
    $$\dim(\Sigma_{a;b})-\dim(\Sigma_{F_{a_s}})=n-b_{k-s}+x_{k-s}-a_{i_0}-s+1.$$
    Over a general point $\Lambda\in \Sigma(F_{a_s})$, an argument similar to that in \textbf{(II-1)} shows that the fiber has dimension $s-i_0+1$. Thus
        \begin{eqnarray}
        \text{codim}(\pi^{-1}(\Sigma(F_{a_i})))&=&\dim(\Sigma_{a;b})-\dim(\Sigma(F_{a_i}))-\dim(\pi^{-1}(\Lambda))\nonumber\\
        &=&n-b_{k-s}+x_{k-s}-a_{i_0}-s+1-(s-i_0+1)\nonumber\\
        &=&n-b_{k-s}+x_{k-s}-2s-(a_s-s)\nonumber\\
        &\geq&n-b_{k-s}-a_s-s+s+1-\frac{n-2b_{k-s}}{2}\nonumber\\
        &=&\frac{n}{2}-a_s+1\geq1\nonumber
        \end{eqnarray}
        The equality holds if and only if $a_s=\frac{n}{2}$ and $x_{k-s}= s+1-\frac{n-2b_{k-s}}{2}$. 

{\bf (III)} Assume $j\geq 2$. Let $j_0\geq j$ be the largest integer such that $b_{j_0}-b_j=j_0-j$. The subvariety $\Sigma(F_{b_j}^\perp)$ is the Schubert variety whose defining sequence is obtained by replacing $F_{b_{j-1}}^\perp$ with $F_{b_{j_0}+1}^\perp$. Then
    $$\dim(\Sigma_{a;b})-\dim(\Sigma(F^\perp_{b_j}))=j_0+b_{j}-b_{j-1}+x_{j-1}-x_j.$$ 
    Over a general point $\Lambda\in \Sigma(F_{b_j}^\perp)$, the spaces $\Lambda_\tau$ are uniquely determined unless $\tau=k-j+1$, and the spaces $W^{\tau,t}$ are all uniquely determined. The linear space $\Lambda_{k-j+1}$ can be any subspace of dimension $k-j+1$ such that $\Lambda_{{k-j_0}}\subset\Lambda_{k-j+1}\subset \Lambda_{k-j+2}$. Thus
        $$\dim(\pi^{-1}(\Lambda))=\dim(G(j_0-j+1,j_0-j+2))=j_0-j+1$$
        and
        \begin{eqnarray}
        \text{codim}(\pi^{-1}(\Sigma(F^\perp_{b_j})))&=&\dim(\Sigma_{a;b})-\dim(\Sigma(F^\perp_{b_j}))-\dim(\pi^{-1}(\Lambda))\nonumber\\
        &=&b_{j}-b_{j-1}+x_{j-1}-x_j+j-1>1\nonumber\nonumber
        \end{eqnarray}
\end{proof}

The following cases remain inconclusive:
\begin{itemize}
    \item $\Sigma(F_{b_j})$ when $b_j\geq a_s$;
    \item $\Sigma(F_{a_s})$ when $a_s=\frac{n}{2}$ and $x_{k-s}= s+1-\frac{n-2b_{k-s}}{2}$;
\end{itemize}

For these remaining cases, we compute the dimension of the tangent space at each point. Let $\Lambda\in \Sigma_{a;b}$, and let $\gamma(t)$ be a smooth path in $\Sigma_{a;b}$ with $\gamma(0)=\Lambda$. Choose a basis $e_1,\ldots,e_k$ of $\Lambda$ such that $e_\tau\in F_{a_\tau}$ for $1\leq \tau\leq s$ and $e_\tau\in F_{b_{k-\tau+1}}^\perp$ for $s+1\leq \tau\leq k$. Identify $\Lambda$ with $e_1\wedge e_2...\wedge e_k$, and identify $\gamma(t)$ with $e_1(t)\wedge...\wedge e_k(t)$, where each $e_\tau(t)$ is a smooth path in $F_{a_\tau}$ or $F_{b_{k-\tau+1}}^\perp$, respectively. Then the tangent vector $\gamma'(0)$ can be identified with
$$e_1'(t)\wedge e_2(t)\wedge...\wedge e_k(t)+...+e_1(t)\wedge...\wedge e_k'(t)\ \ \  (\text{mod}\  \Lambda)$$
Notice that for $1\leq \tau\leq s$, the vector $e_\tau'(0)$ belongs to $T_{e_\tau}F_{a_\tau}/\Lambda_\tau$, while for $s+1\leq \tau\leq k$, the vector $e_\tau'(0)$ belongs to $T_{e_\tau}(Q\cap F_{b_{k-\tau+1}}^\perp)\cap \Lambda_\tau^\perp/\Lambda_\tau$. We have
$$\dim(T_{e_\mu}F_{a_\mu}/\Lambda_\mu)=a_\mu-\mu,$$
and
$$\dim(T_{e_\tau}(Q\cap F_{b_{k-\tau+1}}^\perp)\cap \Lambda_\tau^\perp/\Lambda_\tau)=n-2\tau+x_{k-\tau+1}-\dim(\Lambda_\tau\cap F_{b_{k-\tau+1}}).$$
We now consider the remaining cases.
\begin{itemize}
    \item $\Sigma(F_{b_j})$ when $b_j\geq a_s$. In this case, the only term that changes is $$\dim(T_{e_{k-j+1}}(Q\cap F_{b_j}^\perp)\cap \Lambda_{k-j+1}^\perp/\Lambda_{k-j+1}).$$
    Its dimension increases by $1$, since $\Lambda_{k-j+1}$ intersects $F_{b_j}$ in one extra dimension. Thus the tangent space is larger than expected, and therefore $\Sigma(F_{b_j})$ is contained in the singular locus.
\item $\Sigma(F_{a_s})$ when $a_s=\frac{n}{2}$ and $x_{k-s}= s+1-\frac{n-2b_{k-s}}{2}$; In this case, $e_{s+1}$ now belongs to $F_{a_s}$. Thus the difference is
\begin{eqnarray}
    n-b_{k-s}-2(s+1)+x_{k-s}-(a_{k-s}-(s+1))&=&0\nonumber
\end{eqnarray}
Therefore $\Sigma(F_{a_s})$ is contained in the smooth locus.
\end{itemize}

Thus we obtain the following description of singular locus of Schubert varieties.
\begin{Cor}
The singular locus of $\Sigma_{a;b}$ is the union of the following subvarieties:
\begin{itemize}
    \item $\Sigma(F_{b_j})$ for all $1\leq j\leq k-s$;
    \item $\Sigma(F_{a_i})$ unless $i=s$ and $a_s=\frac{n}{2}$ and $x_{k-s}= s+1-\frac{n-2b_{k-s}}{2}$;
    \item $\Sigma(F_{b_j}^\perp)$ for all $2\leq j\leq k-s$.
\end{itemize}
\end{Cor}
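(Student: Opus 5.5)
The plan is to assemble the Corollary from the preceding Theorem on components of the exceptional locus and the tangent-space computation for the two inconclusive families. There are two inclusions to prove.

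\emph{Every point of each listed subvariety is singular.} Take an irreducible component $C$ of the exceptional locus $E$ of $\pi:\Gamma_{a;b}(F_\bullet)\to\Sigma_{a;b}(F_\bullet)$ with $\pi(C)$ of type (I) with $b_j<a_s$, of type (II) outside the exceptional triple $(i=s,\ a_s=\frac n2,\ x_{k-s}=s+1-\frac{n-2b_{k-s}}{2})$, or of type (III) with $j\ge 2$. The preceding Theorem shows that $C$ has codimension at least $2$. Lemma \ref{component} then gives $\pi(C)\subset\operatorname{Sing}\Sigma_{a;b}$. For type (I) with $b_j\ge a_s$, use the tangent-space computation: along $\Sigma(F_{b_j})$ exactly one summand, the one attached to $e_{k-j+1}$, grows by one. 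Hence the Zariski tangent space has dimension $\dim\Sigma_{a;b}+1$, and these points are singular. Since the singular locus is closed, it contains the Zariski closures $\Sigma(F_{b_j})$, $\Sigma(F_{a_i})$ and $\Sigma(F_{b_j}^\perp)$.

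\emph{The singular locus is contained in the listed union.} Since $\pi$ is a resolution that is an isomorphism over $\Sigma^\circ_{a;b}$ (Theorem \ref{smoothness}), any point outside $\pi(E)$ is smooth. So it suffices to show that $\pi(E)$ is the union of the type (I), (II) and (III) loci. This uses the remark preceding the Theorem: a fiber of $\pi$ is positive-dimensional exactly when some $\Lambda_\tau$ or $W^{\tau,t}$ fails to be determined by $\Lambda\cap F_{a_\tau}$, $\Lambda\cap F^\perp_{b_{k-\tau+1}}$ or $\operatorname{span}(\Lambda_\tau,F_{b_t})$. That happens only when one of the incidence dimensions jumps by one, which is what defines (I), (II) and (III).

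It then remains to discard the pieces of $\pi(E)$ that are not in the list:
\begin{itemize}
\item the type (III) locus with $j=1$, which is empty or equal to a smaller type already listed, since $F^\perp_{b_1}$ is the ambient bound;
\item the exceptional type (II) locus $\Sigma(F_{a_s})$, where the tangent computation gives difference $0$, so the generic point is smooth.
\end{itemize}

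\textbf{Main obstacle.} The smoothness claim gives only the generic point of $\Sigma(F_{a_s})$. The points of it that are singular must be shown to lie in the other listed loci. For this, observe that the tangent-space formula depends only on the incidence dimensions $\dim(\Lambda\cap F_{a_\tau})$ and $\dim(\Lambda\cap F_{b_j})$. Away from the generic stratum, one of the other incidences jumps, which places the point in some $\Sigma(F_{b_j})$, some other $\Sigma(F_{a_i})$, or some $\Sigma(F^\perp_{b_j})$ with $j\ge 2$. I would verify this by a stratification argument over the $B$-orbits contained in $\Sigma(F_{a_s})$.
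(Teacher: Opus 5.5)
Your route is the paper's own. You apply Lemma \ref{component} to the components that the preceding Theorem shows have codimension at least $2$, use the tangent-space count for the two leftover families, and use $\operatorname{Sing}\Sigma_{a;b}\subset\pi(E)$ for the reverse inclusion. You are also right that the paper's ``contained in the smooth locus'' can at most mean the generic point of $\Sigma(F_{a_s})$. The genuine gap, however, lies upstream of your ``main obstacle'', in two inputs you import without checking them.
\begin{enumerate}
\item ``Difference $0$, so the generic point of $\Sigma(F_{a_s})$ is smooth'' is not a valid inference. The count only exhibits tangent vectors to curves $e_1(t)\wedge\cdots\wedge e_k(t)$ with each $e_\tau(t)$ confined to a fixed flag member, and it adds dimensions rather than computing a span. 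So at best it bounds $\dim T_\Lambda\Sigma_{a;b}$ from below, whereas smoothness needs an upper bound.
\item The codimension bound for type (I) with $b_j<a_s$ rests on the fiber count in (I-1-1). That count lets $W^{\tau_0,j}$ vary freely between $\operatorname{span}(\Lambda_{\tau_0},F_{b_j})$ and $F_{a_{\tau_0}}$, and ignores that the lower rows of the table must stay isotropic.
\end{enumerate}

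Both defects are real, and the statement as written is false. Take $(a;b)=(3;1)$ in $SG(2,6)$, with $\omega(e_i,e_{7-i})=1$ for $i\le 3$, all other pairings of basis vectors zero, and $F_i=\langle e_1,\dots,e_i\rangle$. Then $F_1^\perp=\langle e_1,\dots,e_5\rangle$ and $\Sigma_{3;1}=\{\Lambda:\Lambda\cap F_3\neq 0,\ \Lambda\subset F_1^\perp\}$. Here $a_1=3=\frac n2$ and $x_1=0=s+1-\frac{n-2b_1}{2}$, so this is the exceptional case, and the statement predicts $\operatorname{Sing}\Sigma_{3;1}=\Sigma(F_1)=\{F_1\subset\Lambda\subset F_1^\perp\}$.
\begin{itemize}
\item Near $\langle e_2,e_3\rangle$, the generic point of $\Sigma(F_3)=G(2,F_3)$, write $\Lambda=\langle e_2+\alpha_1e_1+\alpha_4e_4+\alpha_5e_5,\ e_3+\beta_1e_1+\beta_4e_4+\beta_5e_5\rangle$; there are no $e_6$-terms since $\Lambda\subset F_1^\perp$. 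Isotropy reads $\beta_5=\alpha_4$, and $\Lambda\cap F_3\neq 0$ becomes $\alpha_4^2=\alpha_5\beta_4$. This is a quadric cone, so these points are singular.
\item Near $\langle e_1,e_5\rangle$, the generic point of $\Sigma(F_1)$, write $\Lambda=\langle e_1+a_2e_2+a_3e_3+a_4e_4,\ e_5+c_2e_2+c_3e_3+c_4e_4\rangle$. Isotropy reads $a_2=a_4c_3-a_3c_4$, and $\Lambda\cap F_3\neq 0$ becomes $a_4=0$, so these points are smooth.
\item Consistently, $\pi^{-1}(\langle e_1,e_5\rangle)$ is a single point. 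Isotropy of $W^{2,1}\supseteq W^{1,1}+\Lambda$ forces $W^{1,1}=F_3\cap\Lambda^\perp=\langle e_1,e_3\rangle$, contrary to the free $\mathbb{P}^1$ in (I-1-1).
\end{itemize}
So the singular locus is carried by the excluded $\Sigma(F_{a_s})$ and not by $\Sigma(F_{b_1})$, exactly the opposite of the statement. Likewise, for $(2;0)$ in $SG(2,4)$ the list is empty. Yet $\Sigma_{2;0}=\{\Lambda\cap F_2\neq 0\}$ is a tangent hyperplane section of $LG(2,4)\cong Q^3$, a quadric cone singular at $F_2$.

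Hence no stratification argument can close your main obstacle. The exceptional clause and the unconditional type (I) bullet must be corrected first. Each leftover locus then has to be decided by an honest computation of the Zariski tangent space, not by the paper's count.
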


\begin{Cor}
The smooth Schubert varieties in $SG(k,n)$ are sub-Grassmannians or sub-symplectic Grassmannians.    
\end{Cor}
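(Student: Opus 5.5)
The statement to prove is the final corollary: a smooth Schubert variety $\Sigma_{a;b}$ in $SG(k,n)$ is a sub-Grassmannian or a sub-symplectic Grassmannian. I will deduce this from the preceding corollary describing the singular locus of $\Sigma_{a;b}$.

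Suppose $\Sigma_{a;b}$ is smooth. Then every subvariety in that list must be empty, and the task is to read off combinatorial constraints on $(a;b)$ from the emptiness of each one.

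\textbf{Step 1: the $b$-constraints.} First use that $\Sigma(F_{b_j})$ is empty for all $1\leq j\leq k-s$. This should force $F_{b_j}\subset\Lambda$ for every $\Lambda$ in the variety: the generic intersection dimension $x_j=\#\{i\mid a_i\le b_j\}$ cannot be raised. Concretely I expect this to require $b_j=0$ for all $j$, or else $b_j$ lying in an initial run where $a_i=i$ for $i\le b_j$. Checking this needs the explicit description of the Schubert cells of $\Sigma(F_{b_j})$ given in cases (I-1-1)--(I-2-2).

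\textbf{Step 2: the $F^\perp$-constraints.} Next use that $\Sigma(F^\perp_{b_j})$ is empty for $j\ge 2$. This should force $b_2,\dots,b_{k-s}$ to be consecutive with nothing allowed to jump, i.e.\ $b_{k-s}=b_1+(k-s-1)$ and the block is tight.

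\textbf{Step 3: the $a$-constraints.} Use that $\Sigma(F_{a_i})$ is empty apart from the exceptional case. By the discussion before case (II-1), emptiness of $\Sigma(F_{a_i})$ happens exactly when $a_i=i$ or $i=k$. So every $a_i$ with $i<s$ (and $i=s$ when $s<k$) must be an initial segment, with the exception $a_s=n/2$ together with the equality $x_{k-s}=s+1-\frac{n-2b_{k-s}}{2}$.

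\textbf{Step 4: assembling.} Combining these constraints leaves two shapes.
\begin{enumerate}
\item If $s=k$: $a=(1,\dots,k-1,a_k)$ with $\Lambda\subset F_{a_k}$ containing $F_{k-1}$. Since $F_{a_k}$ is isotropic, this is a projective space, i.e.\ a sub-Grassmannian $G(1,F_{a_k}/F_{k-1})$.
\item Otherwise: $a=(1,\dots,s)$ (possibly with the exceptional $a_s=n/2$) and $b$ a tight block. The variety is then $\{F_{s'}\subset\Lambda\subset F_{s'}^\perp\}\cong SG(k-s',n-2s')$, or in the exceptional case a sub-Grassmannian of a Lagrangian.
\end{enumerate}
In each case I would check the identification directly from the defining incidence conditions.

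The main obstacle is Step 1 and the exceptional case. Translating "the closure $\Sigma(F_{b_j})$ is empty" into a clean condition on indices requires care, because the paper defines these loci only as Zariski closures of rank-jump loci. I would first settle the small cases $k=1,2$ by hand, using the example $\Sigma_{;2}$ as a guide, and then argue inductively via the projection $\Lambda\mapsto\Lambda\cap F_{a_1}$.
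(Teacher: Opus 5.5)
You follow the route the paper intends (it gives no separate argument; the statement is meant to be read off from the preceding singular-locus corollary), but that reading-off breaks at concrete points.

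\textbf{Step 2 fails.} Every Schubert variety contains the $B$-fixed point $F_k$. Moreover $F_k\subset F_b^\perp$ for every admissible $b$, since both lie in the Lagrangian $F_{n/2}$. So whenever $k-s\ge 2$ you get $\dim(F_k\cap F_{b_2}^\perp)=k=k-2+2$, i.e.\ $F_k\in\Sigma(F_{b_2}^\perp)\neq\emptyset$, however tight the block $b$ is. Read literally, the corollary therefore forces $k-s\le 1$; your own Step 1 together with $a_i\neq b_j+1$ gives the same. It also declares $SG(k,n)$ itself ($s=0$, $b=(0,1,\dots,k-1)$) singular along its Schubert divisor $\Sigma(F_1^\perp)$. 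So your opening inference ``smooth $\Rightarrow$ every listed locus is empty'' is false. Your case (2) with $SG(k-s',n-2s')$, $k-s'\ge 2$, does not come out of Steps 1--3; it is imported from the answer you expect.

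\textbf{The exceptional case fails the other way.} Take $\Sigma_{2;0}\subset LG(2,4)$, so $s=1$, $a_1=2=\frac n2$, $b_1=0$, and $x_1=0=s+1-\frac{n-2b_1}{2}$. Every locus in the list is empty. Yet $\Sigma_{2;0}=\{\Lambda:\Lambda\cap F_2\neq 0\}$ is the tangent hyperplane section of $LG(2,4)\cong Q^3$ at $F_2$, i.e.\ a quadric cone. It is not a sub-Grassmannian, contrary to your Step 4. It is also singular at $F_2$, so the exceptional clause of the corollary is itself wrong.

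\textbf{Under your reading the statement is false.} If ``sub-symplectic Grassmannian'' means $\cong SG(k-s',n-2s')$, no bookkeeping can rescue the argument. For $2\le k\le \frac n2-1$ take the odd symplectic Grassmannian $\Sigma=\{\Lambda\in SG(k,n):\Lambda\subset F_1^\perp\}$, with index $s=0$, $b=(1,2,\dots,k)$.
\begin{itemize}
\item With $H=F_1^\perp$, $\Sigma$ is the zero locus in $G(k,H)$ of the section of $\wedge^2S^*$ given by $\omega|_H$, whose kernel is the line $F_1$.
\item Since $\dim(\Lambda\cap F_1)\le 1$, the map $H\to\Lambda^*$, $h\mapsto\omega(h,\cdot)|_\Lambda$, has image $(\Lambda/\Lambda\cap F_1)^*$.
\item Hence the differential $\mathrm{Hom}(\Lambda,H/\Lambda)\to\wedge^2\Lambda^*$ is still surjective, and $\Sigma$ is smooth.
\end{itemize}
For $k=2$, $n=6$ it is a smooth hyperplane section of $G(2,5)$, a Fano $5$-fold of index $4$, hence neither a Grassmannian nor a symplectic Grassmannian. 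The corollary nevertheless puts $\Sigma(F_{b_1})=\{\Lambda\supset F_1\}$ in its singular locus. Compare the paper's own example, which calls the linear space $\Sigma_{;2}=\mathbb{P}(F_2^\perp)\subset SG(1,n)$ singular along $F_2$.

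\textbf{What a correct proof needs.} You need an independent smoothness test. Since the singular locus is closed and $B$-stable, it suffices to compare $\dim T_{F_k}\Sigma_{a;b}$ with $\dim \Sigma_{a;b}$ at the $B$-fixed point. The resulting list must then include loci such as $\{F_m\subset\Lambda\subset F_{m+1}^\perp\}$. Alternatively, ``sub-symplectic Grassmannian'' must be read loosely as any locus $\{A\subset\Lambda\subset B\}$ of isotropic subspaces.
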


\bibliographystyle{plain}
\begin{thebibliography}{10}
\bibitem{AS}
Adali, Riza Seckin.
\newblock Singularities of Restriction Varieties in OG(k, n).
\newblock {\em Communications in Algebra}, 46(8): 3280-3313 (2018).

\bibitem{PLJC}
Paolo Aluffi, Leonardo C. Mihalcea, Jörg Schürmann, Changjian Su.
\newblock Shadows of characteristic cycles, Verma modules, and positivity of Chern–Schwartz–MacPherson classes of Schubert cells.
\newblock {\em Duke Math. J.}, 172(17): 3257-3320 (2023).

\bibitem{AM}
Paolo Aluffi and Leonardo Mihalcea.
\newblock Chern classes of Schubert cells and varieties.
\newblock {\em Journal of Algebraic Geometry.}, 18. 10.1090, (2006).

\bibitem{AM2}
Paolo Aluffi and Leonardo Mihalcea.
\newblock Chern–Schwartz–MacPherson classes for Schubert cells in flag manifolds.
\newblock {\em Compositio Mathematica.}, 152(12):2603-2625, (2016).

\bibitem{BL}
Billey, S and Lakshmibai V.
\newblock Singular loci of Schubert varieties
\newblock {\em Springer Science+Business Media}, (2000).

\bibitem{BP}
Sara Billey and Alexander Postnikov.
\newblock Smoothness of Schubert varieties via patterns in root subsystems.
\newblock {\em Advances in Applied Mathematics.}, 34(3):447-466, (2005).

\bibitem{Coskun2013SG}
Izzet Coskun.
\newblock Symplectic restriction varieties and geometric branching rules.
\newblock {\em Clay Mathematics Proceedings 18}: 205--239, (2013).

\bibitem{Flexibility}
Izzet Coskun, Colleen Robles,
\newblock Flexibility of Schubert classes.
\newblock {\em Differential Geometry and its Applications}: Volume 31, Issue 6, 759-774, (2013).

\bibitem{Coskun2014RigidityOS}
Izzet Coskun.
\newblock Rigidity of Schubert classes in orthogonal Grassmannians.
\newblock {\em Israel Journal of Mathematics }, 200:85--126, (2014).

\bibitem{LS}
Lakshmibai, V. and Sandhya, B. 
\newblock Criterion for smoothness of Schubert varieties in Sl(n)/B
\newblock {\em Proc. Indian Acad. Sci. (Math. Sci.)} 100, 45–52 (1990).

\bibitem{Huh}
June Huh.
\newblock Positivity of Chern classes of Schubert cells and varieties.
\newblock {\em Journal of Algebraic Geometry.}, 25(1):177–199, (2016).

\bibitem{YL3}
Liu, Y. and Sheshmani, A. and Yau, S-T.
\newblock Rigid Schubert classes in partial flag varieties.
\newblock arXiv:2401.11375

\bibitem{RSW}
Edward Richmond, William Slofstra, Alexander Woo.
\newblock The Nash blow-up of a cominuscule Schubert variety.
\newblock {\em Journal of Algebra} 559, 580–600 (2020).

\bibitem{JW}
James S. Wolper.
\newblock A combinatorial approach to the singularities of Schubert varieties.
\newblock {\em Advances in Mathematics} 76, 184–193 (1989).

\end {thebibliography}

\end{document}